\renewcommand*{\backref}[1]{}
\renewcommand*{\backrefalt}[4]{%
	\ifcase #1 (Not cited).%
	\or        (Cited on page~#2).%
	\else      (Cited on pages~#2).%
	\fi}
\newcommand{\iprod}{\mathbin{\lrcorner}}
\newcommand{\n}{\mathfrak{n}}
\renewcommand{\r}{\mathfrak{r}}
\renewcommand{\v}{\mathfrak{v}}
\def\zz{z}
\def\ee{e}
\newcommand{\Z}{\mathbb{Z}}
\newcommand{\R}{\mathbb{R}}
\newcommand{\C}{\mathbb{C}}
\newcommand{\w}{\wedge}
\newcommand{\G}{\mathrm{G}}
\newcommand{\f}{\varphi}
\renewcommand{\epsilon}{\varepsilon}
\newtheorem{thm}{Theorem}[section]
\newtheorem{cor}[thm]{Corollary}
\newtheorem{prop}[thm]{Proposition}
\newtheorem{lemma}[thm]{Lemma}
\theoremstyle{definition}
\newtheorem{defn}[thm]{Definition}
\newtheorem{ex}[thm]{Example}
\theoremstyle{remark}
\newtheorem{rmk}[thm]{Remark}
\numberwithin{equation}{section}
\begin{document}

\title[{The heterotic G$_2$-system on 2-step nilmanifolds endowed with principal torus bundles}]{The heterotic G$_\mathbf{2}$-system on 2-step nilmanifolds endowed with principal torus bundles}

\author{Andrei Moroianu}
\address{Université Paris-Saclay, CNRS,  Laboratoire de mathématiques d'Orsay, 91405, Orsay, France, 
and Institute of Mathematics “Simion Stoilow” of the Romanian Academy, 21 Calea Grivitei, 010702 Bucharest, Romania}
\email{andrei.moroianu@math.cnrs.fr}

\author{Alberto Raffero}
\address{Dipartimento di Matematica ``G. Peano'' \\ Universit\`a degli Studi di Torino\\ Via Carlo Alberto 10\\10123 Torino\\ Italy}
\email{alberto.raffero@unito.it}

\author{Luigi Vezzoni}
\address{Dipartimento di Matematica ``G. Peano'' \\ Universit\`a degli Studi di Torino\\ Via Carlo Alberto 10\\10123 Torino\\ Italy}
\email{luigi.vezzoni@unito.it}
\begin{abstract}

We study the geometric heterotic G$_2$-system on 7-dimensional 2-step nilmanifolds $M=\Gamma\backslash N$ endowed with principal torus bundles and with a prescribed flat tangent bundle instanton. 
We first prove that every invariant G$_2$-structure solving the system must be coclosed when the dimension of the commutator of $N$ is $1$ or $2$, 
and under an additional calibration assumption when the dimension is $3$.
Then, we discuss the existence of solutions for all possible isomorphism classes of 7-dimensional 2-step nilpotent Lie algebras, 
and we provide examples with constant dilaton function.

\end{abstract}
\subjclass[2020]{53C10, 53C30} 
\keywords{Heterotic system, $\G_2$-structure with torsion, nilpotent Lie algebra, torus bundle} 
\maketitle

\section{Introduction}
 In this paper, we study the heterotic $\G_2$-system \cite{CFT,dSGFLSE, dG, X,X1,DLS,X2, FIUVa,FIUV,fist, FI,FI2,GS,GMPW,GMPW2,REF1,LS,mac1,mac2,Nolle} on $2$-step nilmanifolds 
endowed with principal torus bundles. 

Consider a 7-dimensional manifold $M$ endowed with a $\G_2$-structure $\f\in\Omega^3(M)$ and a $K$-principal bundle $P\to M$, 
where $K$ is a $k$-dimensional real Lie group. 
Let $\theta\in\Omega^1(P;\mathfrak{k})$ be a connection on $P$ with curvature $F_\theta\,\in \Omega^2(M,\mathrm{ad}(P))$, 
and let $\nabla$ be an affine connection on $M$ with curvature $R_\nabla\,\in \Omega^2(M,\mathrm{End}(TM))$.
 The {\em heterotic $\G_2$-system} is given by the following set of equations \cite{X,DLS}:
\begin{equation}\label{hetsysIntro0}
\begin{aligned}
d*\f&=4\tau_1\wedge *\f, \\
F_{\theta}\wedge *\f&=0,\\
R_{\nabla}\wedge *\f&=0,\\
dH_{\varphi}&=\frac{\alpha'}{4}\left(\mathrm{Tr}( F_{\theta}\wedge F_{\theta}) - {\rm Tr}(R_{\nabla}\wedge R_{\nabla})\right) \,,
\end{aligned}
\end{equation}
where $*$ is the Hodge operator determined by the $\G_2$-structure $\f$, 
$\tau_1 \coloneqq  \tfrac1{12}*(\f\w* d\f) \in\Omega^1(M)$  is the intrinsic torsion $1$-form of $\f$,
$H_\f$ is the following 3-form
\[
H_{\varphi}\coloneqq   \frac{1}{6} {*(\varphi\w d\varphi)}  \varphi - * d\varphi + *(4\tau_1 \wedge \varphi),
\]
and $\alpha'$ is a positive constant. 
Note that the trace of $F_\theta\wedge F_\theta$ is computed using a non-degenerate $\mathrm{Ad}(K)$-invariant bilinear form $\langle\cdot,\cdot\rangle_{\mathfrak{k}}$ on the Lie algebra $\mathfrak{k}$ of $K$. 

\smallskip

In the mathematical literature on the heterotic $\G_2$-system (see e.g.~\cite{dSGFLSE, X,DLS}), the affine connection $\nabla$ entering \eqref{hetsysIntro0} is regarded as part of the geometric data of the system and is required to satisfy 
the $\G_2$-instanton condition. 
From the perspective of heterotic supergravity, the choice of tangent-bundle connection is part of the formulation of the effective theory.  
In the present work we will adopt the geometric viewpoint and study solutions of \eqref{hetsysIntro0} for a prescribed instanton connection $\nabla$.

\smallskip
The heterotic $\G_2$-system arises in the study of $\mathcal{N} = 1$ supersymmetric compactifications of the heterotic string to three dimensions in the supergravity limit, 
corresponding to a leading-order approximation in the  $\alpha'$-expansion.  The $\mathcal{N} = 1$ supersymmetry imposes that the dilaton field $\phi$ is a potential for the intrinsic torsion 
1-form of the $\G_2$-structure, $\tau_1=\tfrac12d\phi$, 
 and that the torsion form $\tau_0 \coloneqq \tfrac17*(\f\wedge d\f)\in C^\infty(M)$ is constant.  
Moreover, $-\tau_0^2$ is a non-zero multiple of the {\em cosmological constant} 
of the $3$-dimensional spacetime, which is constrained to be Minkowski (if $\tau_0=0$) 
or anti-de Sitter (if $\tau_0\neq0$) by supersymmetry and maximal symmetry \cite{DLS,X}.  
The system \eqref{hetsysIntro0} admits an alternative description in terms of Killing spinor equations, see e.g.~\cite{CFT,dSGFLSE,X}.  
The complete bosonic equations of heterotic supergravity were presented in \cite{MMS22} and \cite{MMS24}, where the first compactification backgrounds not locally isomorphic to a supersymmetric compactification background were obtained.

\smallskip

The first two equations of the heterotic $\G_2$-system \eqref{hetsysIntro0} are equivalent to the {\em gravitino equation} \cite[Lemma 2.12]{dSGFLSE}. 
The first one states that one of the components of the intrinsic torsion of $\varphi$, namely the intrinsic torsion 2-form $\tau_2$, is zero. 
This condition characterizes the existence of a connection on $TM$ that preserves the 
$\G_2$-structure and has totally skew-symmetric torsion \cite[Thm.~4.7]{FI}; the latter corresponds to the $3$-form $H_\f$ under the musical isomorphism.   
$\G_2$-structures satisfying the condition $\tau_2=0$ are known as {\em integrable $\rm{G}_2$-structures} \cite{FI,FI2}
or {\em $\rm{G}_2$-structures with torsion} (shortly {\em $\G_2T$-structures}) \cite{ChSw} in the literature. 
The  second and third equations in \eqref{hetsysIntro0} state that the connections $\theta$ and $\nabla$ are ${\rm G}_2$-instantons \cite{DT}.
The last equation is known as the {\em heterotic Bianchi identity}. 

\medskip 

Following the recent work \cite{dSGFLSE}, the system \eqref{hetsysIntro0} can be seen as a special instance of a more general one, defined as follows for a $7$-manifold $M$ endowed with a $\G_2$-structure $\varphi$ and a 
$\hat K$-principal bundle $\hat P\to M$:
\begin{equation}\label{hetsysIntro2}
\begin{aligned}
d*\f&=4\tau_1\wedge *\f, \\
F_{\hat\theta}\wedge *\f&=0,\\
dH_{\varphi}&=\langle F_{\hat\theta}\wedge F_{\hat\theta}\rangle_{\hat{\mathfrak{k}}}\,,
\end{aligned}
\end{equation} 
where $\hat\theta\in\Omega^1(\hat P;\hat{\mathfrak{k}})$ is a connection on $\hat P$ with curvature $F_{\hat\theta}$, 
and $\langle \cdot,\cdot\rangle_{\hat{\mathfrak{k}}}$ is an $\mathrm{Ad}(\hat K)$-invariant non-degenerate symmetric bilinear form  
on the Lie algebra $\hat{\mathfrak{k}}$ of $\hat K$. 
Indeed, every solution of the heterotic $\G_2$-system \eqref{hetsysIntro0} determines a solution of \eqref{hetsysIntro2} by choosing the Lie group $\hat K \coloneqq K\times \mathrm{GL}(7,\R)$, the principal $\hat K$-bundle $\hat P \coloneqq P \times_M F(M)$, where $F(M)$ is the frame bundle of $M$, the connection form
$\hat\theta \coloneqq (\theta,\theta_\nabla)$, where $\theta_\nabla\in\Omega^1(F(M),\mathfrak{gl}(7,\R))$ is the connection form on $F(M)$ 
corresponding to the covariant derivative $\nabla$ on the associated vector bundle $TM$, \
and $\langle\cdot,\cdot\rangle_{\hat{\mathfrak{k}}} =\frac{\alpha'}{4}\left( \langle\cdot,\cdot\rangle_{\mathfrak{k}}\oplus \langle\cdot,\cdot\rangle_{\mathfrak{gl}(7,\R)} \right)$, with $\langle A,B\rangle_{\mathfrak{gl}(7,\R)}:=-\mathrm{Tr}(AB)$.

By \cite[Thm.~4.9]{dSGFLSE}, for every solution of \eqref{hetsysIntro2} on a compact 7-manifold $M$, 
the function $\lambda \coloneqq \tfrac{7}{12}\tau_0$ is constant. 
This holds, in particular, for solutions of \eqref{hetsysIntro0}.

Special solutions to system \eqref{hetsysIntro2} are provided by {\em torsion-free} ${\rm G}_2$-structures, 
which are characterized by the conditions $d\f=0$ and $d*\f=0$. 
In this case, $H_{\varphi}$ vanishes and any flat bundle gives a solution to the system. 
More generally, ${\rm G}_2T$-structures satisfying the condition
$dH_{\varphi}=0$ 
provide solutions to the system when the connection $\theta$ is flat.     
These $\G_2$-structures are known as {\em strong} ${\rm G}_2$-structures with torsion \cite{ChSw,FMR},  
in analogy to the terminology used in the Hermitian case.  
In this context, they play the role that Calabi-Yau structures play in the Hull-Strominger system. 

\smallskip

In the present paper, we study the heterotic $\G_2$-system \eqref{hetsysIntro0} in the particular case where $M=\Gamma\backslash N$ is a $2$-step nilmanifold, 
$N$ being a simply connected $2$-step nilpotent 7-dimensional Lie group and $\Gamma$ a cocompact lattice, 
the Lie group $K=\mathbb{T}^k$ is a $k$-dimensional torus, and the affine connection $\nabla$ is flat.  
The choice of a flat tangent bundle instanton is motivated by the parallelizability of nilmanifolds and allows for an explicit analysis of the system. 
We emphasize, however, that in this paper we do not investigate the relation between this flat connection and the Hull connection appearing in the standard first-order formulation of heterotic supergravity. 
Consequently, while our constructions provide solutions of the heterotic $\G_2$-system \eqref{hetsysIntro0} in the above geometric sense, we do not claim that they determine heterotic compactification backgrounds in a specific supergravity scheme.

Under these assumptions and after absorbing the $\frac{\alpha'}{4}$ factor in the scalar product $\langle\cdot,\cdot\rangle_{{\mathfrak{t}^k}}$, the heterotic $\G_2$-system \eqref{hetsysIntro0} becomes:
\begin{equation}\label{hetsysIntro}
\begin{aligned}
d*\f&=4\tau_1\wedge *\f, \\
F_{\theta}\wedge *\f&=0,\\
dH_{\varphi}&=\langle F_{\theta}\wedge F_{\theta}\rangle_{{\mathfrak{t}^k}}\,.
\end{aligned}
\end{equation}
In this geometric setting, we refer to an {\em invariant} solution to the system as a solution where $\varphi$ and the curvature form $F_{\theta}$ (which can be thought of as a 2-form on $M$ with values in $\mathfrak{t}^k\cong\R^k$) 
are induced by left-invariant forms on $N$.
In this way, the system can be regarded as a system for exterior forms on the Lie algebra $\n$ of $N,$ 
and the analysis can be performed in an algebraic fashion. 

Let $(k_+,k_{-})$, with $k_+ + k_- = k,$ be the signature of the  
non-degenerate symmetric bilinear form $\langle \cdot,\cdot\rangle_{\mathfrak{t}^k}$ on the Lie algebra of $\mathbb{T}^k$. 
Note that if a solution of $\eqref{hetsysIntro}$ exists for some signature $(k_+,k_{-})$, then one can obtain solutions for any signature $(k'_+,k'_{-})$ with $k'_+\ge k_+$ and $k'_-\ge k_-$ simply by multiplying $P$ with a trivial bundle with flat connection. We will thus be interested in constructing solutions with smallest possible $k_+$ and $k_-$. 
Note also that any solution must have $k_+\geq1$. This follows from \cite[Thm.~3.9]{dSGFLSE} and will be discussed in Section \ref{sec:hetG2}. 

\medskip
Nilpotent Lie algebras $\mathfrak n$ of dimension $7$ are classified \cite{Gon}. In the $2$-step case, the possible dimensions of the commutator $\mathfrak n'\coloneqq[\mathfrak n,\mathfrak n]$ of $\mathfrak n$ are 1, 2 or $3$. In  the case $\dim(\n')=3$ we focus on ${\rm G}_2$-structures $\varphi$ which calibrate  $\n'$, i.e., $\varphi$ restricts to a volume form on $\n'$. This is a technical assumption, already considered in \cite{DMR}, which allows us to have a global description of the $3$-form $\varphi$.  

\medskip 
As a preliminary result of independent interest, we prove the following:

\begin{prop}\label{coclosed}
Let $\varphi$ be an invariant ${\rm G}_2T$-structure  
on a $2$-step nilpotent Lie algebra $\n$. Then $\varphi$ is coclosed, i.e., $d*\f=0$, 
if either $\dim(\n')\leq2$, or $\dim(\n')=3$ and $\varphi$ calibrates $\n'$.    
\end{prop}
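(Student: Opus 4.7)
The strategy is to reformulate coclosedness via the $\G_2$-torsion identities and then exploit the bigrading of the Chevalley--Eilenberg complex of $\n$. Since $\varphi$ is $\G_2T$ (equivalently, $\tau_2 = 0$), we have $d*\varphi = 4\tau_1 \w *\varphi$. Because the map $\Omega^1 \to \Omega^5$, $\alpha \mapsto \alpha \w *\varphi$, is injective, the proposition is equivalent to proving that $\tau_1 = 0$.

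Write $\n^* = V \oplus W$, where $W$ is the annihilator of $\n'$ in $\n^*$ (equivalently, the space of closed $1$-forms) and $V = W^\perp$ with respect to the $\G_2$-metric, so $\dim V = k \coloneqq \dim\n'$. Because $\n$ is $2$-step nilpotent, $d|_W = 0$ and $d(V)\subset \Lambda^2 W$, so $d$ shifts the bidegree with respect to this splitting by $(-1,+2)$. Consequently $d*\varphi$ is constrained to bidegree $(0,5)$ when $k=1$, to $(0,5)$ and $(1,4)$ when $k=2$, and to $(1,4)$ and $(2,3)$ when $k=3$. On the other hand, decomposing $\tau_1 = \tau_1^V + \tau_1^W$ and $*\varphi$ by bidegree, the $5$-form $\tau_1 \w *\varphi$ a priori has components in further bidegrees; the identity $d*\varphi = 4\tau_1 \w *\varphi$ forces all such ``excess'' components of $\tau_1\w *\varphi$ to vanish, producing a system of linear equations on $\tau_1^V, \tau_1^W$ whose coefficients are determined by the bigraded parts of $*\varphi$.

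In the case $k=1$, the transverse $6$-dimensional $W$ inherits an $SU(3)$-structure $(\omega,\Psi)$ from $\varphi$, with $\varphi = \Re\,\Psi + e^7\w\omega$ and $*\varphi = \tfrac12\omega^2 - e^7\w\Im\,\Psi$. Setting $\tau_1^V = a\,e^7$, the vanishing of the bidegree $(1,4)$ component of $\tau_1\w *\varphi$ reads $a\cdot\tfrac12\omega^2 + \tau_1^W\w\Im\,\Psi = 0$ in $\Lambda^4 W$. Since $\tfrac12\omega^2 = *_W\omega$ lies in the trivial $SU(3)$-irrep $\Lambda^4_1$, while $\tau_1^W\w\Im\,\Psi = \pm *_W(\iota_{(\tau_1^W)^\sharp}\Re\,\Psi)$ lies in the $6$-dimensional irrep $\Lambda^4_6$, both components must vanish separately, yielding $a=0$ and then $\tau_1^W = 0$ by nondegeneracy of $\Re\,\Psi$. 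The cases $k=2$ and $k=3$ proceed by analogous representation-theoretic arguments on the induced transverse structure. I expect the main obstacle to be the case $k=3$, where two nontrivial bidegrees contribute to $d*\varphi$; the calibration hypothesis provides the missing rigidity, since it forces $\varphi_{3,0} = e^{567}$ and hence $(*\varphi)_{0,4} = e^{1234}$, without which the bidegree matching would lose enough structure to conclude.
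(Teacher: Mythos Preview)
Your approach is correct and runs parallel to the paper's, recasting its direct computations as bidegree constraints. For $k=1$ the paper likewise uses the induced $\mathrm{SU}(3)$-structure on $W$ and the identity $d\psi=4\tau_1\wedge\psi$: it first observes $\tau_1\in W$ via the explicit formula $12\tau_1=*(\varphi\wedge *d\varphi)$, then reads off $\tau_1\wedge\Omega_-=0$ from the $z^\flat$-component of the identity, whence $\tau_1=0$. Your bidegree-$(1,4)$ argument recovers both conclusions at once via the irreducible decomposition of $\Lambda^4W$, which is a slight streamlining.

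The one point where your sketch and the paper diverge is $k=2$. The paper does \emph{not} work with your splitting $V=(\n')^\flat$, $\dim V=2$; instead it treats $k=2$ and $k=3$ uniformly by passing to the $3$-dimensional $\varphi$-calibrated subspace $\langle z_1,z_2,z_3\rangle$ with $z_3\coloneqq z_1\times_\varphi z_2$, so that the orthogonal complement $\mathfrak r$ is $4$-dimensional and carries an $\mathrm{SU}(2)$-structure $(\omega_1,\omega_2,\omega_3)$. One then computes $d\psi$ and $\tau_1$ explicitly and finds that $\tau_2=0$ is equivalent to the symmetry $a_{ij}=a_{ji}$ of the matrix $a_{ij}=\langle\omega_i,dz^j\rangle$, which in turn is exactly $\tau_1=0$. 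Your bidegree argument with $\dim V=2$, $\dim W=5$ produces a single excess equation in $\Lambda^{(2,3)}$; to extract $\tau_1=0$ from it you must further split $W=\langle z_3^\flat\rangle\oplus\mathfrak r^*$, which is precisely the paper's move. So your phrase ``analogous representation-theoretic arguments on the induced transverse structure'' hides the one nontrivial step: the $5$-dimensional $W$ carries no irreducible $\mathrm{G}$-structure on its own, and one must introduce $z_3$ to uncover the $\mathrm{SU}(2)$. Your remark on $k=3$---that calibration forces $\psi_{(1,3)}=\psi_{(3,1)}=0$ and thereby supplies enough excess bidegrees---is exactly right, and is in fact a cleaner explanation of the role of that hypothesis than the paper gives.
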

Note that the result does not hold without the calibration assumption when $\dim(\n')=3$:
on 2-step nilpotent Lie algebras $\n$ with $\dim(\n')=3$, it is possible to construct invariant ${\rm G}_2T$-structures $\varphi$ (not calibrating $\n'$) which are not coclosed, see Example \ref{ex:notcal}.

This result allows us to restrict our analysis to $2$-step nilpotent Lie algebras admitting coclosed $\G_2$-structures; 
such Lie algebras are classified \cite{BFF,DMR}. 
In detail: if $\dim(\n')=1$, the only three isomorphism classes of $7$-dimensional $2$-step nilpotent Lie algebras are 
$\mathfrak{h}_3\oplus\R^4$, $\mathfrak{h}_5\oplus\R^2$ and 	${\mathfrak h_{7}}$ (here $\mathfrak h_{n}$ denotes the $n$-dimensional Heisenberg Lie algebra) and each class admits coclosed ${\rm G}_2$-structures; if $\dim(\n')=2$ there are four isomorphism classes of $2$-step nilpotent Lie algebras admitting 
a coclosed ${\rm G}_2$-structure ($\n_{5,2}\oplus\R^2$,  $\mathfrak{h}_3\oplus\mathfrak{h}_3\oplus\R$, $\mathfrak{h}_3^{\C}\oplus\R$, $\n_{6,2}\oplus\R$); if $\dim(\n')=3$ all seven isomorphism classes of $2$-step nilpotent Lie algebras admit  coclosed ${\rm G}_2$-structures (see Appendix \ref{2stepnilclass} for a detailed description of these Lie algebras).  

When $\dim(\n')=3$ and the $\G_2$-structure is coclosed, 
our technical assumption is not restrictive from the metric point of view:
any metric induced by a coclosed $\G_2$-structure on $\n$ is also induced by a coclosed  $\G_2$-structure calibrating $\n'$ \cite[Lemma 4.9]{DMR}.

Since a coclosed $\G_2$-structure has $\tau_1=0$, any solution to \eqref{hetsysIntro} in this 
setting provides a solution to the heterotic $\G_2$-system \eqref{hetsysIntro0} with flat affine connection $\nabla$ and constant dilaton function.

\smallskip 

Our first main result focuses on invariant solutions in the case where $\dim(\n')=1$.

\begin{thm}\label{main1}
Let $M=\Gamma\backslash N$ be a $7$-dimensional $2$-step nilmanifold such that $\dim(\n')=1$.
\begin{itemize}
\item[$(i)$] If $\n\cong{\mathfrak h_{3}\oplus \R^{4}}$, the ${\rm G}_2$-system \eqref{hetsysIntro} on $M$ has no invariant solutions with $\lambda=0$;
\vspace{0.1cm}
\item[$(ii)$] If $\n\cong{\mathfrak h_{5}\oplus \R^{2}}$, there are solutions with $\lambda=0$, non-trivial torus bundle $P$, and with signature $(k_+,k_-)=(1,0)$;
\vspace{0.1cm}
\item[$(iii)$] If $\n\cong{\mathfrak h_{7}}$ there are solutions with $\lambda=0$, signature $(k_+,k_-)=(1,0)$ and non-zero curvature. Each solution with $\lambda=0$ and either $k_+=1$ or $k_-=0$ has trivial torus bundle, and there are solutions with $\lambda=0$, non-trivial torus bundle, and $(k_+,k_-)=(2,1)$;
\vspace{0.1cm}
\item[$(iv)$] The ${\rm G}_2$-system \eqref{hetsysIntro} on $M$ has no invariant solutions with $\lambda\neq 0$ and $k_+=1$, while there are solutions to the system with $\lambda\neq 0$, $(k_+,k_-)=(2,0)$ 
and non-trivial torus bundle for each of the three isomorphism classes of $\n$. 
  
\end{itemize}
\end{thm}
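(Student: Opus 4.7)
My plan is to reduce the theorem to finite algebraic problems on $\Lambda^*\n^*$ and then handle each case. By Proposition~\ref{coclosed}, every invariant solution has coclosed $\varphi$, so $\tau_1=\tau_2=0$ and $d\varphi=\tau_0*\varphi+*\tau_3$. Using $\varphi\wedge*\varphi=7\,\mathrm{vol}$ and $\varphi\wedge*\tau_3=0$ one computes $*(\varphi\wedge d\varphi)=7\tau_0$ and $*d\varphi=\tau_0\varphi+\tau_3$, whence
\[
H_\varphi=\tfrac{\tau_0}{6}\varphi-\tau_3,\qquad dH_\varphi=\tfrac{\tau_0}{6}\,d\varphi-d\tau_3.
\]
Writing the curvature as $F_\theta=\sum_a F^a\otimes E_a$ with invariant closed 2-forms $F^a$, the system reduces to: each $F^a$ lies in $\Lambda^2_{14}(\varphi)$, and $\tfrac{\tau_0}{6}\,d\varphi-d\tau_3=\sum_{a,b}Q_{ab}F^a\wedge F^b$, where $(Q_{ab})$ has signature $(k_+,k_-)$.

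For $\dim\n'=1$ I fix a basis with $de^i=0$ for $i\le6$ and $de^7=\omega$ of rank $2,4,6$ according to $\n\cong\mathfrak h_3\oplus\R^4,\mathfrak h_5\oplus\R^2,\mathfrak h_7$. The differential then acts by $\alpha+e^7\wedge\beta\mapsto\omega\wedge\beta$, so the coclosed condition on $\varphi=\varphi_0+e^7\wedge\sigma$ becomes a transparent linear system in the coefficients of $\varphi_0\in\Lambda^3\langle e^1,\dots,e^6\rangle$ and $\sigma\in\Lambda^2\langle e^1,\dots,e^6\rangle$. Using automorphisms of $\n$ (in particular the stabilizer of $\omega$ inside $\mathrm{GL}(6,\R)$), I reduce $\varphi$ to a short list of normal forms with few free parameters; the classifications of \cite{BFF,DMR} ensure coclosed representatives exist in each isomorphism class. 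The spaces $\Lambda^2_{14}(\varphi)\cap\ker d$ are then computed explicitly. For the existence claims in (ii), (iii) and the $(k_+,k_-)=(2,0)$ solutions in (iv), I proceed by ansatz: take a highly symmetric $\varphi$ in the canonical basis and let each $F^a$ be a single closed monomial $e^{ij}\in\Lambda^2_{14}(\varphi)$; the Bianchi identity then collapses to a small linear/quadratic system in a handful of coefficients, readily solved by hand to yield rational solutions.

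The non-existence statements (part (i), the trivial-bundle part of (iii), and the $k_+=1$ obstruction in (iv)) are harder and will be the main difficulty. A useful global tool is obtained by wedging $dH_\varphi=\sum Q_{ab}F^a\wedge F^b$ with $\varphi$ and integrating: since $F\wedge\varphi=-*F$ on $\Lambda^2_{14}$ and $\int H_\varphi\wedge d\varphi=(\tfrac{7\tau_0^2}{6}-\|\tau_3\|^2)\mathrm{Vol}(M)$, one obtains
\[
\sum_{a,b}Q_{ab}\langle F^a,F^b\rangle_{L^2}=\Big(\|\tau_3\|^2-\tfrac{7\tau_0^2}{6}\Big)\mathrm{Vol}(M).
\]
This excludes some signatures, but is vacuous for case (i) with $\lambda=0$ (the RHS is then nonnegative and consistent with any positive-definite $Q$). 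To rule out (i) one must exploit the low rank of $\omega=e^{12}$: the space of exact invariant 4-forms is only $6$-dimensional and the space of closed invariant 2-forms in $\Lambda^2_{14}(\varphi)$ is similarly constrained, so a direct check should show that $-d\tau_3$ cannot be written as $\sum Q_{ab}F^a\wedge F^b$ for any admissible $\varphi$ and $F^a$. The $k_+=1$ obstruction in (iv) demands an analogous pointwise analysis in which the rigid form of $\tfrac{\tau_0}{6}\,d\varphi-d\tau_3$ for $\tau_0\ne0$ cannot be matched by a single positive square minus negative squares. Performing these pointwise incompatibility checks for every admissible $\varphi$ up to the residual moduli after gauge fixing constitutes the core technical step of the proof.
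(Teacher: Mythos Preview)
Your reduction is sound through the point where the system becomes
\[
dH_\varphi=\sum Q_{ab}F^a\wedge F^b
\]
with coclosed $\varphi$, but you have not found the structure that makes the non-existence parts go through. The paper does not attempt to classify $\varphi$ up to automorphisms and then do a direct coefficient check. Instead it uses the $\mathrm{SU}(3)$-decomposition on $\v=(\n')^\perp$: writing $\varphi=\omega\wedge z^\flat+\Omega_+$ and $\alpha\coloneqq dz^\flat=2\lambda\omega+\alpha_0$, the condition $\tau_2=0$ forces $\alpha\in\Lambda^{(1,1)}\v^*$ with primitive part $\alpha_0$, and one computes $dH_\varphi=\alpha_0\wedge\alpha_0-4\lambda^2\omega\wedge\omega$. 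The instanton condition (when $\mathrm{rank}(\alpha)\ge 4$) forces each $F^a=\sigma^a\in\Lambda^{(1,1)}_0\v^*$, so the whole system collapses to one equation
\[
\alpha_0\wedge\alpha_0-4\lambda^2\omega\wedge\omega=\sum_r\epsilon_r\,\sigma^r\wedge\sigma^r,
\]
which via the dual Lefschetz map becomes $L_0^2-\sum_r\epsilon_r L_r^2=-8\lambda^2\mathrm{Id}$ with $L_r$ trace-free symmetric and $J$-commuting on $\v\cong\C^3$.

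Part (i) is then immediate and does not need any case-by-case check: if $\lambda=0$ then $\alpha=\alpha_0$ is a nonzero primitive $(1,1)$-form, hence $\alpha\wedge\alpha\neq 0$ automatically, so $\mathrm{rank}(\alpha)\ge 4$ and $\n\not\cong\mathfrak h_3\oplus\R^4$. Your plan to ``directly check that $-d\tau_3$ cannot be written as $\sum Q_{ab}F^a\wedge F^b$'' would have to quantify over all coclosed $\varphi$ on $\mathfrak h_3\oplus\R^4$, and you have not explained why the moduli space is manageable.

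The gap is more serious for (iii) (triviality of the bundle when $k_+=1$ or $k_-=0$) and for (iv) (no solutions with $\lambda\neq0$ and $k_+=1$). These rely on a nontrivial algebraic fact (Proposition~4.7 in the paper): if trace-free $J$-commuting symmetric endomorphisms $A,B_1,\dots,B_k$ of $\C^3$ satisfy $A^2=\lambda^2\mathrm{Id}+\sum_rB_r^2$, then necessarily $\lambda=0$ and all $B_r^2$ are positively collinear with $A^2$. The proof uses the identity $\mathrm{tr}(X^4)=\tfrac14(\mathrm{tr}X^2)^2$ for such $X$ and Cauchy--Schwarz. A companion lemma then shows that if $B$ is invertible and $A^2=a^2B^2$ then $A=\pm aB$. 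Together these give both the $\lambda\neq0$ obstruction in (iv) and the conclusion in (iii) that all $\sigma^r$ are proportional to $\alpha_0$ (hence exact, hence trivial bundle). Your proposal does not contain any idea of this kind; ``pointwise incompatibility checks for every admissible $\varphi$'' is not a substitute, since the obstruction is an identity in $\mathrm{SU}(3)$-representation theory, not a coefficient count in a normal form.
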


Our second main result focuses on the existence of invariant solutions in the case where $\dim(\n')\geq 2$.   

\begin{thm}\label{main2}
Let $M=\Gamma\backslash N$ be a $7$-dimensional $2$-step nilmanifold such that $\dim(\n')\geq2$. 

\begin{itemize}
\item[$(i)$] The ${\rm G}_2$-system \eqref{hetsysIntro} has solutions with $\lambda=0$ and $k_{+}=1$ for every isomorphism class of $\n$ admitting a coclosed ${\rm G}_2$-structure.  

\vspace{0.1cm}
\item[$(ii)$] If $\dim(\n')=2$, there are no invariant solutions to the ${\rm G}_2$-system \eqref{hetsysIntro} on $M$  
with $\lambda\neq 0$ and $k = k_++k_-\leq2$.
Moreover, if $\n\cong\n_{5,2}\oplus\R^2 $ then the ${\rm G}_2$-system on $M$ admits invariant solutions with $\lambda \neq 0$ and $k\geq3$.

\vspace{0.1cm}
\item[$(iii)$] If $\dim(\n')=3$, there are no invariant solutions to the ${\rm G}_2$-system \eqref{hetsysIntro} on $M$ with $\lambda\neq 0$, $k=1$, 
and $\varphi$ calibrating $\n'$. 
Moreover, the ${\rm G}_2$-system on $M$ admits invariant solutions with $\lambda \neq 0$, $k=2$ and $\varphi$ calibrating $\n'$ if and only if $\n\cong\n_{6,3}\oplus\R$. 
\end{itemize}
\end{thm}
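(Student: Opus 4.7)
The proof reduces the heterotic system to an algebraic problem on each of the $11$ relevant isomorphism classes of $2$-step nilpotent Lie algebras $\n$ (four with $\dim(\n')=2$ and seven with $\dim(\n')=3$, listed in the appendix), and then proceeds by case-by-case analysis. By Proposition \ref{coclosed}, every invariant G$_2T$-structure satisfying our hypotheses is coclosed, so $\tau_1=\tau_2=0$ and $d\varphi=\tau_0\,{*\varphi}+{*\tau_3}$ for some $\tau_3\in\Lambda^3_{27}\n^*$ and some constant $\tau_0$. Using $\varphi\wedge{*\tau_3}=0$ (by orthogonality of the G$_2$-irreducible summands $\Lambda^3_1$ and $\Lambda^3_{27}$) and $**=\mathrm{id}$ on odd degrees, the defining formula for $H_\varphi$ simplifies to
\[
H_\varphi \;=\; \tfrac{\tau_0}{6}\,\varphi-\tau_3, \qquad dH_\varphi \;=\; \tfrac{\tau_0^2}{6}\,{*\varphi}+\tfrac{\tau_0}{6}\,{*\tau_3}-d\tau_3,
\]
and the Bianchi identity becomes a quadratic equation in the cocycle $F_\theta\in Z^2(\n)\otimes\mathfrak{k}$ whose components lie in $\Lambda^2_{14}\n^*$ by the instanton condition.

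For part (i), with $\lambda=0$ one has $\tau_0=0$ and the Bianchi identity reduces to $-d\tau_3=\langle F_\theta\wedge F_\theta\rangle_{\mathfrak{k}}$. A uniform strategy is to produce, on each of the $11$ Lie algebras, an invariant \emph{strong} coclosed G$_2$-structure with $\tau_0=0$, i.e., a coclosed $\varphi$ with $\varphi\wedge d\varphi=0$ and $d{*}d\varphi=0$; the trivial $\mathrm{U}(1)$-bundle with flat connection and positive pairing on $\mathfrak{k}=\R$ then gives a solution with $k_+=1$. Using the normal forms for coclosed G$_2$-structures on each such $\n$ from \cite{DMR,BFF}, this becomes a finite algebraic system in a few parameters that can be solved explicitly in each class.

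For the negative statements in parts (ii) and (iii), with $\lambda\neq0$ I would decompose the Bianchi identity along G$_2$-irreducible components of $\Lambda^4\n^*$. The scalar term $\tfrac{\tau_0^2}{6}{*\varphi}\in\Lambda^4_1\n^*$ on the left is nonzero, so pairing with $\varphi$ forces $\sum_i\epsilon_i\,\varphi\wedge F^i\wedge F^i\neq0$; projecting onto components dominated by $\n'^*$ yields further quadratic constraints on the $F^i\in Z^2(\n)\cap\Lambda^2_{14}\n^*$. A dimension/rank count on these cocycles, performed on each Lie algebra, rules out solutions when $k=k_++k_-$ lies below the stated threshold. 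For the positive existence statements on $\n_{5,2}\oplus\R^2$ (with $k\geq3$) and on $\n_{6,3}\oplus\R$ (with $k=2$), I would pick adapted bases, write down a coclosed $\varphi$ calibrating $\n'$ when $\dim(\n')=3$, and exhibit explicit signs $\epsilon_i$ together with cocycles $F^i\in\Lambda^2_{14}\n^*$ that verify the Bianchi equation identically.

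The main obstacle is the case-work, since each of the $11$ Lie algebras carries its own cocycle space $Z^2(\n)$ and its own family of admissible G$_2$-structures, so the intersection $Z^2(\n)\cap\Lambda^2_{14}\n^*$ on which the $F^i$ must live depends delicately on both. In part (iii) the crux is isolating $\n_{6,3}\oplus\R$ as the unique class among the seven with $\dim(\n')=3$ whose $2$-cocycle space is flexible enough to express $dH_\varphi$ as a signed sum of squares with only two instanton components, while the remaining six classes fail this quadratic matching.
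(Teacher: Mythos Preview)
Your strategy for part~(i) has a fatal gap: on a non-abelian nilpotent Lie group there are no invariant strong coclosed $\G_2$-structures with $\tau_0=0$. Indeed, if such a $\varphi$ existed, then the trivial bundle with flat connection would solve the heterotic system with $\lambda=0$, $\tau_1=0$, and $F_\theta=0$, so Theorem~\ref{thm:dSGFLSE} forces $\mathrm{Scal}_{g_\varphi}=\tfrac12|H_\varphi|^2\ge 0$, contradicting Milnor's result that every left-invariant metric on a non-abelian nilpotent Lie group has strictly negative scalar curvature. Concretely, in the ${\rm SU}(2)$-reduction carried out in Lemma~\ref{dH2}, one has $dH_\varphi=-\bigl(\sum_i|\alpha_i|^2\bigr)\,{*_\r 1}\neq 0$ when $\lambda=0$, so the Bianchi identity cannot be satisfied with flat curvature; it must be balanced by a nonzero instanton. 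The paper's proof takes all $v_i^r=0$ and a single nonzero anti-self-dual $F_0^1\in\Lambda^2\r^*$ with $\epsilon_1|F_0^1|^2=\sum_i|\alpha_i|^2$, which works uniformly for every admissible $\n$.

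For parts~(ii) and~(iii), your sketch via ``$\G_2$-irreducible components'' and a ``dimension/rank count'' is too coarse to capture the obstruction, and the paper does \emph{not} proceed Lie-algebra by Lie-algebra. Instead, it uses the ${\rm SU}(2)$-splitting $\n=\r\oplus\langle z_1,z_2,z_3\rangle$ to rewrite the system as the coupled equations~\eqref{1}--\eqref{8} on $\r\cong\R^4$; for $\lambda\neq 0$ the structure forms are determined by $\alpha_i=\tfrac{1}{2\lambda}\sum_r\epsilon_r\,v_j^r\wedge v_k^r$ (cyclic $ijk$). Nonexistence for small $k$ then follows from elementary linear-algebra constraints on the $1$-forms $v_i^r\in\r^*$ --- see Lemmas~\ref{NEW}--\ref{NEW2} and the subsequent argument forcing $\alpha_i\wedge\alpha_j=0$ --- rather than from a rank count on closed instantons. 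In particular, for $\dim(\n')=3$ and $k=2$ the paper first shows that any solution forces $\alpha_i\wedge\alpha_j=0$ for all $i,j$, which already singles out $\n_{6,3}\oplus\R$ and $\n_{7,3,A}$ among the seven classes, and then a separate argument (Proposition~\ref{prop:n73A}) rules out $\n_{7,3,A}$. Your proposed quadratic matching on each of the seven algebras would have to rediscover both of these reductions; as written it is an outline of hopes, not a proof.
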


The proof of the results is obtained as follows. 

\smallskip 
If $\dim(\n')=1$, then every $\G_2$-structure $\varphi$ on $\n$ induces an ${\rm SU}(3)$-structure $(\omega,\Omega_+)$ on the orthogonal complement $\v$ of $\n'$.
The isomorphism class of $\n$ is determined by the rank of $\alpha\coloneq dz^{\flat}$, where $z$ is a unit-norm generator of 
$\n'$. The $\G_2$-system \eqref{hetsysIntro} can then be regarded as a system of forms on $\v$ and the decomposition of $2$- and $3$-forms in ${\rm SU}(3)$-modules can be used to simplify the equations. In particular, the system can be reduced to the equation
$$
\alpha_0\w\alpha_0-4\lambda^2  \omega \w \omega =\sum_{r=1}^k \epsilon_r \sigma^r \w \sigma^r\,, 
$$
on  $\R^6$ with the standard ${\rm SU}(3)$-structure, where $\alpha_{0}\in \Lambda^{(1,1)}(\R^6)^*$, $\lambda\in \R$, 
$\epsilon_1\dots,\epsilon_k\in \R\smallsetminus\{0\}$, $\sigma_1,\dots,\sigma_k\in \Lambda^{(1,1)}_0(\R^6)^*$. 

\smallskip
If $\dim(\n')=2$ or if $\dim(\n')=3$ and $\varphi$ calibrates $\n'$, one can split $\n$ as $\v\oplus \langle z_1,z_2,z_3\rangle$ with $\n'\subseteq \langle z_1,z_2,z_3\rangle$, so that the $4$-dimensional space $\v$ inherits an ${\rm SU}(2)$-structure $\omega_1,\omega_2,\omega_3$. 
In this case the system can be written in terms of forms on $\v$ and the decompositions of forms in ${\rm SU}(2)$-modules allows us to simplify the equations.

\smallskip

The paper is organized as follows. Section \ref{pre} contains preliminary material about $\G_2$-structures and the $\G_2$-system \eqref{hetsysIntro}. 
Section \ref{G2system} shows how to reformulate the $\G_2$-system in the invariant case in terms of invariant forms (see system \ref{invsystem}). 
Section \ref{subsec1} focuses on the case $\dim(\n') = 1$ and provides a proof of Theorem \ref{main1}, while Section \ref{2or3} studies the cases $\dim(\n') = 2$ and $\dim(\n') = 3$ and contains the proof of Theorem \ref{main2}. 
Section \ref{open} collects some problems arising from the results of the present paper which remain open. 
The paper also contains two appendices: in the first one we recall how to construct principal torus bundles on $2$-step nilmanifolds from integral forms, 
and in the second one we recall the classification of $7$-dimensional $2$-step nilpotent Lie algebras.

\smallskip
Table \ref{tab1} summarizes the results of Theorem \ref{main1} ($\dim(\n')=1$). 
The results of Theorem \ref{main2} ($\dim(\n')= 2$ or $3$) are summarized in Table \ref{tab2} only for solutions with  $\lambda\neq0$, 
as in this case solutions with $\lambda=0$ always exist for every isomorphism class of 2-step nilpotent Lie algebras admitting coclosed $\G_2$-structures.

\begin{table}[ht]
\begin{center}
\renewcommand{\arraystretch}{1.4}
\begin{tabular}{|c|c|c|c|c|}
\hline
${\mathfrak n}$ & $\lambda$ & $\exists\, \mathrm{sol}$ & $(k_+, k_-)$ & \mbox{Solutions with non-trivial Torus Bundle} \\
\hline

$\mathfrak{h}_3 \oplus \mathbb{R}^4$ & $\lambda = 0$ & $\times$ & -- & -- \\
\cline{2-5}
 & $\lambda \neq 0$ & $\checkmark$ & $k_+ \geq 2,\; k_- \geq 0$ & {$\checkmark$} \\
\hline

$\mathfrak{h}_5 \oplus \mathbb{R}^2$ & $\lambda = 0$ & $\checkmark$ & $k_+ \geq 1,\; k_- \geq 0$ & $\checkmark$\\
\cline{2-5}
 & $\lambda \neq 0$ & $\checkmark$ & $k_+ \geq 2,\; k_- \geq 0$ & {$\checkmark$}  \\
\hline

$\mathfrak{h}_7$ & $\lambda = 0$ & $\checkmark$ & $k_+ \geq 1,\; k_- \geq 0$ & 
\begin{tabular}{c}
only if $k_+\geq 2$ and  $k_-\geq1$ 
\end{tabular} \\
\cline{2-5}
 & $\lambda \neq 0$ & $\checkmark$ & $k_+ \geq 2,\; k_- \geq 0$ & {$\checkmark$} \\
\hline
\end{tabular}
\vspace{0.1cm}
\caption{Summary of the results for $\dim(\n')=1$}\label{tab1}
\end{center}
\end{table}

\vspace{-0.5cm}

\begin{table}[h]
\centering
\renewcommand{\arraystretch}{1.4}
\begin{tabular}{|c|c|c|}
\hline
$(\mathfrak n,\varphi)$ & $\nexists$ sol & $\exists$ sol \\
\hline

$\dim(\mathfrak n') = 2$ 
& if $k_+ + k_- \leq 2$ 
& for $\mathfrak n \cong \mathfrak n_{5,2} \oplus \mathbb{R}^2$ \\
& 
& with $k_+ \geq 3$, $k_-\geq 0$, or $k_+ \geq 2$, $k_- \geq 1$\\
\hline

$\dim(\mathfrak n') = 3$ 
and $\varphi|_{\mathfrak n'}=\mathrm{vol}_{\n'}$
& if $(k_+,k_-) = (1,0)$ 
& if and only if $\mathfrak n \cong \mathfrak n_{6,3} \oplus \mathbb{R}$ when $(k_+,k_-) = (2,0)$ \\
&  
&  open when $(k_+,k_-)=(1,1)$ or  $k_+ + k_- \geq 3$ \\
\hline

\end{tabular}

\vspace{0.1cm}
\caption{Summary of the results for $\dim(\mathfrak n') = 2,3$ and $\lambda\neq0$}\label{tab2}
\end{table}

Finally, we stress that the purpose of the present work is the construction and classification of solutions to the system \eqref{hetsysIntro} on 2-step nilmanifolds endowed with principal torus bundles. 
The relation of these solutions to particular formulations of heterotic supergravity, including the identification of the tangent-bundle connection with distinguished physical choices and the analysis of 
higher-order $\alpha'$-corrections, lies beyond the scope of the present work.

\bigskip

\noindent {\sc Acknowledgments.} 
This work was supported by the ``National Group for Algebraic and Geometric Structures, and their Application'' (GNSAGA - INDAM). 
A.M. was partially supported by the PNRR-III-C9-2023-I8 grant CF 149/31.07.2023 {\em Conformal Aspects of Geometry and Dynamics}.
A.R.~was also supported by the project PRIN 2022AP8HZ9 ``Real and Complex Manifolds: Geometry and Holomorphic Dynamics". 
L.V. was also supported by the project PRIN 20225J97H5 ``Differential-geometric aspects of manifolds via Global Analysis''. The authors would like to thank  Mario Garcia-Fern\'andez, Jason Lotay, and Carlos Shahbazi for useful comments. 
They are also grateful to the anonymous referee for their suggestions and remarks.

\section{Preliminaries}\label{pre}
\subsection{Basic properties of $\G_2$-structures} 
Let $M$ be a $7$-dimensional manifold with a $\G_2$-structure $\varphi \in \Omega^3(M)$. 
Recall that $\varphi$ determines a Riemannian metric $g_\varphi$ and an orientation on $M$. We denote by $*$ the corresponding Hodge operator and let
$\psi\coloneqq *\varphi$. 

At each point $x$ of $M$ there exists an oriented $g_\varphi$-orthonormal basis $\{e_1,\ldots,e_7\}$ of $T_xM$ with dual basis $\{e^1,\ldots,e^7\}$ such that
\begin{equation}\label{standard}
\begin{split}
    \varphi_x &= e^{127}+e^{347}+e^{567} + e^{135} -e^{245} -e^{146}-e^{236},\\
\psi_x &= e^{1234}+e^{1256}+e^{3456} +e^{1367}+e^{1457}+e^{2357}-e^{2467}. 
\end{split}
\end{equation}
These bases are said to be {\em adapted} to the G$_2$-structure $\varphi$. 

The spaces of $4$- and $5$-forms on $M$ decompose as follows into $\G_2$-irreducible summands
$$
\Omega^4(M)=\,\Omega^4_1\oplus \Omega^4_7\oplus \Omega^4_{27}\,,\quad 
\Omega^5(M)=\Omega^5_7\oplus \Omega^5_{14}\,,
$$
where 
\[
\begin{split}
\Omega^4_1&=\{f\,\psi\,\,:\,\, f\in C^{\infty}(M)\}\,,\qquad
\Omega^4_7=\{\alpha\wedge \varphi\,\,:\,\,\alpha \in \Omega^1(M) \}\,,\\ 
\Omega^4_{27}&=\{*\gamma\in\Omega^4(M) \,\,:\,\, \gamma\wedge\varphi=0,~\gamma\wedge\psi=0  \},\\
\Omega^5_{7}&=\{\alpha \wedge \psi\,\,:\,\,\alpha \in \Omega^1(M)\}\,,\qquad
\Omega^5_{14}=\{*\beta\in \Omega^5(M)\,\,:\,\, \beta\wedge\varphi = -*\beta\}\,.
\end{split}
\]
The exterior differentials of $\varphi$ and $\psi$ decompose accordingly as 
\begin{eqnarray}\label{g2torsion}
    \begin{aligned}
d\varphi=&\,\tau_0\,\psi+3\tau_1\w\varphi + *\tau_3,\\
d\psi=&\,4\tau_1\w \psi-*\tau_2\,,
\end{aligned}
\end{eqnarray}
where $\tau_0\in\ C^\infty(M)$, $\tau_1\in\Omega^1(M)$, $\tau_2\in\Omega^2_{14} = *\Omega^5_{14}$, and $\tau_3\in\Omega^3_{27}=*\Omega^4_{27}$ 
are called the {\em intrinsic torsion forms} of $\f$, 
and the presence of $\tau_1$ in both expressions is shown in \cite[Prop.~1]{Bry}. 

A G$_2$-structure $\varphi$ is said to be
\begin{itemize}
    \item {\em torsion-free} if all intrinsic torsion forms $\tau_0,\tau_1,\tau_2,\tau_3$ vanish, 
            namely if $\f$ is both closed and coclosed;
    \item {\em coclosed} if $d*\f=0$, namely if both $\tau_1$ and $\tau_2$ are zero;
    \item $\G_2T$ ($\G_2$ {\em with torsion}) if $\tau_2=0$. 
\end{itemize}

\medskip
From  \cite[Thm. 4.7]{FI}, a 7-manifold $M$ endowed with a ${\rm G}_2$-structure $\varphi$ admits a G$_2$-connection $\nabla$ with totally skew-symmetric torsion if and only if $\tau_2=0$. 
In such a case, $\nabla$ is unique and its torsion $3$-form $H_\varphi$ is given by 
\begin{equation}\label{hphi}
    H_{\varphi}=   \frac{1}{6}\tau_0\, \varphi -\tau_1\iprod\psi -\tau_3\,.
\end{equation}

For later use, we make the following observation:
\begin{rmk}\label{tau2tau1}
    If $\tau_1=0$, then the first equation in \eqref{g2torsion} implies $H_\f= \frac{7}{6}\tau_0\, \varphi-*d\f$.
\end{rmk}

\subsection{The $\G_2$-system \eqref{hetsysIntro2}} \label{sec:hetG2}

Let $M$ be a compact connected $7$-dimensional manifold. Consider a $\G_2$-structure $\f\in\Omega^3(M)$ with dual 4-form $\psi\coloneqq *\f$ and intrinsic torsion forms $\tau_0,\tau_1,\tau_2,\tau_3$ defined in \eqref{g2torsion}, a $\hat{K}$-principal bundle $\hat{P}\to M$, where $\hat{K}$ is a $k$-dimensional real Lie group  with Lie algebra $\hat{\mathfrak k}$, a connection 1-form $\hat{\theta}$ on $\hat{P}$ with curvature $F_{\hat{\theta}}$, and an $\mathrm{Ad}(\hat{K})$-invariant non-degenerate symmetric bilinear form
$$
\langle \cdot,\cdot\rangle_{\hat{\mathfrak{k}}} \colon  \hat{\mathfrak k}\times \hat{\mathfrak k}\to \R. 
$$

Recall from \cite{dSGFLSE} that the quadruple $(\varphi,\hat{P},\hat{\theta},\langle \cdot,\cdot\rangle_{\hat{\mathfrak{k}}})$ 
is a solution to the {\em$\G_2$-system} \eqref{hetsysIntro2} if 
the following equations are satisfied:
\begin{equation}\label{eq:HetG2sys0}
\begin{aligned}
\tau_2&=0,\\
F_{\hat{\theta}}\wedge \psi&=0,\\
dH_{\varphi}&=\langle F_{\hat{\theta}}\wedge F_{\hat{\theta}}\rangle_{\hat{\mathfrak{k}}} \,,
\end{aligned}
\end{equation}
where the 3-form $H_{\varphi}$ is defined by \eqref{hphi}, and the extension of the bracket $\langle\cdot,\cdot\rangle_{\hat{\mathfrak{k}}}$ 
to $\hat{\mathfrak{k}}$-valued differential forms on $M$ is defined as follows. 
Let $\{t_1,\ldots,t_k\}$ be a basis of $\hat{\mathfrak{k}}$. 
For every $\hat{\mathfrak{k}}$-valued forms $\alpha = \sum \alpha^i\otimes t_i$ and $\beta = \sum \beta^j\otimes t_j$, where $\alpha^i\in\Omega^r(M)$ and 
$\beta^j\in\Omega^s(M)$, we set
\[
\langle \alpha \w \beta \rangle_{\hat{\mathfrak{k}}}\coloneqq \sum_{1\leq i,j\leq k} \alpha^i \w \beta^j \langle t_i, t_j \rangle_{\hat{\mathfrak{k}}}. 
\]
In particular, if we consider an orthogonal basis $\{t_1,\ldots,t_k\}$ of $\hat{\mathfrak{k}}$ and let $\epsilon_i\coloneqq \langle t_i,t_i\rangle_{\hat{\mathfrak{k}}}$, 
then  
\[
\langle \alpha \w \beta \rangle_{\hat{\mathfrak{k}}} = \sum_{i=1}^k \epsilon_i\, \alpha^i \w \beta^i.  
\]

As recalled in the introduction, the intrinsic torsion form $\tau_0$ of the $\G_2$-structure $\f$ is constant for every solution 
of \eqref{eq:HetG2sys0}, and we henceforth introduce the notation $\lambda\coloneqq\frac{7}{12}\tau_0$
as in \cite{dSGFLSE}.

The next result shows how $\lambda$ is related to other quantities, such as the scalar curvature of $g_\f$. 
\begin{thm}{\cite[Thm.~3.9]{dSGFLSE}}\label{thm:dSGFLSE}
    Let $(\varphi, \hat{P},\hat{\theta},\langle\cdot,\cdot\rangle_{\hat{\mathfrak{k}}})$ be a solution to the $\mathrm{G}_2$-system \eqref{eq:HetG2sys0} on a $7$-manifold $M.$ Then 
    \[
        \mathrm{Scal}_{g_\varphi} - \frac12|H_\varphi|^2 + |F_{\hat{\theta}}|_{\hat{\mathfrak{k}}}^2-8d^*\tau_1-16|\tau_1|^2 = 4\lambda^2,
    \]
    where $|F_{\hat{\theta}}|_{\hat{\mathfrak{k}}}^2 \coloneqq * \langle F_{\hat{\theta}}\w *F_{\hat{\theta}}\rangle_{\hat{\mathfrak{k}}}$. 
\end{thm}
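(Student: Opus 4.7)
My plan is to derive this scalar identity by combining the Friedrich--Ivanov scalar curvature formula for $\G_2T$-structures with the heterotic Bianchi identity and the $\G_2$-instanton condition, exploiting that along any solution of \eqref{eq:HetG2sys} the $\tau_0$ component is constant.

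As a first ingredient, since $\tau_2=0$ on any solution, I would invoke the Friedrich--Ivanov-type scalar curvature formula, which on a $\G_2T$-structure expresses $\mathrm{Scal}_{g_\varphi}$ as an explicit quadratic polynomial in $\tau_0, |\tau_1|^2, |\tau_3|^2$ together with a linear term in $d^*\tau_1$; this is derived from the Ricci identities for the canonical connection with torsion $H_\varphi$ and from \eqref{g2torsion}--\eqref{hphi}. As a second ingredient, from the decomposition \eqref{hphi},
\[
H_\varphi \;=\; \tfrac{\tau_0}{6}\varphi \;-\; \tau_1\iprod\psi \;-\; \tau_3,
\]
whose summands lie in the mutually orthogonal $\G_2$-irreducible subspaces $\Omega^3_1, \Omega^3_7, \Omega^3_{27}$, I directly compute, using $|\varphi|^2=7$ and $|\tau_1\iprod\psi|^2=4|\tau_1|^2$, that
\[
|H_\varphi|^2 \;=\; \tfrac{7}{36}\tau_0^2 \;+\; 4|\tau_1|^2 \;+\; |\tau_3|^2.
\]

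The third ingredient couples the Bianchi identity with the $\G_2$-instanton condition. The instanton condition $F_\theta\wedge\psi=0$ is equivalent to $F_\theta\in\Omega^2_{14}$, or equivalently $*F_\theta=-F_\theta\wedge\varphi$. Wedging the Bianchi identity $dH_\varphi=\langle F_\theta\wedge F_\theta\rangle_{\mathfrak{k}}$ with $\varphi$ therefore yields
\[
dH_\varphi\wedge\varphi \;=\; \langle F_\theta\wedge F_\theta\wedge\varphi\rangle_{\mathfrak{k}} \;=\; -\langle F_\theta\wedge *F_\theta\rangle_{\mathfrak{k}} \;=\; -|F_\theta|^2_{\mathfrak{k}}\,\mathrm{vol}_{g_\varphi}.
\]
On the other hand, using \eqref{hphi}, the torsion equations \eqref{g2torsion}, and the already-cited fact that $\tau_0$ is constant for solutions, $dH_\varphi\wedge\varphi$ can be expanded as an explicit linear combination of $\tau_0^2, |\tau_1|^2, |\tau_3|^2$ and $d^*\tau_1$ times the volume form. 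Equating the two expressions furnishes a formula for $|F_\theta|^2_{\mathfrak{k}}$ in terms of the intrinsic torsion. Substituting this, together with the previous computations of $\mathrm{Scal}_{g_\varphi}$ and $|H_\varphi|^2$, into the left-hand side of the claimed identity, and using $\lambda^2=\tfrac{49}{144}\tau_0^2$, reduces the statement to an arithmetic identity between rational coefficients.

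I expect the main obstacle to be the bookkeeping required to evaluate $dH_\varphi\wedge\varphi$: this needs several $\G_2$ identities of the type $d(\alpha\iprod\psi)\wedge\varphi$ and $d(*\beta)\wedge\varphi$, careful use of \eqref{g2torsion}, and matching of numerical constants. A conceptually cleaner alternative, which is the route taken in \cite{dSGFLSE}, is via the Killing spinor reformulation: the first two equations of \eqref{eq:HetG2sys} amount to the existence of a unit $\G_2$-invariant spinor $\eta$ satisfying $\nabla^+\eta=0$ for the connection $\nabla^+=\nabla^{\mathrm{LC}}+\tfrac12 H_\varphi$, and $F_\theta\cdot\eta=0$. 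Applying the Lichnerowicz identity for $\nabla^+$ to $\eta$ and using the gaugino condition together with the Bianchi identity yields the scalar equation directly, without the explicit torsion bookkeeping.
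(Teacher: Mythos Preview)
The paper does not prove this statement: Theorem~\ref{thm:dSGFLSE} is quoted verbatim from \cite[Thm.~3.9]{dSGFLSE} and used as a black box to derive Corollaries~\ref{cor:bnegdef} and~\ref{cor:solvunimodnd}. There is therefore no in-paper proof to compare your proposal against.

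That said, your sketch is a reasonable outline of how such an identity is obtained. The three ingredients you isolate are the right ones: the scalar curvature formula for $\G_2T$-structures in terms of the intrinsic torsion (as in \cite{FI,Bry}), the orthogonal decomposition of $H_\varphi$ giving $|H_\varphi|^2=\tfrac{7}{36}\tau_0^2+4|\tau_1|^2+|\tau_3|^2$, and the key trick of wedging the Bianchi identity with $\varphi$ and using $*F_\theta=-F_\theta\wedge\varphi$ for $\G_2$-instantons to turn $dH_\varphi\wedge\varphi$ into $-|F_\theta|_{\mathfrak{k}}^2\,\mathrm{vol}$. The remaining step, expanding $dH_\varphi\wedge\varphi$ in torsion components, is indeed the bookkeeping you flag; it is carried out in \cite{dSGFLSE}, where the spinorial reformulation you mention (Lichnerowicz for the connection with torsion applied to the $\G_2$-invariant spinor) is the organizing principle. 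Since the present paper takes the result for granted, your proposal already goes further than what is required here.
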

We observe that this theorem readily implies the following. 
\begin{cor}\label{cor:bnegdef}
    Let $(\varphi, \hat{P},\hat{\theta},\langle\cdot,\cdot\rangle_{\hat{\mathfrak{k}}})$ be a solution to the $\mathrm{G}_2$-system \eqref{eq:HetG2sys0} with negative definite 
    $\langle\cdot,\cdot\rangle_{\hat{\mathfrak{k}}}$, $d^*\tau_1\geq0$  and $\mathrm{Scal}_{g_\varphi}\leq0$. Then, the $\mathrm{G}_2$-structure is torsion-free and the connection $\hat{\theta}$ is flat. 
\end{cor}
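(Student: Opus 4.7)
The plan is to use the scalar identity of Theorem \ref{thm:dSGFLSE} as a pointwise constraint on $M$ and then run an elementary sign-counting argument. Every term appearing in that identity is a well-defined function: $\mathrm{Scal}_{g_\varphi}$, $|H_\varphi|^2$, $|\tau_1|^2$, and $d^*\tau_1$ are standard Riemannian quantities, and $|F_\theta|_\mathfrak{k}^2=*\langle F_\theta\wedge *F_\theta\rangle_\mathfrak{k}$ is a 0-form (the expression inside the Hodge star lies in $\Omega^7(M)$). Since $\tau_0=\tfrac{12}{7}\lambda$ is constant by \cite[Thm.~4.9]{dSGFLSE}, the right-hand side $4\lambda^2$ is a non-negative constant.

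Next I would determine the sign of each summand on the left-hand side. Choosing an orthogonal basis $\{t_1,\dots,t_k\}$ of $\mathfrak{k}$ with $\epsilon_i=\langle t_i,t_i\rangle_\mathfrak{k}<0$ and writing $F_\theta=\sum_iF_\theta^i\otimes t_i$, an immediate unpacking gives
\[
|F_\theta|_\mathfrak{k}^2=\sum_{i=1}^{k}\epsilon_i\,|F_\theta^i|^2\le 0,
\]
with equality if and only if every $F_\theta^i$ vanishes, i.e.\ $F_\theta=0$. The assumptions $\mathrm{Scal}_{g_\varphi}\le 0$ and $d^*\tau_1\ge 0$ make the summands $\mathrm{Scal}_{g_\varphi}$ and $-8\,d^*\tau_1$ non-positive, while $-\tfrac12|H_\varphi|^2$ and $-16|\tau_1|^2$ are manifestly non-positive. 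Hence the left-hand side of the identity in Theorem \ref{thm:dSGFLSE} is $\le 0$ pointwise, whereas the right-hand side is $\ge 0$; both must therefore vanish identically.

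Finally I would read off the consequences. The vanishing of the left-hand side forces each individual non-positive summand to be zero, so $\lambda=0$ (hence $\tau_0=0$), $\tau_1=0$, $H_\varphi=0$, and $F_\theta=0$. Substituting $\tau_0=\tau_1=0$ into the expression \eqref{hphi} for the torsion $3$-form yields $H_\varphi=-\tau_3$, so $H_\varphi=0$ gives $\tau_3=0$; combined with the standing hypothesis $\tau_2=0$ implicit in the $\G_2T$ condition (which holds for every solution of the heterotic system by the first equation of \eqref{eq:HetG2sys}), all four intrinsic torsion forms vanish and $\varphi$ is torsion-free. The relation $F_\theta=0$ is exactly flatness of $\theta$.

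I do not expect a real obstacle: the entire argument is sign-counting layered on top of Theorem \ref{thm:dSGFLSE}. The only point that deserves a careful check is that $|F_\theta|_\mathfrak{k}^2$ is indeed pointwise non-positive under the negative-definiteness assumption, which is clear from the diagonal expansion above.
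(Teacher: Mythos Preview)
Your proposal is correct and follows essentially the same approach as the paper: apply the scalar identity of Theorem~\ref{thm:dSGFLSE}, observe that every summand on the left is non-positive under the hypotheses while the right-hand side is non-negative, and conclude that each term vanishes. You are slightly more explicit than the paper in verifying $|F_\theta|_{\mathfrak{k}}^2\le 0$ via a diagonal expansion and in extracting $\tau_3=0$ from $H_\varphi=0$ through \eqref{hphi}, but the argument is the same.
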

\begin{proof}
    Using Theorem \ref{thm:dSGFLSE} and the hypothesis, we have 
    \[
        0 = \mathrm{Scal}_{g_\varphi} - \frac12|H_\varphi|^2 + |F_{\hat{\theta}}|_{\hat{\mathfrak{k}}}^2-8d^*\tau_1-16|\tau_1|^2 - 4\lambda^2 \leq 0. 
    \]
    Since all summands are non-positive,  
    this implies that the intrinsic torsion forms $\tau_0 = \frac{12}{7}\lambda$, $\tau_1$ and $\tau_3$ are zero and that the connection $\hat{\theta}$ has vanishing curvature $F_{\hat{\theta}}$. 
\end{proof}

\begin{cor}\label{cor:solvunimodnd}
    Let $(\varphi, \hat{P},\hat{\theta},\langle\cdot,\cdot\rangle_{\hat{\mathfrak{k}}})$ be a solution to the $\mathrm{G}_2$-system \eqref{eq:HetG2sys0} on a $7$-dimensional 
    unimodular solvable Lie group $\mathrm{G}$.
    If the $\mathrm{G}_2$-structure $\varphi$ is left-invariant and the bilinear form $\langle\cdot,\cdot\rangle_{\hat{\mathfrak{k}}}$ is negative definite, then the 
    $\mathrm{G}_2$-structure is torsion-free, the induced metric $g_\varphi$ is flat, and the connection $\hat{\theta}$ is flat. 
\end{cor}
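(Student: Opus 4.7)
The plan is to combine Corollary \ref{cor:bnegdef} with a classical theorem of Milnor on left-invariant metrics on solvable Lie groups. The two hypotheses of Corollary \ref{cor:bnegdef} that are not immediate, namely $d^*\tau_1\geq 0$ and $\mathrm{Scal}_{g_\varphi}\leq 0$, should be verified using the unimodularity and the solvability of $\mathrm{G}$ respectively; then the extra conclusion of flatness of $g_\varphi$ should be obtained from the equality case of Milnor's theorem.

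First I would observe that since $\varphi$ is left-invariant, so are all its intrinsic torsion forms; in particular $\tau_1$ is left-invariant and $X\coloneqq \tau_1^\sharp$ is a left-invariant vector field on $\mathrm{G}$. A standard Koszul-formula computation with a left-invariant orthonormal frame gives $\mathrm{div}(X)=-\mathrm{tr}(\mathrm{ad}_X)$, which vanishes identically by unimodularity. Using $d^*\tau_1=-\mathrm{div}(X)$, this yields $d^*\tau_1=0$, and a fortiori $d^*\tau_1\geq 0$.

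Next I would invoke Milnor's theorem (\emph{Curvatures of left-invariant metrics on Lie groups}, 1976): every left-invariant metric on a connected solvable Lie group has non-positive scalar curvature, with equality if and only if the metric is flat. In particular $\mathrm{Scal}_{g_\varphi}\leq 0$, so the hypotheses of Corollary \ref{cor:bnegdef} are satisfied, and one concludes that $\varphi$ is torsion-free and $\theta$ is flat. Inspecting the proof of that corollary, each non-positive summand in the identity of Theorem \ref{thm:dSGFLSE} must vanish individually; in particular $\mathrm{Scal}_{g_\varphi}=0$. The equality case of Milnor's theorem then forces $g_\varphi$ to be flat, completing the argument.

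The main technical point is the availability and precise statement of Milnor's non-positivity of scalar curvature on solvable Lie groups (together with the characterization of equality as flatness). Once that is in place the remaining arguments are routine: the divergence computation for left-invariant vector fields on unimodular Lie groups, and a direct inspection of the identity of Theorem \ref{thm:dSGFLSE} under the negative-definiteness of $\langle\cdot,\cdot\rangle_{\mathfrak{k}}$.
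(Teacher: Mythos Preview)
Your proposal is correct and follows essentially the same route as the paper: verify $d^*\tau_1=0$ from unimodularity (the paper phrases this as ``every left-invariant $6$-form is closed,'' which is the Hodge-dual reformulation of your divergence computation), invoke Milnor's dichotomy for left-invariant metrics on solvable groups to get $\mathrm{Scal}_{g_\varphi}\le 0$, apply Corollary~\ref{cor:bnegdef}, and read off flatness of $g_\varphi$ from the equality case. Your write-up is in fact slightly more explicit than the paper's about how flatness of $g_\varphi$ is extracted at the end.
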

\begin{proof}
    Since $\varphi$ is left-invariant, it induces a left-invariant metric $g_\varphi$ and the corresponding intrinsic torsion forms $\tau_0$, $\tau_1$ and 
    $\tau_3$ are left-invariant, too. 
    Since $\mathrm{G}$ is unimodular, every left-invariant form of degree $6=\dim(\mathrm{G})-1$ on it is closed, thus $d^*\tau_1=0$. 
    Since $\mathrm{G}$ is solvable, then either $g_\varphi$ is flat or $\mathrm{Scal}_{g_\varphi}<0$ \cite{Mil}. 
    The thesis then follows from Corollary \ref{cor:bnegdef}. 
\end{proof}

In the following, we shall look for invariant solutions to the heterotic G$_2$-system \eqref{hetsysIntro0} 
on $7$-dimensional $2$-step nilmanifolds $M=\Gamma\backslash N$ endowed with a principal torus bundle $\mathbb{T}^k\to P\to M$ 
and a flat affine connection $\nabla$, so we will focus on the system \eqref{hetsysIntro}, which is equivalent to
\begin{equation}\label{eq:HetG2sys}
\begin{aligned}
\tau_2&=0,\\
F_{{\theta}}\wedge \psi&=0,\\
dH_{\varphi}&=\langle F_{{\theta}}\wedge F_{{\theta}}\rangle_{{\mathfrak{t}}^k} \,.
\end{aligned}
\end{equation}
Since every solution of \eqref{eq:HetG2sys} is also a solution of \eqref{eq:HetG2sys0}, 
the last corollary ensures that there are no solutions admitting a negative definite bilinear form 
$\langle\cdot,\cdot\rangle_{\mathfrak{t}^k}$. In other words, if we denote by $(k_+,k_{-})$ the signature of $\langle \cdot,\cdot\rangle_{\mathfrak{t}^k}$, then $k_+\ge 1$ whenever a solution of the $\G_2$-system \eqref{eq:HetG2sys} exists.

\subsection{2-step nilpotent metric Lie algebras} 
A real $n$-dimensional Lie algebra $\n$ with derived algebra $\n'\coloneqq [\n,\n]$ is {\em $2$-step nilpotent} if 
\[
\{0\}\neq\n'\subseteq \mathfrak{z},
\]
where $\mathfrak{z}$ is the center of $\n$. 

Given a metric $g$ on $\n$, one can consider the orthogonal splitting $\n = \r \oplus \n'$, where $\r \coloneqq (\n')^\perp$. 
Choosing a basis $\{z_1,\ldots,z_{n'}\}$ of $\n'$ and a basis $\{\ee_1,\ldots,\ee_{n-n'}\}$ of $\r$,  
it is possible to describe the structure of $\n$ in terms of the differentials 
of the dual basis $\{\ee^1,\ldots,\ee^{n-n'}, z^1,\ldots,z^{n'}\}$ of $\n^*$:
\[
\begin{cases}
d\ee^i = 0,&\quad 1\leq i\leq n-n',\\
dz^r = :\alpha_j,&\quad 1\leq j\leq n',
\end{cases}
\]
where $\alpha_j\in\Lambda^2\r^*$ for $1\leq j\leq n'$.  
Typically, the properties of the forms $\alpha_j$ allow one to distinguish between isomorphic and non-isomorphic Lie algebras 
(see, e.g., Remark \ref{rem:heisenberg} below).

\section{The $\G_2$-system \eqref{eq:HetG2sys}  on $2$-step nilmanifolds}\label{G2system}

We now consider the G$_2$-system \eqref{eq:HetG2sys}  on $2$-step nilmanifolds endowed with principal torus bundles. 
In this case, $M=\Gamma\backslash N$ is the quotient of a $7$-dimensional, simply connected, 2-step nilpotent Lie group $N$ 
by a cocompact discrete subgroup $\Gamma$, 
and the Lie group $K=\mathbb{T}^k$ is a $k$-dimensional torus, so $\mathfrak{k} = \mathfrak{t}^k\cong\R^k$ and $F_\theta=d\theta$.   

Given a $7$-dimensional nilmanifold $M=\Gamma\backslash N$, we look for solutions $(\varphi,P,\theta,\langle \cdot,\cdot\rangle_{\mathfrak{k}})$ of \eqref{eq:HetG2sys} such that both the 3-form $\f\in\Omega^3(M)$ 
and the curvature $F_\theta$, thought as a $\mathfrak{k}$-valued 2-form on $M,$ are induced by left-invariant forms on $N.$ 
 This allows us to study the system \eqref{eq:HetG2sys} for a $\G_2$-structure $\f\in\Lambda^3\n^*$ and a closed $\mathfrak{k}$-valued 2-form $F_\theta\in\Lambda^2\n^*\otimes\mathfrak{k}$ on the Lie algebra $\n$ of $N.$ 
We refer to such solutions as {\em invariant solutions} to the $\G_2$-system \eqref{eq:HetG2sys}. 

Note that every bilinear form on $\mathfrak{k}$ is  $\mathrm{Ad}(\mathbb{T}^k)$-invariant. 
It is convenient to fix an $\langle\cdot,\cdot\rangle_{\mathfrak{k}}$-orthogonal basis $\{t_1,\dots,t_k\}$ of $\mathfrak{k}$ and set 
\[
\epsilon_r \coloneqq \langle t_r,t_r\rangle_{\mathfrak k},\,\ 1\leq r \leq k. 
\]
We then have 
\[
\quad F_\theta=\sum_{r=1}^k F^r\,t_r, 
\]
where $F^r\in\Lambda^2\n^*$ are closed $2$-forms on $\n$, for $1\leq r\leq k$, and 
\[
\langle F_\theta \w F_\theta\rangle_{\mathfrak{k}} = \sum_{r=1}^k\epsilon_{r}\, F^{r}\wedge F^{r}. 
\]

\medskip 
If $\f$ is a left-invariant $\G_2$-structure on $N$, then its intrinsic torsion forms are left-invariant, too. 
In particular, $\tau_0$ is constant independently of $\f$ being a solution to the $\G_2$-system \eqref{eq:HetG2sys}. 
The study of invariant solutions to the $\G_2$-system on $M=\Gamma\backslash N$ 
can then be reduced to the study of the following set of equations on the Lie algebra $\n$ of $N$ for 
a ${\rm G}_2$-structure $\f\in\Lambda^3\n^*$,  2-forms $F^1,\ldots,F^r\in\Lambda^2\n^*$ 
and non-zero constants $\epsilon_1,\ldots,\epsilon_r\in \mathbb R\smallsetminus\{0\}$: 
\begin{equation}
\label{invsystem}
\begin{cases}
\tau_2=0\,,\\
dF^r=0,\qquad\ 1\leq r\leq k\,,\\
F^r\wedge \psi=0,\quad 1\leq r\leq k\,,\\
dH_{\varphi}=\sum_{r=1}^k\epsilon_{r}\, F^{r}\wedge F^{r}\,. 
\end{cases}
\end{equation}

\medskip 
On the other hand, a suitable solution $(\f,F^1,\ldots,F^k,\epsilon_1,\ldots,\epsilon_k)$ to system \eqref{invsystem} on a $7$-dimensional 
$2$-step nilpotent Lie algebra $\n$ gives rise to an invariant solution to 
the $\G_2$-system \eqref{eq:HetG2sys} on a certain $2$-step nilmanifold endowed with a principal torus bundle.  
In detail, consider the $g_\f$-orthogonal splitting $\n=(\n')^\perp\oplus \n'$ and a basis $\mathcal B=\{\ee_1,\ldots,\ee_{7-n'},z_1,\ldots,z_{n'}\}$ of $\n$ 
for which the structure constants are integers and such that $\{z_1,\ldots,z_{n'}\}$ is a basis of $\n'$ ($1\leq n'\leq 3$).  
Then 
\[
\Gamma\coloneqq\exp\left(\mathrm{span}_\Z(6\ee_1,\ldots,6\ee_{7-n'},z_1,\ldots,z_{n'})\right)
\]
is a cocompact lattice of the simply connected nilpotent Lie group $N = \exp(\n)$ and thus $M=\Gamma\backslash N$ is a 2-step nilmanifold.  
The choice of this particular lattice is explained in Remark \ref{rem:6}.

The $\G_2$-structure $\f\in\Lambda^3\n^*$ solves the first equation of \eqref{invsystem}, so it gives rise to a left-invariant $\G_2T$-structure 
on $N$ with constant intrinsic torsion form $\tau_0$.  
This in turn defines a $\G_2$-structure of the same type on the quotient $M=\Gamma\backslash N$. 
If the $2$-forms $F^1,\ldots,F^k$ solving \eqref{invsystem} are {\em integral} 
with respect to $\mathcal{B}$, namely $F^r(x,y)\in \mathbb Z$ for every $x,y\in \mathcal B$ and $1\leq r\leq k$ (cf.~Definition \ref{integralForms}), 
then there exists a principal $\mathbb{T}^k$-bundle $P\to M$ endowed with a connection $\theta$ whose curvature is $F_\theta = \sum_{r=1}^k F^rt_r$ 
(see Theorem \ref{Theoapp1} of Appendix \ref{sect:Tkbdl}), and we can define 
\[
\langle \cdot,\cdot\rangle_{\mathfrak{k}} \coloneqq \sum_{r=1}^k\epsilon_r t^{r}\otimes t^{r}\,,
\]
where $\{t_1,\ldots,t_k\}$ is a basis of the Lie algebra of $\mathbb{T}^k$ and $\{t^1,\dots,t^k\}$ is its dual basis. 
The data $(\f,P,\theta,\langle\cdot,\cdot\rangle_\mathfrak{k})$ is then an invariant solution to the $\G_2$-system \eqref{eq:HetG2sys} on $M=\Gamma\backslash N.$

\medskip 
We can then focus on the system \eqref{invsystem}. 
We study it by considering the cases $\dim(\mathfrak{n}')=1$ and $\dim(\mathfrak{n}')=2$ or $3$ separately. 

\section{The case $\dim(\n')=1$} \label{subsec1}
Assume that $\n'\coloneqq [\n,\n]$ is $1$-dimensional. 
Let $\varphi$ be a ${\rm G}_2$-structure on $\n$ and let $g_\varphi$ be the associated metric.  
Denote by $\v$ the $g_\varphi$-orthogonal complement to $\n'$. 
Then, once a unit vector $z\in\n'$ is fixed, $\varphi$ induces an ${\rm SU}(3)$-structure $(\omega,\Omega_+)\in\Lambda^2\v^* \times \Lambda^3\v^*$ on $\v$ via the relation 
\begin{equation}\label{varphi}
\varphi=\omega \wedge  z^\flat+\Omega_+\,,
\end{equation}
where $z^{\flat}$ is the dual covector of $z$ with respect to $g_\varphi$. 
Using the splitting $\n=\v\oplus\n' =  \v \oplus \langle z\rangle$, we can write any $2$-form $F$ on $\n$ as 
$$
F=F_{\v}+ \eta\wedge z^{\flat}, 
$$
for some $\eta\in \Lambda^1\v^*$, where $F_{\v}$ is the projection of $F$ onto $\Lambda^2\v^*$. By using this approach we can rewrite system \eqref{invsystem} as a system of forms in $\v$. This is the goal of the present subsection. 

\medskip
We adopt the following notation:
\begin{itemize}
\item $J$ is the complex structure on $\v$ induced by $(\omega,\Omega_+)$;

\vspace{0.1cm}
\item $*_\v$ is the Hodge star operator on $\v$ determined by the metric $\omega(\cdot,J\cdot)$ and the orientation $\omega^3$;

\vspace{0.1cm}
\item $\Omega_-\coloneqq *_\v \Omega_+$.
\end{itemize}

From \eqref{varphi}, we obtain 
\begin{equation}\label{eq:psi6}
\psi=*\varphi=\frac12 \omega\wedge\omega+\Omega_-\wedge z^\flat.    
\end{equation}

\smallskip

Since $(\omega,\Omega_+)$ is an {\rm SU}(3)-structure on  $\v$, $\Lambda ^2\v^*$ splits into orthogonal SU(3)-irreducible summands as follows: 
\begin{equation}\label{lambda2v}
\Lambda^{2}\v^*=\langle\omega\rangle \oplus \Lambda^{(2,0)+(0,2)}\v^*\oplus \Lambda^{(1,1)}_0\v^*\,,
\end{equation}
where 
$$
\langle\omega\rangle = \R\omega\,,\quad
\Lambda^{(2,0)+(0,2)}\v^*\coloneqq\{*_\v (\eta\wedge \Omega_+)\,\,:\,\,\eta\in \Lambda^1\v^*\}\,,\quad 
\Lambda^{(1,1)}_0\v^*\coloneqq\{\sigma\in \Lambda^2\v^*\,\,:\,\, \sigma \wedge \omega = - *_\v \sigma \}\,.
$$

\begin{rmk} The space $\Lambda^{(1,1)}_0\v^*$ can be described as the space of real $2$-forms that are of type $(1,1)$ with respect to $J$ (i.e. commuting with $J$ when viewed as skew-symmetric endomorphisms) and primitive with respect to $\omega$, i.e., their wedge product with $\omega\wedge\omega$ is zero. Similarly, the space $\Lambda^{(2,0)+(0,2)}\v^*$ is the space of $J$ anti-invariant real $2$-forms (identified with skew-symmetric endomorphisms anti-commuting with $J$). 
\end{rmk}

\begin{lemma}
    The following formulas hold:
\begin{eqnarray}
\label{|sigma|}
 *_\v(\sigma\wedge\sigma\w\omega) &=&-|\sigma|^2\,,\quad \mbox{for all $\sigma\in \Lambda^{(1,1)}_{0}\v^*$}\,,\\
\label{eta}
(*_\v(\eta\w\Omega_+))\wedge(*_\v(\eta\w\Omega_+))  &=& 2*_\v(\eta \w J\eta)\,, \\
 \label{|eta|}
|*_\v (\eta\w\Omega_+)|^2  &=&  2|\eta|^2 \,,
\end{eqnarray}
for every $\eta\in \Lambda^1\v^*$, where by definition $(J\eta)(\cdot)\coloneqq-\eta(J\cdot)$.
\end{lemma}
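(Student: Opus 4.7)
The plan is to handle (1) by a purely algebraic argument using the very definition of $\Lambda^{(1,1)}_0\v^*$, and to reduce (2) and (3) to a single explicit check in a unitary frame.

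For (1), I would begin from $\sigma\wedge\omega = -*_\v\sigma$, which is the defining relation for $\Lambda^{(1,1)}_0\v^*$. Wedging with $\sigma$ on the left yields
$$\sigma\wedge\sigma\wedge\omega \,=\, -\sigma\wedge *_\v\sigma \,=\, -|\sigma|^2\,\mathrm{vol}_\v,$$
using the standard identity $\alpha\wedge *_\v\alpha = |\alpha|^2\,\mathrm{vol}_\v$. Applying $*_\v$ to both sides produces the claim.

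For (2) and (3), the key observation is that both sides transform covariantly under the natural action of $\mathrm{SU}(3)$ on $\v^*$: the forms $\omega$, $\Omega_+$ and $\Omega_-$ are $\mathrm{SU}(3)$-invariant, while the complex structure $J$, the metric, and the Hodge star are $\mathrm{SU}(3)$-equivariant. Since $\mathrm{SU}(3)$ acts transitively on vectors of a given length in $\v$, it suffices to verify each identity for a single unit covector $\eta = e^1$ in a unitary frame $\{e^1,\ldots,e^6\}$ of $\v^*$ adapted to the structure, in which
$$\omega = e^{12}+e^{34}+e^{56},\qquad \Omega_+ = e^{135}-e^{245}-e^{146}-e^{236},\qquad Je^1 = e^2,$$
and the volume form is $e^{123456}$. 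In this frame the computation of $e^1\wedge\Omega_+$, of its Hodge dual, and of the quadratic expressions appearing in (2) and (3) is entirely mechanical; one finds $*_\v(e^1\wedge\Omega_+) = -e^{36}-e^{45}$, from which both identities fall out.

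The main (and rather modest) obstacle is sign bookkeeping in the Hodge-star computation, which reduces to counting transpositions on the ordered tuple $(1,2,3,4,5,6)$. Once this is handled carefully, both identities follow in a couple of lines. Note also that (3) could alternatively be deduced from the isometric property of $*_\v$ combined with the computation of $|\eta\wedge\Omega_+|^2$ directly, but the unified reduction to a single frame calculation is the cleanest route.
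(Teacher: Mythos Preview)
Your proof is correct. For identities \eqref{eta} and \eqref{|eta|} you use exactly the same device as the paper: the transitivity of $\mathrm{SU}(3)$ on unit covectors reduces the verification to a single computation with $\eta=e^1$ in a unitary frame, and your explicit value $*_\v(e^1\wedge\Omega_+)=-e^{36}-e^{45}$ is right.

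For identity \eqref{|sigma|}, however, your argument differs from the paper's and is in fact cleaner. The paper invokes the irreducibility of $\Lambda^{(1,1)}_0\v^*$ and Schur's Lemma to conclude that $*_\v(\sigma\wedge\sigma\wedge\omega)$ is a universal constant times $|\sigma|^2$, and then determines that constant by plugging in a particular $\sigma$ (e.g.\ $e^{12}-e^{34}$). You bypass this by wedging the defining relation $\sigma\wedge\omega=-*_\v\sigma$ with $\sigma$ and using $\sigma\wedge*_\v\sigma=|\sigma|^2\,\mathrm{vol}_\v$ directly. Your route is more elementary and yields the constant without any representation-theoretic input; the paper's route, on the other hand, generalizes more readily to analogous identities where the defining relation of the subspace is less explicit.
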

\begin{proof}
   For every {\rm SU}(3)-structure $(g,J,\omega,\Omega_+)$ on a $6$-dimensional vector space $\v$, there exists an oriented orthonormal basis $\{e_1,\ldots,e_6\}$ of $\v$ with dual basis $\{e^1,\ldots,e^6\}$ such that 
   \begin{equation}\label{su3-st}
       J(e_{2i-1})=e_{2i},\qquad\omega=e^{12}+e^{34}+e^{56},\qquad\Omega_+=e^{135} -e^{245} -e^{146}-e^{236}\,.\end{equation}  
   In order to check \eqref{|sigma|}, we notice that the left hand side is proportional to $|\sigma|^2$ by the Schur Lemma, 
   since $\Lambda^{(1,1)}_0\v^*$ is an irreducible ${\rm SU}(3)$ representation. 
   In order to find the proportionality factor, we pick an arbitrary $\sigma\in \Lambda^{(1,1)}_0\v^*$, e.g. $\sigma=e^{12}-e^{34}$.
   
   In order to check the other two relations, we use the fact that ${\rm SU}(3)$ acts transitively on the spheres of $\v$, so for every $\eta\in\v^*$ one can choose the above basis so that $\eta=ce^1$, for some $c\in\R$. Then both formulas can be checked directly from \eqref{su3-st}.
\end{proof}
\medskip

Notice that any $r$-form $\gamma\in\Lambda^r\n^*$ can be written as $\gamma=\gamma_1+\gamma_2\wedge z^\flat$,  for some $\gamma_1\in \Lambda^{r}\v^*$ and $\gamma_2\in \Lambda^{r-1}\v^*$. 
Since $z\in \n'$ and $\n$ is $2$-step nilpotent, we have that $\alpha\coloneqq dz^\flat$ lies in $\Lambda^2\v^*$, so it decomposes according to the splitting $\Lambda^2\v^*=  \langle\omega\rangle \oplus \langle\omega\rangle^\perp$ as follows
\begin{equation}\label{alpha}
\alpha \coloneqq dz^\flat=  b\omega + \alpha_0. 
\end{equation}
In particular, $\alpha_0\in \Lambda^{(2,0)+(0,2)}_0\v^*\oplus \Lambda^{(1,1)}_0\v^*$. 

\begin{rmk}\label{rem:heisenberg}
The  2-form $\alpha\in\Lambda^2\mathfrak{v}^*$ is nonzero, since $\n$ is not abelian, and its rank  determines the isomorphism class of $\n$: $\n\cong \mathfrak{h}_3\oplus \R^4$ if $\mathrm{rank}(\alpha)=2$, 
$\n\cong \mathfrak{h}_5\oplus \R^2$ if $\mathrm{rank}(\alpha)=4$ and $\n\cong \mathfrak{h}_7$ if $\mathrm{rank}(\alpha)=6$. 
\end{rmk}

\begin{lemma}\label{lemma1}
Consider the $\mathrm{G}_2$-structure $\varphi$ given by \eqref{varphi}.
Then, denoting as before $\lambda\coloneqq \frac{7}{12}\tau_0$, one has  $b=2\lambda$ in the expression \eqref{alpha} of $\alpha$. 
In addition, the condition $\tau_2=0$ implies $\tau_1=0$ and is equivalent to 
$$
 \alpha	\in \Lambda^{(1,1)}\v^*= \R\omega\oplus \Lambda_{0}^{(1,1)}\v^*\,. 
$$
\end{lemma}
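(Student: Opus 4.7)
The plan is to compute $d\varphi$ and $d\psi$ directly from the $2$-step nilpotent structure and match the result against the torsion decomposition \eqref{g2torsion}. Since $\n'=\langle z\rangle$ is one-dimensional, every $\eta\in\v^*$ annihilates $[\n,\n]=\langle z\rangle$ and is therefore closed, so the exterior differential on $\n^*$ is entirely determined by $dz^\flat=\alpha\in\Lambda^2\v^*$. From the expressions $\varphi=\omega\wedge z^\flat+\Omega_+$ and $\psi=\tfrac{1}{2}\omega^2+\Omega_-\wedge z^\flat$ we immediately obtain
\[
d\varphi=\omega\wedge\alpha,\qquad d\psi=-\Omega_-\wedge\alpha,
\]
both living purely in $\Lambda^\bullet\v^*$.

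Writing $\alpha=b\omega+\sigma+\mu$ with $\sigma\in\Lambda^{(1,1)}_0\v^*$ and $\mu\in\Lambda^{(2,0)+(0,2)}\v^*$, I would invoke the standard $\mathrm{SU}(3)$-identities on $\v$: $\omega\wedge\sigma=-{*_\v}\sigma$, $\omega\wedge\mu={*_\v}\mu$, and $\Omega_-\wedge\omega=\Omega_-\wedge\sigma=0$ (the last two hold by type, since both products would yield $(4,1)+(1,4)$-forms on a three-dimensional complex space). Substituting yields the clean expressions $d\varphi=b\,\omega^2-{*_\v}\sigma+{*_\v}\mu$ and $d\psi=-\Omega_-\wedge\mu$.

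For the equivalence $\tau_2=0\Leftrightarrow\alpha\in\Lambda^{(1,1)}\v^*$ and the vanishing of $\tau_1$, I would decompose $\tau_1=\tau_1^\v+cz^\flat$. If $\tau_2=0$, then $d\psi=4\tau_1\wedge\psi$; extracting the $z^\flat$-component of this identity leaves $4\tau_1^\v\wedge\Omega_-+2c\omega^2=0$ in $\Lambda^4\v^*$. The two summands lie in distinct $\mathrm{SU}(3)$-irreducible submodules of $\Lambda^4\v^*$ (the trivial one generated by $\omega^2$, and the one Hodge-dual to $\Lambda^{(2,0)+(0,2)}\v^*$), so each vanishes; Schur's lemma applied to the $\mathrm{SU}(3)$-equivariant map $\eta\mapsto\eta\wedge\Omega_-$ on $\v^*$, which is nonzero on a sample vector, then forces $\tau_1^\v=0$, while $c=0$ is immediate. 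Hence $\tau_1=0$. The $\Lambda^5\v^*$-component of $d\psi=4\tau_1\wedge\psi$ now reduces to $\Omega_-\wedge\mu=0$, and the analogous Schur argument applied to $\mu\mapsto\Omega_-\wedge\mu\colon\Lambda^{(2,0)+(0,2)}\v^*\to\Lambda^5\v^*$ yields $\mu=0$, i.e.\ $\alpha\in\Lambda^{(1,1)}\v^*$. Conversely, if $\mu=0$ then $d\psi=0$, and decomposing the identity $d\psi=4\tau_1\wedge\psi-*\tau_2$ into the orthogonal $\mathrm{G}_2$-summands $\Omega^5_7$ and $\Omega^5_{14}$ forces both pieces to vanish, hence $\tau_1=0$ and $\tau_2=0$.

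For the identification $b=2\lambda$, I would project the torsion identity $d\varphi=\tau_0\psi+3\tau_1\wedge\varphi+*\tau_3$ onto $\R\psi=\Omega^4_1$. Since $3\tau_1\wedge\varphi\in\Omega^4_7$ and $*\tau_3\in\Omega^4_{27}$ are orthogonal to $\psi$, one has $\langle d\varphi,\psi\rangle=\tau_0|\psi|^2=7\tau_0$. On the other hand, pairing $d\varphi=b\omega^2-{*_\v}\sigma+{*_\v}\mu$ with $\psi=\tfrac{1}{2}\omega^2+\Omega_-\wedge z^\flat$, only $\langle b\omega^2,\tfrac{1}{2}\omega^2\rangle=\tfrac{b}{2}|\omega^2|^2=6b$ survives, by $\mathrm{SU}(3)$-orthogonality of the summands in $\Lambda^4\v^*$ and because $d\varphi$ has no $z^\flat$-component. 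Comparing yields $b=\tfrac{7}{6}\tau_0=2\lambda$. The main obstacle is step three, where carefully tracking the $\mathrm{SU}(3)$-isotypic decompositions of $\Lambda^4\v^*$ and $\Lambda^5\v^*$ is what allows the separation of the individual summands; the rest reduces to routine linear algebra and Hodge identities.
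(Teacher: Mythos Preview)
Your proof is correct and follows essentially the same approach as the paper: compute $d\varphi=\omega\wedge\alpha$ and $d\psi=-\Omega_-\wedge\alpha$, extract $b$ by projecting onto $\Omega^4_1$ (the paper phrases this as $\varphi\wedge d\varphi=7\tau_0*1$, which is the same inner-product identity), and then read off $\tau_1=0$ and $\mu=0$ from the two components of $d\psi=4\tau_1\wedge\psi$. The only minor differences are cosmetic: the paper first observes from the explicit formula $12\tau_1=*(\varphi\wedge*d\varphi)$ that $\tau_1\in\v^*$ (so your $c=0$ is immediate there), and it states $\tau_1\wedge\Omega_-=0\Rightarrow\tau_1=0$ and $\Omega_-\wedge\alpha=0\Rightarrow\alpha\in\Lambda^{(1,1)}\v^*$ without explicitly invoking Schur, whereas you spell out the representation-theoretic justification and treat the converse direction more carefully.
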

\begin{proof}
Using the expressions \eqref{varphi} and \eqref{eq:psi6}, we obtain
$$
d\varphi=\omega \wedge dz^\flat = \omega\wedge\alpha\,,\qquad d \psi=-\Omega_-\wedge dz^\flat= -\Omega_-\wedge \alpha\,.
$$
Hence 
$$
7\tau_0 *1 = d\varphi\wedge \varphi =\omega\wedge\omega\wedge dz^\flat\wedge z^\flat = 6b*1. 
$$
From this, the first assertion follows.

Now, the first equation in \eqref{g2torsion} gives $12\tau_1=*(\varphi\wedge *d\varphi)$, which in the present situation implies 
$$
\tau_1=\frac{1}{12}*\left((\omega\wedge z^\flat+\Omega_+)\wedge *(\omega\wedge dz^\flat)\right).
$$
This shows that $\tau_1 \in \Lambda^1\v^*$. The condition $\tau_2=0$ is equivalent to  $d\psi=4\tau_1\wedge\psi$, i.e., to
\begin{equation}\label{d*f}
d\psi=4\tau_1\wedge*\varphi=4\tau_1\wedge\left( \frac12 \omega\wedge\omega+\Omega_-\wedge z^\flat\right).
\end{equation}
Since $d\psi = -\Omega_-\wedge \alpha \in \Lambda^{4}\v^*$, \eqref{d*f} gives 
$$
\tau_1\wedge\Omega_-=0\,,
$$
which in turn implies that 
$
\tau_1=0,
$
so that $\varphi$ is coclosed. Now, from 
$$
0= d \psi= -\Omega_-\wedge \alpha, 
$$
we deduce that $\alpha\in \Lambda^{(1,1)}\v^*$. 
\end{proof}

Lemma \ref{lemma1} proves Proposition \ref{coclosed} when $\dim(\n')=1$: every $\G_2T$-structure  $\f\in\Lambda^3\n^*$ is coclosed.  
Moreover, it shows that 
\[
\alpha = dz^\flat =  2\lambda\omega + \alpha_0,  
\]
with $\alpha_0\in \Lambda^{(1,1)}_0\v^*$.

\medskip   
We next focus on the torsion form $H_{\varphi}$ of $\varphi$ and write its differential in terms of $\alpha_0$, $\omega$ and $\lambda$. 
\begin{lemma}\label{l34}
Assume that the $\mathrm{G}_2$-structure $\varphi$ given by \eqref{varphi} has $\tau_2=0$. Then  
\[
H_\f = 2\lambda\varphi -(4\lambda \omega-\alpha_0)\wedge z^{\flat},
\]
and
$$
dH_{\varphi}=\alpha_0\wedge\alpha_0-4 \lambda^2\omega\wedge\omega\,.
$$
\end{lemma}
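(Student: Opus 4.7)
\medskip
\noindent\textbf{Proof plan for Lemma \ref{l34}.} The plan is first to derive the closed-form expression for $H_\varphi$ from the general formula in Remark \ref{tau2tau1}, and then to differentiate it term by term, exploiting the fact that all forms in $\Lambda^*\v^*$ are closed when $\dim(\n')=1$.

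For the first identity, I start from Lemma \ref{lemma1}: since $\tau_2=0$ forces $\tau_1=0$, Remark \ref{tau2tau1} gives
$H_\varphi=\tfrac{7}{6}\tau_0\,\varphi-*d\varphi=2\lambda\,\varphi-*d\varphi$,
where I used $\lambda=\tfrac{7}{12}\tau_0$. Next I compute $d\varphi$ directly from \eqref{varphi}: since $d\omega=0$ and $d\Omega_+=0$ (because $\omega,\Omega_+\in\Lambda^*\v^*$ and $\n'\subseteq\langle z\rangle$ kills the Chevalley–Eilenberg differential on $\Lambda^*\v^*$), one gets $d\varphi=\omega\wedge dz^\flat=\omega\wedge\alpha=2\lambda\,\omega\wedge\omega+\omega\wedge\alpha_0$. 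Using the two standard identities on the $6$-dimensional SU(3)-module $\v$, namely $\omega\wedge\omega=2*_{\v}\omega$ and $\omega\wedge\alpha_0=-*_{\v}\alpha_0$ (the latter from the very definition of $\Lambda^{(1,1)}_{0}\v^*$), this becomes $d\varphi=4\lambda *_{\v}\omega-*_{\v}\alpha_0$. Taking the Hodge star of $\n=\v\oplus\langle z\rangle$, which on $\Lambda^k\v^*$ acts as $*\beta=(*_{\v}\beta)\wedge z^\flat$, and using $*_{\v}*_{\v}=\mathrm{id}$ in dimension $6$, I obtain $*d\varphi=(4\lambda\,\omega-\alpha_0)\wedge z^\flat$. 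Substituting into $H_\varphi=2\lambda\varphi-*d\varphi$ yields the first displayed formula.

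For the second identity, I differentiate $H_\varphi=2\lambda\,\varphi-(4\lambda\,\omega-\alpha_0)\wedge z^\flat$. The key simplification is again that $d(\omega)=d(\alpha_0)=0$, so
\[
d\bigl((4\lambda\,\omega-\alpha_0)\wedge z^\flat\bigr)=(4\lambda\,\omega-\alpha_0)\wedge dz^\flat=(4\lambda\,\omega-\alpha_0)\wedge(2\lambda\,\omega+\alpha_0).
\]
Combining with $d\varphi=2\lambda\,\omega\wedge\omega+\omega\wedge\alpha_0$ computed above, I expand
\[
dH_\varphi=2\lambda\bigl(2\lambda\,\omega\wedge\omega+\omega\wedge\alpha_0\bigr)-\bigl(8\lambda^2\,\omega\wedge\omega+2\lambda\,\omega\wedge\alpha_0-\alpha_0\wedge\alpha_0\bigr),
\]
where I used $\alpha_0\wedge\omega=\omega\wedge\alpha_0$. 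The cross terms in $\omega\wedge\alpha_0$ cancel and the $\omega\wedge\omega$ coefficients combine to give $4\lambda^2-8\lambda^2=-4\lambda^2$, leaving exactly $dH_\varphi=\alpha_0\wedge\alpha_0-4\lambda^2\,\omega\wedge\omega$, as required.

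The steps are all routine once Lemma \ref{lemma1} is in hand; the only place where one must be careful is the sign bookkeeping in the identities $\omega\wedge\alpha_0=-*_{\v}\alpha_0$ and $*(\beta)=(*_{\v}\beta)\wedge z^\flat$, and in the Leibniz rule $d(\eta\wedge z^\flat)=d\eta\wedge z^\flat+\eta\wedge dz^\flat$ for the even-degree form $\eta=4\lambda\,\omega-\alpha_0$. This is the only potential source of error; everything else is straightforward expansion.
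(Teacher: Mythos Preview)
Your proof is correct and follows essentially the same route as the paper. The only cosmetic difference is that you invoke Remark \ref{tau2tau1} directly to write $H_\varphi=2\lambda\varphi-*d\varphi$, whereas the paper computes $\tau_3$ explicitly from $*\tau_3=d\varphi-\tau_0\psi$ and then substitutes into $H_\varphi=\tfrac{1}{6}\tau_0\varphi-\tau_3$; these are equivalent manipulations of the same torsion identities, and both reduce to evaluating $*(\omega\wedge\alpha)=(4\lambda\omega-\alpha_0)\wedge z^\flat$ and then differentiating.
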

\begin{proof}
From Lemma \ref{lemma1} we have $\tau_1=0$, so from \eqref{g2torsion} we get:
$$
*\tau_3=d	\varphi-\frac{12}{7}\lambda *\varphi=\omega\wedge \alpha-\frac{12}{7}\lambda *\varphi,
$$
and thus
$$
\begin{aligned}
\tau_3=&\,*(\omega \wedge \alpha)-\frac{12}{7}\lambda \varphi=(-\alpha_0+4\lambda \omega)\wedge z^{\flat} -\frac{12}{7}\lambda \varphi.
\end{aligned}
$$
We then obtain 
$$
\begin{aligned}
H_\varphi&=\frac{2}{7}\lambda\varphi -\tau_3=\frac{2}{7}\lambda\varphi -(-\alpha_0+4\lambda \omega)\wedge z^{\flat} +\frac{12}{7}\lambda \varphi=2\lambda\varphi -(-\alpha_0+4\lambda \omega)\wedge z^{\flat}\,,
\end{aligned}
$$
and therefore
$$
\begin{aligned}
d H_\varphi	&=2\lambda\omega \wedge \alpha -(-\alpha_0+4\lambda \omega)\wedge\alpha 
		=-2\lambda\omega \wedge (\alpha_0+2\lambda\omega) + \alpha_0\wedge (\alpha_0+2\lambda\omega)\\
		&= -4\lambda^2\omega\wedge\omega+\alpha_0\wedge\alpha_0\,,
\end{aligned}
$$
as required. 
\end{proof}

In order to study the instanton equation, we prove the following. 
\begin{lemma}\label{instanton}
Consider the $\mathrm{G}_2$-structure $\varphi$ given by \eqref{varphi}. 
A $2$-form $F=F_{\v}+\eta\w z^{\flat}\in \Lambda^2\n^*=\Lambda^2\v^*\oplus \v^*\wedge z^{\flat}$ satisfies  
$$
\begin{cases}
dF=0 \,,\\
F\wedge \psi=0\,, 
\end{cases}
$$ 
if and only if  
$$
\eta\w\alpha=0\,,\quad {\mbox and }\quad F_\v={\frac12} *_\v (\eta\wedge \Omega_+)+\sigma\,,\quad \sigma \in 	\Lambda^{(1,1)}_0\v^*\,. 
$$
In particular, if $\mathrm{rank}(\alpha)\in\{4,6\}$, then $\eta=0$ and $F = F_\v \in \Lambda^{(1,1)}_0\v^*$. 
\end{lemma}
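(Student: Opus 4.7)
The plan is to compute $dF$ and $F\wedge\psi$ directly from the decompositions $F = F_\v + \eta\wedge z^\flat$ and $\psi = \tfrac12\,\omega\wedge\omega + \Omega_-\wedge z^\flat$ (see \eqref{eq:psi6}), and then to read off the constraints on $F_\v$ and $\eta$ using the SU(3)-decomposition \eqref{lambda2v} of $\Lambda^2\v^*$.

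For the first equation, since $d\ee^i = 0$ for a basis of $\v^*$ and $dz^\flat = \alpha \in \Lambda^2\v^*$, a routine computation gives $dF = -\eta\wedge\alpha$, so $dF = 0$ is equivalent to $\eta\wedge\alpha = 0$. For the second equation, expanding $F\wedge\psi$ and using $z^\flat\wedge z^\flat = 0$ should yield
\[
F\wedge\psi \;=\; \tfrac12\, F_\v\wedge\omega\wedge\omega \;+\; \Big(F_\v\wedge\Omega_- + \tfrac12\,\eta\wedge\omega\wedge\omega\Big)\wedge z^\flat,
\]
where the two summands sit in the complementary subspaces $\Lambda^6\v^*$ and $\Lambda^5\v^*\wedge z^\flat$ of $\Lambda^6\n^*$, and hence must vanish separately.

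Vanishing of the first summand forces $F_\v$ to be $\omega$-primitive, so by \eqref{lambda2v} one may write $F_\v = *_\v(\mu\wedge\Omega_+) + \sigma$ with $\mu\in\v^*$ and $\sigma\in\Lambda^{(1,1)}_0\v^*$. Two representation-theoretic identities are then needed: (a) $\sigma\wedge\Omega_- = 0$ for every $\sigma\in\Lambda^{(1,1)}_0\v^*$, and (b) $*_\v(\mu\wedge\Omega_+)\wedge\Omega_- = -\mu\wedge\omega\wedge\omega$ for every $\mu\in\v^*$. Identity (a) follows from Schur's lemma, since $\sigma\mapsto\sigma\wedge\Omega_-$ is an SU(3)-equivariant map from the adjoint representation $\Lambda^{(1,1)}_0\v^*\cong\mathfrak{su}(3)$ into $\Lambda^5\v^*\cong\v^*$, the standard representation, and these are inequivalent irreducibles. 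Identity (b) also holds up to a scalar by Schur, and the scalar $-1$ is pinned down by evaluating both sides on $\mu = e^1$ in the adapted basis \eqref{su3-st}. Substituting into the second summand then yields $(\tfrac12\,\eta - \mu)\wedge\omega\wedge\omega = 0$, and since $\eta\mapsto\eta\wedge\omega\wedge\omega$ is an isomorphism $\v^*\to\Lambda^5\v^*$ (hard Lefschetz for the symplectic form $\omega$), one gets $\mu = \tfrac12\,\eta$, i.e., $F_\v = \tfrac12\,*_\v(\eta\wedge\Omega_+) + \sigma$.

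For the final assertion, Remark \ref{rem:heisenberg} tells us that $\mathrm{rank}(\alpha)\in\{4,6\}$ corresponds to $\n\cong\mathfrak{h}_5\oplus\R^2$ or $\n\cong\mathfrak{h}_7$, so $\alpha$ admits a normal form $e^1\wedge e^2+e^3\wedge e^4$ or $e^1\wedge e^2+e^3\wedge e^4+e^5\wedge e^6$. I will then verify directly that the map $\v^*\to\Lambda^3\v^*$, $\eta\mapsto\eta\wedge\alpha$, is injective in both cases (the rank-$6$ case is the standard Lefschetz map for a symplectic form on $\v$; the rank-$4$ case reduces to inspecting a short list of monomials). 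Hence $\eta\wedge\alpha = 0$ forces $\eta = 0$, so $F = F_\v$ and its $\Lambda^{(2,0)+(0,2)}\v^*$-part $\tfrac12\,*_\v(\eta\wedge\Omega_+)$ vanishes, giving $F\in\Lambda^{(1,1)}_0\v^*$. The main obstacle will be pinning down identity (b) with the correct scalar $-1$, as this is precisely what forces the coefficient $\tfrac12$ in the stated form of $F_\v$.
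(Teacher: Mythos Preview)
Your proposal is correct and follows essentially the same approach as the paper's proof: compute $dF$ to get $\eta\wedge\alpha=0$, split $F\wedge\psi$ into its $\Lambda^6\v^*$ and $\Lambda^5\v^*\wedge z^\flat$ parts, use the primitivity condition to write $F_\v=*_\v(\mu\wedge\Omega_+)+\sigma$, and then apply the identity $*_\v(\mu\wedge\Omega_+)\wedge\Omega_-=-\mu\wedge\omega\wedge\omega$ to extract $\mu=\tfrac12\eta$. Your justifications via Schur's lemma for identities (a) and (b) and your explicit injectivity argument for the rank-$4$ and rank-$6$ cases are slightly more detailed than what the paper records, but the logic is identical.
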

\begin{proof}
Since both $F_\v$ and $\eta$ are closed, the condition $dF=0$ is equivalent to
$$
\eta\wedge \alpha =0\,, 
$$
which implies $\eta=0$ if $\mathrm{rank}(\alpha)\in\{4,6\}$. 
Next we impose $F\wedge \psi=0$. Since 
$$
F\wedge \psi =\frac12 F_{\v}\wedge \omega\wedge\omega+\frac12 \eta\wedge z^{\flat}\wedge \omega \wedge\omega +F_\v\wedge \Omega_-\wedge z^{\flat}\,,
$$
we get 
$$
F\wedge \psi=0 \iff 
\begin{cases}
F_\v \wedge \omega\wedge\omega=0\,,\\
F_\v\wedge \Omega_-=-\frac12\eta\wedge\omega\wedge\omega\,. 
\end{cases}
$$
The first equation shows that one can write $F_\v=*_\v (x\wedge \Omega_+)+\sigma$ for some $\sigma \in\Lambda^{(1,1)}_0\v^*$ and $x\in \v^*$. Using the general formula $*_\v (x\wedge \Omega_+)\wedge\Omega_-=-x\wedge\omega\wedge\omega$, we get $x=\frac12\eta$.
\end{proof}

\begin{lemma}\label{Flemma}
Assume that the $\mathrm{G}_2$-structure $\varphi$ given by \eqref{varphi} has $\tau_2=0$, 
and let $H_\f$ be the corresponding torsion $3$-form. 
Let $\epsilon_1,\dots,\epsilon_k \in \R\smallsetminus\{0\}$. Then the $2$-forms $F^{r}=F^r_{\v}+\eta^r\w z^{\flat}\in \Lambda^2\n^*$, $r=1,\dots, k$, 
satisfy    
\begin{equation}
\label{Fsystem0}
\begin{cases}
dF^r=0\quad \mbox{ for every }r\,,\\
F^r\wedge \psi=0\quad \mbox{ for every }r\,,\\
dH_{\varphi}=\sum_{r=1}^k\epsilon_{r}\, F^{r}\wedge F^{r}\,,
\end{cases}
\end{equation}
if and only if 
\begin{equation}\label{s4}
\begin{cases}
\eta^r\w \alpha=0,\\
\sum_{r=1}^k \epsilon_r*_\v (\eta^r\wedge \Omega_+)\w\sigma^r=0=\sum\epsilon_r(*_\v (\eta^r\w\Omega_+))\wedge(*_\v (\eta^r\w\Omega_+)),\\
 \alpha_0\wedge\alpha_0-4\lambda^2  \omega\wedge\omega = 	\sum_{r=1}^k \epsilon_r \sigma^r\wedge\sigma^r, \\
\sum_{r=1}^k  \epsilon_r\left( *_\v (\eta^r\w\Omega_+)\w \eta^r+2\sigma^r \wedge \eta^r\right)=0\,,
\end{cases}
\end{equation}
where $\sigma^r$ is the component of $F^r$ in $\Lambda^{(1,1)}_0\v^*$ according to the decomposition \eqref{lambda2v}. 
\end{lemma}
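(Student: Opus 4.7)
My approach is to translate each of the three conditions of \eqref{Fsystem0} into the appropriate component(s) of \eqref{s4} using the \textup{SU}(3)-module decomposition on $\v$ induced by $(\omega,\Omega_+)$. First, I would apply Lemma \ref{instanton} to the first two equations of \eqref{Fsystem0}, $dF^r=0$ and $F^r\wedge\psi=0$. This yields directly the first equation $\eta^r\wedge\alpha=0$ of \eqref{s4}, together with the canonical splitting
$$F^r_\v=\tfrac12\,*_\v(\eta^r\wedge\Omega_+)+\sigma^r,\qquad \sigma^r\in\Lambda^{(1,1)}_0\v^*,$$
which I would fix throughout the rest of the argument.

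By Lemma \ref{l34}, the right-hand side of the Bianchi identity is $dH_\varphi=\alpha_0\wedge\alpha_0-4\lambda^2\,\omega\wedge\omega\in\Lambda^4\v^*$, with no $z^\flat$-component. Expanding
$$F^r\wedge F^r = F^r_\v\wedge F^r_\v + 2\,F^r_\v\wedge\eta^r\wedge z^\flat,$$
the third equation of \eqref{Fsystem0} splits along $\Lambda^4\n^*=\Lambda^4\v^*\oplus(\Lambda^3\v^*\wedge z^\flat)$ into two independent equations. The $\Lambda^3\v^*\wedge z^\flat$-part reads $\sum_r\epsilon_r F^r_\v\wedge\eta^r=0$, and upon substituting the decomposition of $F^r_\v$ this is exactly the last equation of \eqref{s4} (up to the overall factor of $2$).

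For the $\Lambda^4\v^*$-part, I would expand
$$F^r_\v\wedge F^r_\v=\tfrac14(*_\v(\eta^r\wedge\Omega_+))^{\wedge 2}+*_\v(\eta^r\wedge\Omega_+)\wedge\sigma^r+\sigma^r\wedge\sigma^r,$$
and organize the terms using the \textup{SU}(3)-decomposition
$$\Lambda^4\v^*=\R\omega^2\oplus(\v^*\wedge\Omega_+)\oplus(\omega\wedge\Lambda^{(1,1)}_0\v^*).$$
The mixed term $*_\v(\eta^r\wedge\Omega_+)\wedge\sigma^r$ is of type $(3,1)+(1,3)$ and thus sits in $\v^*\wedge\Omega_+$, whereas the two quadratic contributions, together with the right-hand side (a $(2,2)$-form, since $\alpha_0\in\Lambda^{(1,1)}_0\v^*$ by Lemma \ref{lemma1}), lie in the $(2,2)$-piece. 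Projecting onto $\v^*\wedge\Omega_+$ immediately yields $\sum_r\epsilon_r*_\v(\eta^r\wedge\Omega_+)\wedge\sigma^r=0$, the first half of the second equation of \eqref{s4}.

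The main obstacle is then the separation of the $(2,2)$-projection into the two independent identities $\sum_r\epsilon_r(*_\v(\eta^r\wedge\Omega_+))^{\wedge 2}=0$ and $\alpha_0\wedge\alpha_0-4\lambda^2\omega^2=\sum_r\epsilon_r\sigma^r\wedge\sigma^r$, since $(*_\v(\eta^r\wedge\Omega_+))^{\wedge 2}=2*_\v(\eta^r\wedge J\eta^r)$ (by \eqref{eta}) and $\sigma^r\wedge\sigma^r$ a priori both contribute to the $\R\omega^2$ and $\omega\wedge\Lambda^{(1,1)}_0\v^*$ summands. I expect the separation to be carried out by explicit use of identities \eqref{|sigma|} and \eqref{eta}, together with the structural constraints coming from the first and fourth equations of \eqref{s4}. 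The converse implication is then a straightforward summation: the two halves of the second equation and the third equation of \eqref{s4} add up to reconstruct the $\Lambda^4\v^*$-part of the Bianchi identity, the fourth equation reconstructs the $\Lambda^3\v^*\wedge z^\flat$-part, and the first equation together with the fixed form of $F^r_\v$ recovers the instanton conditions via Lemma \ref{instanton}.
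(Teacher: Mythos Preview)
Your outline follows the paper's approach almost exactly: use Lemma~\ref{instanton} for the first two conditions, Lemma~\ref{l34} for $dH_\varphi$, split the Bianchi identity along $\Lambda^4\v^*\oplus(\Lambda^3\v^*\wedge z^\flat)$, and then by $(p,q)$-type on $\Lambda^4\v^*$. You correctly isolate the only nontrivial step, namely the separation of the $(2,2)$-equation into the two independent identities $\sum_r\epsilon_r(*_\v(\eta^r\wedge\Omega_+))^{\wedge 2}=0$ and $\alpha_0\wedge\alpha_0-4\lambda^2\omega\wedge\omega=\sum_r\epsilon_r\sigma^r\wedge\sigma^r$.

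However, your treatment of this step is only an expectation, not an argument, and the tools you list are not quite the ones that work. Identity \eqref{|sigma|} plays no role here; the paper instead proceeds as follows. Wedge the fourth equation of \eqref{s4} with $\Omega_+$: since $\sigma^r\in\Lambda^{(1,1)}_0\v^*$ one has $\sigma^r\wedge\Omega_+=0$, and \eqref{|eta|} then gives $\sum_r\epsilon_r|\eta^r|^2=0$. Next, use the first equation $\eta^r\wedge\alpha=0$. If $\mathrm{rank}(\alpha)\geq 4$ this already forces $\eta^r=0$ and there is nothing to do. If $\mathrm{rank}(\alpha)=2$, then since $\alpha\in\Lambda^{(1,1)}\v^*$ (Lemma~\ref{lemma1}) one also has $\alpha\wedge J\eta^r=0$, so $\eta^r\wedge J\eta^r=c_r\alpha$ for some $c_r\in\R$; pairing with $\omega$ and using $\alpha=2\lambda\omega+\alpha_0$ with $\alpha_0$ primitive yields $|\eta^r|^2=6\lambda c_r$. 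If $\lambda=0$ this forces all $\eta^r=0$; if $\lambda\neq0$, the identity $\sum_r\epsilon_r|\eta^r|^2=0$ gives $\sum_r\epsilon_r c_r=0$, hence $\sum_r\epsilon_r\,\eta^r\wedge J\eta^r=0$, and \eqref{eta} then yields $\sum_r\epsilon_r(*_\v(\eta^r\wedge\Omega_+))^{\wedge 2}=0$. This case analysis on $\mathrm{rank}(\alpha)$, and the role of $\lambda$ in linking $|\eta^r|^2$ to $c_r$, are the missing ingredients in your proposal. Once this is in place, your converse direction is correct.
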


\begin{proof}
Lemma \ref{instanton} implies that each $F^{r}$ can be written as  
$$
F^r=\frac12*_\v (\eta^r\wedge \Omega_+)+\sigma^r +\eta^r\wedge z^{\flat}\,, 
$$
with $\eta^r\w\alpha=0$.
It follows 
$$
F^r\w F^r		=	*_\v (\eta^r\w\Omega_+)\w \eta^r\w z^{\flat}+2\sigma^r \w\eta^r\w z^{\flat} 
				+*_\v (\eta^r\wedge \Omega_+)\w\sigma^r + \frac14(*_\v (\eta^r\w\Omega_+))\wedge(*_\v (\eta^r\w\Omega_+))+\sigma^r\wedge\sigma^r. 
$$
By Lemma \ref{l34}, the last equation of system \eqref{invsystem} is equivalent to the system 
$$
\begin{cases}
 \alpha_{0}\wedge\alpha_0-4\lambda^2   \omega\wedge\omega  = 	\sum_{r=1}^k \epsilon_r\left(\frac14(*_\v (\eta^r\w\Omega_+))\wedge(*_\v (\eta^r\w\Omega_+))+\sigma^r\wedge\sigma^r 
 									+*_\v (\eta^r\wedge \Omega_+)\w\sigma^r \right),\\
0=\sum_{r=1}^k  \epsilon_r\left( *_\v (\eta^r\w\Omega_+)\w \eta^r+2\sigma^r \wedge \eta^r\right)\,.
\end{cases}
$$
Since the forms 
$$
\alpha_{0}\wedge\alpha_0\,,\quad  \omega\wedge\omega\,,\quad (*_\v (\eta^r\w\Omega_+))\wedge(*_\v (\eta^r\w\Omega_+))\,,\quad \sigma^r\wedge\sigma^r 
$$
belong to $\Lambda^{(2,2)}\v^*$, while 
$$
*_\v (\eta^r\wedge \Omega_+)\w\sigma^r
$$
belongs to $\Lambda^{(3,1)+(1,3)}\v^*$, the above system splits into 
\begin{equation}\label{s3}
\begin{cases}
0=\sum_{r=1}^k \epsilon_r*_\v (\eta^r\wedge \Omega_+)\w\sigma^r,\\
 \alpha_{0}\wedge\alpha_0-4\lambda^2   \omega\wedge\omega = 	\sum_{r=1}^k \epsilon_r\left(\frac14(*_\v (\eta^r\w\Omega_+))\wedge(*_\v (\eta^r\w\Omega_+))+\sigma^r\wedge\sigma^r \right),\\
0=\sum_{r=1}^k  \epsilon_r\left(*_\v (\eta^r\w\Omega_+)\w \eta^r+2\sigma^r \wedge \eta^r\right)\,.
\end{cases}
\end{equation}
Taking the wedge product of the last equation with $\Omega_+$ and using $\sigma^r\w\Omega_+=0$ and  \eqref{|eta|}  we get   
\begin{equation}\label{normeta}
\sum_{r=1}^k\epsilon_r|\eta^r|^2=0\,.
\end{equation}

Next we show that $\sum_{r=1}^k \epsilon_r(*_\v (\eta^r\w\Omega_+))\wedge(*_\v (\eta^r\w\Omega_+))=0$. We can assume that $\mathrm{rank}(\alpha)=2$ since otherwise $\eta^r=0$ for all $r$. From the fact that $\alpha\wedge \eta^r=0$ and $\alpha\in\Lambda^{(1,1)}\v^*$, we also get $\alpha\wedge J\eta^r=0$ for every $r$, thus showing that for every $r$ there exists $c_r$ with $c_r\alpha=\eta^r\wedge J\eta^r$. Taking the scalar product with $\omega$ yields 
\begin{equation}\label{cr}|\eta^r|^2=6\lambda c_r\,,\end{equation} so from \eqref{normeta} we obtain $\lambda\sum\epsilon_r c_r=0$.  Note that we can assume $\lambda\ne 0$ since otherwise \eqref{cr} yields $\eta^r=0$ for all $r$. Therefore, we get
\begin{equation}\label{e4}
\sum_{r=1}^k\epsilon_r \eta^r\wedge J\eta^r=0.
\end{equation}
On the other hand, using \eqref{eta}, we have that  \eqref{e4} yields 
\begin{equation}\label{e5}
    \sum\epsilon_r(*_\v (\eta^r\w\Omega_+))\wedge(*_\v (\eta^r\w\Omega_+))=0, 
\end{equation}
showing that the middle equation in the system \eqref{s3} decouples into \eqref{e5} and 
\begin{equation}\label{equation}
\alpha_{0}\wedge\alpha_0-4\lambda^2   \omega\wedge\omega = 	\sum_{r=1}^k \epsilon_r\sigma^r\wedge\sigma^r. 
\end{equation}
Hence the claim follows.
\end{proof}

\begin{rmk}
We observe that Equation \eqref{equation} has no solution if all $\epsilon_r$'s are negative. Indeed, 
wedging both sides of equation \eqref{equation} by $\omega$, we obtain 
$$
-|\alpha_0|^2-24\lambda^2=-\sum_{r=1}^k \epsilon_r|\sigma^r|^2\,.
$$
If all $\epsilon_r$'s are negative, we then get 
$|\alpha_0|=\lambda=0$, whence $\alpha=0$ by \eqref{alpha} and Lemma \ref{lemma1}, contradicting the assumption that $\n$ is not abelian. 
This is consistent with Corollary \ref{cor:solvunimodnd}.
\end{rmk}

\smallskip 

If $\eta^k=0$, for all $1\leq k\leq r$, then the system \eqref{s4} reduces to Equation \eqref{equation}. 
This happens, for instance, when $\mathrm{rank}(\alpha)\in\{4,6\}$ by Lemma \ref{instanton}. 
In this case,  in order to find solutions to the ${\rm G}_2$-system \eqref{eq:HetG2sys}, we can focus on Equation \eqref{equation} on $\R^6$ equipped with 
the standard basis $\mathcal{B}=\{e_1,\ldots,e_6\}$ and the standard ${\rm SU}(3)$-structure 
\begin{equation}\label{stdSU(3)}
\omega=e^{12}+e^{34}+e^{56}\,,\quad 
\Omega_+=e^{135}-e^{146}-e^{236}-e^{245}\,,
\end{equation}
and use the next result. 

\begin{prop}\label{R6} 
Consider the vector space $\R^6$ endowed with the standard ${\rm SU}(3)$-structure \eqref{stdSU(3)}. 
Assume that $\alpha_{0},\sigma^1,\dots,\sigma^k\in \Lambda^{(1,1)}_0\,(\R^6)^*$, $\lambda\in \R$, $\epsilon_1\dots,\epsilon_k\in \R\smallsetminus\{0\}$, 
is a solution to \eqref{equation}, that $2\lambda\omega + \alpha_0\neq0$, 
and that the forms $2\lambda\omega + \alpha_0$ and $\sigma^1,\ldots,\sigma^k$ are integral with respect to the standard basis $\mathcal{B}$ of $\R^6$. 
Then, this solution gives rise to a $2$-step nilmanifold endowed with a $k$-torus bundle 
and admitting an invariant solution to the ${\G}_2$-system \eqref{eq:HetG2sys}.
\end{prop}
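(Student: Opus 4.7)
The plan is to turn the linear–algebraic data $(\alpha_0,\sigma^1,\dots,\sigma^k,\lambda,\epsilon_1,\dots,\epsilon_k)$ on $\R^6$ into the ingredients of a $7$-dimensional Lie algebra equipped with a $\G_2$-structure and invariant $2$-forms solving system \eqref{invsystem}, and then feed the result into the general construction described just before the proposition.

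First, I would define $\n\coloneqq \R^6\oplus \langle z\rangle$ via its Chevalley–Eilenberg algebra, declaring the dual forms $e^1,\dots,e^6$ associated to $\mathcal{B}$ to be closed and setting $dz^{\flat}=2\lambda\omega+\alpha_0\in\Lambda^2\v^*$, where $\v=\R^6$. Since the differential lands in $\Lambda^2\v^*$, this defines a $2$-step nilpotent Lie algebra with $\n'=\langle z\rangle$ (nonzero as soon as $2\lambda\omega+\alpha_0$ is, which is forced by wedging both sides of \eqref{equation} with $\omega$ when the $\sigma^r$ are not all zero). Next, I would define the $\G_2$-structure by \eqref{varphi}, which by construction induces on $\v$ exactly the standard ${\rm SU}(3)$-structure \eqref{stdSU(3)}, and I would set $F^r\coloneqq \sigma^r\in\Lambda^{(1,1)}_0\v^*$ for $1\le r\le k$.

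The three equations of \eqref{invsystem} then reduce, through the lemmas already at our disposal, to hypotheses we have built in. The condition $\tau_2=0$ is equivalent by Lemma \ref{lemma1} to $\alpha=dz^{\flat}\in\Lambda^{(1,1)}\v^*$, which holds since both $\omega$ and $\alpha_0$ are of type $(1,1)$. The closure of each $F^r$ is immediate from $F^r\in\Lambda^2\v^*$ (all $e^i$ being closed), and the instanton identity $F^r\wedge\psi=0$ is the statement of Lemma \ref{instanton} in the particular case $\eta^r=0$ and $F^r_{\v}=\sigma^r\in\Lambda^{(1,1)}_0\v^*$. Finally, Lemma \ref{l34} computes $dH_{\varphi}=\alpha_0\wedge\alpha_0-4\lambda^2\,\omega\wedge\omega$, which equals $\sum_{r=1}^k\epsilon_r\,\sigma^r\wedge\sigma^r=\sum_{r=1}^k\epsilon_r\,F^r\wedge F^r$ by the hypothesis \eqref{equation}, yielding the heterotic Bianchi identity.

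The last step is to turn this Lie-algebraic solution into a geometric one. Invoking the construction recalled just before the proposition, the integrality of $2\lambda\omega+\alpha_0$ with respect to $\mathcal{B}$ yields a basis of $\n$ with integer structure constants, hence a cocompact lattice $\Gamma\subset N=\exp(\n)$; then integrality of the closed $2$-forms $F^r=\sigma^r$ on the same basis allows one to apply Theorem \ref{Theoapp1} to obtain a principal $\mathbb{T}^k$-bundle $P\to M\coloneqq\Gamma\backslash N$ endowed with a connection $\theta$ whose curvature is $F_\theta=\sum_{r=1}^k F^r\,t_r$. Equipping $\mathfrak{k}\cong\R^k$ with the diagonal bilinear form $\langle\cdot,\cdot\rangle_{\mathfrak{k}}=\sum_r\epsilon_r\,t^r\otimes t^r$ produces the required quadruple. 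I expect the only delicate point to be the integrality/lattice bookkeeping, i.e., checking that the chosen rescaling of the basis (as in the construction preceding the statement and in Remark \ref{rem:6}) is compatible both with the integrality of $2\lambda\omega+\alpha_0$ and with that of the $\sigma^r$, so that Theorem \ref{Theoapp1} genuinely applies; everything else is a bookkeeping exercise within the already-established lemmas.
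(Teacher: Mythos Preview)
Your proposal is correct and follows essentially the same route as the paper: build $\n=\R^6\oplus\langle z\rangle$ with $dz^\flat=2\lambda\omega+\alpha_0$, take $\varphi=\omega\wedge z^\flat+\Omega_+$ and $F^r=\sigma^r$, verify \eqref{invsystem} via Lemmas~\ref{lemma1}, \ref{l34} and \ref{instanton}, and then invoke integrality together with Theorem~\ref{Theoapp1} to produce the lattice and torus bundle. The paper's proof is terser (it does not spell out the verification of each equation in \eqref{invsystem}), but the argument is the same.
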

\begin{proof}

Consider the $7$-dimensional vector space $\n\coloneqq \R^6\oplus \langle z\rangle$ and endow it with the product metric $g$ 
for which $z$ is a unit vector.
Then, imposing   
\[
dz^\flat \coloneqq 2\lambda \omega + \alpha_{0}\neq0, \qquad de^i=0,~1\leq i\leq6\,, 
\]
gives $\n$ the structure of a $2$-step nilpotent Lie algebra with $1$-dimensional derived algebra $\n'= \langle z\rangle$. 
Since the $2$-form $2\lambda \omega + \alpha_{0}$ is integral with respect to $\mathcal{B}$, the simply connected $2$-step nilpotent Lie group $N=\exp(\n)$ has a 
cocompact lattice given by $\Gamma = \exp(\mathrm{span}_\Z(6e_1,\ldots,6e_6,z))$, and $M=\Gamma\backslash N$ is a 2-step nilmanifold. 
Moreover, since the $\sigma^r$'s are integral with respect to $\mathcal{B}$, Theorem \ref{Theoapp1} ensures the existence of a principal $\mathbb{T}^k$-bundle 
$P\to M=\Gamma\backslash N$ endowed with a connection $\theta$ whose curvature is $F_\theta= \sum_{r=1}^k \sigma^rt_r$, 
where $\{t_1,\ldots,t_k\}$ is a basis of the Lie algebra of $\mathbb{T}^k$. 
The $3$-form $\varphi\coloneq \omega \wedge  z^\flat+\Omega_+$ defines a $\G_2$-structure on $\n$ inducing the product metric $g$. 
By Lemma \ref{lemma1}, $\f$ is coclosed and has $\tau_0 = \frac{12}{7}\lambda$, 
so it gives rise to a $\G_2$-structure of the same type on $M=\Gamma\backslash N$. 
If we define $\langle \cdot,\cdot\rangle_{\mathfrak{k}}\coloneqq\sum_{r=1}^k\epsilon_r t^{r}\otimes t^{r}$, 
we then obtain an invariant solution $(\varphi, P,\theta,\langle\cdot,\cdot\rangle_{\mathfrak{k}})$ 
to the $\G_2$-system \eqref{eq:HetG2sys} on $M=\Gamma\backslash N$.

\end{proof}

Motivated by the last result, we now consider a $6$-dimensional vector space $\v$ equipped with an ${\rm SU}(3)$-structure $(\omega,\Omega_+)$, 
and we focus on Equation \eqref{equation} 
$$
\alpha_{0}\wedge\alpha_0-4\lambda^2   \omega\wedge\omega = 	\sum_{r=1}^k \epsilon_r\sigma^r\wedge\sigma^r 
$$
for the unknowns $\alpha_0,\sigma^1,\ldots,\sigma^k\in\Lambda^{(1,1)}_0\v^*$, $\lambda\in \R$, and $\epsilon_1,\ldots,\epsilon_k\in\R\smallsetminus\{0\}$. 
Note that $\lambda$ and $\alpha_0$ cannot be both zero, as otherwise the Lie algebra $\n$ would be abelian.

In order to study Equation \eqref{equation}, which involves 4-forms on $\v$, we rewrite it in terms of endomorphisms of $\v$. 
To do this, we use the dual Lefschetz operator, which can we written using an orthonormal basis $\{e_1\dots,e_6\}$ of $\v$ as
$$
\Lambda=\frac12 \sum_{i=1}^6 Je_{i} \lrcorner e_{i} \lrcorner \,.
$$
This operator is an isomorphism from $\Lambda^4\v^*$ to $\Lambda^2\v^*$. 

One can easily check that $\Lambda(\omega\wedge\omega)=4\omega$ and $\Lambda(\sigma\wedge\sigma)(\cdot,\cdot)=2\langle A^2J\cdot,\cdot\rangle$, 
for every $\sigma\in \Lambda^{(1,1)}_0\v^*$, 
where $A$ is the skew-symmetric endomorphism commuting with $J$ defined by $\sigma(\cdot,\cdot)=\langle A\cdot,\cdot\rangle$.  
Applying $\Lambda$, Equation \eqref{equation} can be rewritten in the following, equivalent, form 
\begin{equation}\label{A_0}
L^2_0-\sum_{r=1}^k \epsilon_r L_r^2 =-8 \lambda^2 {\rm Id}\,,
\end{equation}
where the $L_r$'s are the trace-free symmetric endomorphisms commuting with $J$ defined by 
$$
\alpha_0(\cdot,\cdot)=\langle L_0J\cdot,\cdot \rangle\,,\quad \sigma^r(\cdot,\cdot)=\langle L_rJ\cdot,\cdot \rangle\,.
$$

\medskip 

We now discuss some properties of the solutions of \eqref{A_0} that will be useful in the proof of Theorem \ref{main1}.
Assume that one of the $\epsilon_r$'s, say $\epsilon_1$, is positive, and all the others are negative. In this case, by setting 
\begin{equation}\label{eq:ABr}
A= \sqrt{\frac{\epsilon_1}{8}}\, L_1\,,\quad B_1= \sqrt{\frac{1}{8}}\, L_0\,,\quad B_r= \sqrt{\frac{-\epsilon_r}{8}}\, L_r\,,\quad 2\leq r \leq k\,, 
\end{equation}
we can rewrite \eqref{A_0} as  
\begin{equation}\label{A2}
A^2=\lambda^2\,{\rm Id}+\sum_{r=1}^k B_r^2\,.
\end{equation}
\begin{prop}\label{implieslambda=0}
Let $A,B_1,\ldots,B_k$ be trace-free symmetric endomorphisms of a $6$-dimensional Hermitian vector space $(\v,J)$ commuting with $J$  
and satisfying Equation \eqref{A2}. Then $\lambda=0$ and the endomorphisms $A^2,B_1^2,\ldots,B_k^2$ are positively collinear. 
\end{prop}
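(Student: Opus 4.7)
I would view $A, B_1,\ldots,B_k$ as traceless complex Hermitian $3\times 3$ matrices on $(\v, J)\cong\C^3$: symmetry together with commutation with $J$ yields $\C$-linearity and Hermiticity, and the real trace-free condition becomes complex tracelessness for such operators. Writing $a_1,a_2,a_3$ for the real eigenvalues of $A$, the constraint $a_1+a_2+a_3=0$ forces the \emph{triangle identity} $|a_{\max}|=|a_{\mathrm{mid}}|+|a_{\min}|$ by a short sign analysis; equivalently, the three eigenvalues of $A^2$ satisfy the degenerate triangle equality $\sqrt{\rho_{\max}}=\sqrt{\rho_{\mathrm{mid}}}+\sqrt{\rho_{\min}}$.

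The heart of the proof is the following spectral inequality for sums of squares of traceless Hermitian $3\times 3$ matrices: if $M\coloneqq\sum_r B_r^2$ has complex eigenvalues $\nu_1\geq\nu_2\geq\nu_3\geq 0$, then $\sqrt{\nu_1}\leq\sqrt{\nu_2}+\sqrt{\nu_3}$. To prove it one combines two ingredients. First, for any traceless complex Hermitian $3\times 3$ matrix $T$, Newton's identities with $e_1=0$ and $e_2=-\tfrac12\operatorname{tr}_\C(T^2)$ yield $\operatorname{tr}_\C(T^4)=\tfrac12(\operatorname{tr}_\C(T^2))^2$. Second, the Hilbert--Schmidt Cauchy--Schwarz inequality gives $\operatorname{tr}_\C(B_r^2B_s^2)\leq\sqrt{\operatorname{tr}_\C(B_r^4)\operatorname{tr}_\C(B_s^4)}=\tfrac12\operatorname{tr}_\C(B_r^2)\operatorname{tr}_\C(B_s^2)$. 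Summing these bounds in $\operatorname{tr}_\C(M^2)=\sum_r\operatorname{tr}_\C(B_r^4)+2\sum_{r<s}\operatorname{tr}_\C(B_r^2B_s^2)$ produces $\operatorname{tr}_\C(M^2)\leq\tfrac12\operatorname{tr}_\C(M)^2$, which rewritten in terms of the eigenvalues $\nu_i$ is algebraically equivalent to $\sqrt{\nu_1}\leq\sqrt{\nu_2}+\sqrt{\nu_3}$.

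Both conclusions of the proposition then fall out together. Since $A^2$ and $M=A^2-\lambda^2\mathrm{Id}\geq 0$ share their eigenspaces, the triangle identity for $A^2$ becomes
\[
\sqrt{\lambda^2+\nu_1}=\sqrt{\lambda^2+\nu_2}+\sqrt{\lambda^2+\nu_3}.
\]
The function $f(x)\coloneqq\sqrt{x+\nu_1}-\sqrt{x+\nu_2}-\sqrt{x+\nu_3}$ is strictly decreasing on $[0,\infty)$, and by the spectral inequality $f(0)\leq 0$; hence $f(\lambda^2)=0$ forces both $\lambda=0$ and $f(0)=0$, i.e.\ equality throughout the spectral inequality. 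Tracing back, equality requires $\operatorname{tr}_\C(B_r^2B_s^2)=\tfrac12\operatorname{tr}_\C(B_r^2)\operatorname{tr}_\C(B_s^2)$ for every pair, and the Cauchy--Schwarz equality case then forces $B_r^2$ and $B_s^2$ to be non-negative scalar multiples of each other; hence $A^2, B_1^2,\ldots, B_k^2$ are all positively collinear.

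The main obstacle I anticipate is the clean passage from the spectral equality to operator-level (not merely norm-level) proportionality of the $B_r^2$'s, which relies on using Cauchy--Schwarz equality for the Hilbert--Schmidt inner product on PSD Hermitian matrices. The crucial dimensional ingredient is the identity $\operatorname{tr}_\C(T^4)=\tfrac12(\operatorname{tr}_\C(T^2))^2$, which holds precisely because we are in complex dimension $3$; in other dimensions the corresponding spectral inequality would fail and a genuinely different argument would be required.
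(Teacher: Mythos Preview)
Your proof is correct and rests on the same two ingredients as the paper's: the Newton-type identity $\operatorname{tr}_{\C}(T^4)=\tfrac12(\operatorname{tr}_{\C}(T^2))^2$ for traceless Hermitian $3\times 3$ matrices, and Cauchy--Schwarz for the Hilbert--Schmidt inner product. The organization, however, is genuinely different. The paper squares the relation $A^2=\lambda^2\mathrm{Id}+\sum_r B_r^2$, takes traces, and compares $\operatorname{tr}(A^4)$ computed two ways to obtain the single algebraic identity
\[
3\lambda^4+\lambda^2\sum_r\operatorname{tr}(B_r^2)+\tfrac14\sum_{r\neq s}\operatorname{tr}(B_r^2)\operatorname{tr}(B_s^2)-\sum_{r\neq s}\operatorname{tr}(B_r^2 B_s^2)=0,
\]
from which $\lambda=0$ and the Cauchy--Schwarz equality follow at once. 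You instead separate the argument into (i) the triangle identity $\sqrt{\rho_{\max}}=\sqrt{\rho_{\mathrm{mid}}}+\sqrt{\rho_{\min}}$ for the eigenvalues of $A^2$, (ii) the spectral inequality $\sqrt{\nu_1}\le\sqrt{\nu_2}+\sqrt{\nu_3}$ for $M=\sum_r B_r^2$, and (iii) a monotonicity argument in the shift $\lambda^2$. Your packaging is more geometric and isolates a clean intermediate statement (the spectral triangle inequality for sums of squares of traceless Hermitian matrices), at the cost of a slightly longer chain of equivalences; the paper's route is shorter and purely trace-algebraic. Both arguments recover the collinearity of the $B_r^2$ in the same way, from the equality case of Cauchy--Schwarz on positive semidefinite operators.
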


\begin{proof}
Composing $A^2$ with itself and using \eqref{A2}, we get
\begin{equation}\label{A4}
A^4=\lambda^4 {\rm Id}+2\lambda^2 \sum_{r=1}^k B_r^2+\sum_{r=1}^k B_r^4+\sum_{r\neq s} B_r^2 B_s^2\,.
\end{equation}

Next, we observe that if $X$ is a trace-free symmetric endomorphism, then  
\begin{equation}\label{X}
{\rm tr}\, (X^4)=\frac{({\rm tr}(X^2))^2}{4}.
\end{equation}
Indeed, if $x_1,x_2,-x_1-x_2\in \R$ are the eigenvalues of $X$ (all of algebraic multiplicity $2$), we have 
$$
{\rm tr}\, (X^2)=2(x_1^2+x_2^2+(x_1+x_2)^2)=4(x_1^2+x_2^2+x_1x_2)
$$
and 
$$
\begin{aligned}
{\rm tr}\, (X^4)=&\,2(x_1^4+x_2^4+(x_1+x_2)^4)=4(x_1^4+x_2^4+2x_1^3x_2+2x_1x_2^3+3x_1^2x_2^2)=4(x_1^2+x_2^2+x_{1}x_2)^2\,,
\end{aligned}
$$
from which \eqref{X} follows. Taking the trace of both sides of \eqref{A4} and applying \eqref{X}, we obtain:
 $$
 \frac{({\rm tr}(A^2))^2}{4}={\rm tr}(A^4)=6\lambda^4+2\lambda^2 \sum_{r=1}^k {\rm tr}\,(B_r^2) +\sum_{r=1}^k {\rm tr}\,(B_r^4)+\sum_{r\neq s} {\rm tr}(B_r^2B_{s}^2)\,.
 $$
Moreover, from \eqref{A2} and \eqref{X}, we have 
 $$
  \begin{aligned}
\frac{ ({\rm tr}(A^2))^2}{4}=&\frac14 \left({\rm tr} \left(\lambda^2{\rm Id}+\sum_{r=1}^k  B_r^2\right)\right)^2
 =9\lambda^4+3\lambda^2\sum_{r=1}^k {\rm tr}(B_r^2)+\frac14 \sum_{r=1}^k ({\rm tr} (B_r^2))^2+\frac14\sum_{r\neq s} {\rm tr}(B_r^2){\rm tr}(B_{s}^2)\\
 &= 9\lambda^4+3\lambda^2\sum_{r=1}^k {\rm tr}(B_r^2)+\sum_{r=1}^k {\rm tr} (B_r^4)+\frac{1}{4}\sum_{r\neq s} {\rm tr}(B_r^2){\rm tr}(B_{s}^2)\,.
 \end{aligned}
 $$
 Comparing these two equations yields
\begin{equation}\label{eql}
 \begin{aligned}
3\lambda^4+\lambda^2\sum_{r=1}^k {\rm tr}\,(B_r^2)+\frac{1}{4}\sum_{r\neq s} {\rm tr}(B_r^2){\rm tr}(B_{s}^2)-\sum_{r\neq s} {\rm tr}(B_r^2B_{s}^2)=0\,.
 \end{aligned}
\end{equation}
  Now, we observe that \eqref{X} and the Cauchy-Schwarz inequality imply:  
 $$
 {\rm tr}(B_s^2B_r^2)\leq \sqrt{{\rm tr}(B_r^4)}\,\sqrt{{\rm tr}(B_s^4)}=\frac14 
 {\rm tr}(B_r^2){\rm tr}(B_s^2)\,. 
 $$
so \eqref{eql} yields
$$
3\lambda^4+\lambda^2\sum_{r=1}^k {\rm tr}(B_r^2)\leq 0
 $$
 which forces $\lambda=0$. 

Moreover, the equality case in the Cauchy-Schwarz inequality implies that the symmetric endomorphisms $B_r^2$ are positively collinear for $1\leq r\leq k$.  
Finally, since $A^2=\sum_{r=1}^kB_r^2$, the same conclusion holds for $A^2$ as well. 
\end{proof}

We apply the previous proposition in our case.
Since $\lambda=0$, the 2-form $\alpha_0$ is equal to the 2-form $\alpha=dz^\flat$, which is non-zero. 
Thus the endomorphism $B_1$ defined in \eqref{eq:ABr} must be non-zero. Consequently, there exist $a_2,\dots, a_k\in \R$ such that 
$$
B_r^2=a_r^2B_1^2\,,
$$
for all $2\leq r \leq k$, and therefore
\[
A^2 = \left(1+\sum_{r=2}^ka_r^2\right) B_1^2. 
\]

\begin{lemma}\label{lb}
Let $A$ and $B$ be  trace-free symmetric endomorphisms of $\v$ commuting with the complex structure $J$ and satisfying 
$$
A^2=a^2B^2,
$$
for a some $a\in \R$. If $B$ is invertible, then $A=\pm aB$.
 \end{lemma}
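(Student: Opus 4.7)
The plan is to reduce the claim to a short case analysis on the eigenspace decomposition of the common square $A^{2}=a^{2}B^{2}$, after passing to the Hermitian viewpoint on $\v$.

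The case $a=0$ is immediate: then $A^{2}=0$ and symmetry of $A$ forces $A=0=\pm aB$. Assume henceforth $a\neq 0$. Replacing $B$ by $aB$ (which is again symmetric, commutes with $J$, is trace-free and invertible) reduces the statement to proving $A=\pm B$ whenever $A^{2}=B^{2}$. View $\v$ as a $3$-dimensional complex vector space via $J$, endowed with the Hermitian form $h\coloneqq g+i\omega$. A direct computation using $g$-symmetry and $JA=AJ$ shows that $A$ and $B$ are $h$-Hermitian; since a Hermitian endomorphism has real complex trace and $\mathrm{tr}_{\R}=2\,\mathrm{tr}_{\C}$ in this setting, the trace-free assumption over $\R$ is equivalent to $\mathrm{tr}_{\C}(A)=\mathrm{tr}_{\C}(B)=0$.

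Set $D\coloneqq A^{2}=B^{2}$. As $B$ is invertible, $D$ is positive definite, and since $A$ and $B$ each commute with $D$, they preserve the eigenspace decomposition $\v=\bigoplus_{\mu}E_{\mu}$ with $\mu>0$. On every $E_{\mu}$, both $A$ and $B$ are Hermitian and satisfy $A^{2}=B^{2}=\mu\,\mathrm{Id}$, so they diagonalise with spectrum contained in $\{+\sqrt{\mu},-\sqrt{\mu}\}$. I then distinguish the three possible partitions of $3=\dim_{\C}\v$ by the dimensions $d_{\mu}=\dim_{\C}E_{\mu}$. The partition $d=3$ is excluded by parity, since the trace of a Hermitian operator on a $3$-dimensional space with eigenvalues $\pm\sqrt{\mu}$ has the parity of $3$ and cannot vanish. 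In the partition $3=1+1+1$, order $\sqrt{\mu_{1}}<\sqrt{\mu_{2}}<\sqrt{\mu_{3}}$ and write $A|_{E_{\mu_{i}}}=\varepsilon_{i}\sqrt{\mu_{i}}$ with $\varepsilon_{i}\in\{\pm 1\}$; the equation $\sum_{i}\varepsilon_{i}\sqrt{\mu_{i}}=0$ together with the strict ordering of the $\sqrt{\mu_{i}}$ admits only the two sign patterns $\varepsilon=\pm(+,+,-)$, and the same holds for $B$, so $A=\pm B$. In the partition $3=2+1$, if $A|_{E_{\mu_{1}}}$ had trace zero on the $2$-dimensional eigenspace, the global trace of $A$ would equal the nonzero $1$-dimensional contribution, contradicting $\mathrm{tr}\,A=0$; hence $A|_{E_{\mu_{1}}}=\pm\sqrt{\mu_{1}}\,\mathrm{Id}$, and the vanishing of the total trace then fixes $A|_{E_{\mu_{2}}}$. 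Applying the same to $B$ leaves two global possibilities, in both of which $A=\pm B$.

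The main subtlety lies in the partition $3=2+1$: a priori the Hermitian involution $\sqrt{\mu_{1}}^{-1}A|_{E_{\mu_{1}}}$ could have mixed spectrum on the $2$-dimensional eigenspace, and the crucial observation is that the trace-free condition on $A$ combined with the positivity of $\sqrt{\mu_{2}}$ rules this out and forces the restriction to be a scalar multiple of the identity.
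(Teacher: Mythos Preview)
Your argument is correct and takes a genuinely different route from the paper's. The paper stays in the real $6$-dimensional picture: it diagonalises $B$ over $\R$, writes $A$ in $2\times 2$ block form $\begin{pmatrix}X&-Y\\Y&X\end{pmatrix}$ with $X$ symmetric and $Y$ skew, and analyses the resulting system $X^2-Y^2=D^2$, $XY+YX=0$ directly. A kernel argument shows $X$ is invertible; then, since a trace-free invertible $3\times 3$ matrix cannot have both $\lambda$ and $-\lambda$ as eigenvalues, the anticommutation relation forces $Y=0$; finally a spectral comparison gives $X=\pm D$. Your approach instead complexifies from the outset, observing that $A$ and $B$ become Hermitian on the $3$-dimensional Hermitian space $(\v,J)$, and then runs an eigenspace analysis of the positive operator $D=A^2=B^2$ combined with the complex trace constraint. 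This is more conceptual and avoids the block-matrix computations entirely; the paper's method, on the other hand, never needs to discuss the partition of $3$ or the implicit relation $\sqrt{\mu_2}=2\sqrt{\mu_1}$ in your $2+1$ case (which is forced by the existence of the trace-free $B$, but which you use without making explicit). Both arguments are of comparable length, and yours transfers more readily to other complex dimensions.
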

 \begin{proof}
 If $a=0$ the statement is obvious. We can then focus on the case $a\neq0$, where it is not restrictive assuming $a=1$, so that $A^2=B^2$. 
 One can find a basis of $\v$ such that the matrices representing $B$ and $J$ with respect to it are the following:  
 $B=\begin{pmatrix}
     D&0_3\\0_3&D
 \end{pmatrix}$, 
 where 
 $D=\begin{pmatrix}d_1&0&0\\0&d_2&0\\0&0&d_3
 \end{pmatrix}$ 
 with $d_1+d_2+d_3=0$ and $d_1d_2d_3\neq0$,
 and 
 $J=\begin{pmatrix}
     0_3&-I_3\\I_3&0_3
 \end{pmatrix}$.
 Since $A$ is symmetric, trace-free, and commutes with $J$, it is represented by a block-diagonal matrix of the form 
 $A=\begin{pmatrix}
     X&-Y\\Y&X
 \end{pmatrix}$, 
 where $X$ is a symmetric and trace-free $3\times 3$ matrix, and $Y$ is a skew-symmetric $3\times 3$ matrix.
The condition $A^2=B^2$ reads then 
\begin{equation}\label{sy}
    \begin{cases}
   X^2-Y^2=D^2,\\
   XY+YX=0_3.
\end{cases}
\end{equation}
 If $X=0$, the first equation of \eqref{sy} shows that $Y$ is invertible, which is impossible for a skew-symmetric $3\times 3$ matrix. Since $X$ is trace-free, its kernel can be at most 1-dimensional. 
 Assume that $\ker(X)$ is a line. The second equation of the system \eqref{sy} shows that $Y$ preserves the kernel of $X$, and since $Y$ is skew-symmetric, it must vanish on $\ker(X)$. 
 This contradicts the first equation, since $D$ would then also vanish on $\ker(X)$. Therefore, $X$ is invertible. 
 
 Since ${\rm tr}(X)=0$, if $\lambda$ is an eigenvalue of $X$, then $-\lambda$ is not an eigenvalue of $X$. 
The second equation of \eqref{sy} thus shows that $Y$ vanishes on the eigenspaces of $X$, so $Y=0_3$ and $X^2=D^2$. The spectrum of $X$ is therefore $\{\varepsilon_1 d_1,\varepsilon_2 d_2,\varepsilon_3 d_3\}$, where $\epsilon_i\in\{\pm1\}$. Since $X$ and $D$ are trace-free, all signs $\varepsilon_i$ are equal. If $D^2$ has three different eigenvalues, clearly $X$ preserves the three eigenspaces, so it has diagonal form in the given basis, whence $X=\pm D$. If $D^2$ has a double eigenvalue, then one can assume $d_1=d_2$ and $d_3=-2d_1$, so $X$ has block-diagonal form $X=\begin{pmatrix}
     U&0\\0&\varepsilon d_3,
 \end{pmatrix}$, where $\varepsilon\in\{\pm1\}$ and $U$ is a $2\times 2$ symmetric matrix satisfying ${\rm tr}(U)=-\varepsilon d_3=2\varepsilon d_1$ and $U^2=d_1^2I_2$. Then clearly $U=\varepsilon d_1 I_2$, so $X=\varepsilon D$, and thus $A=\varepsilon B$.
\end{proof}

We are now ready to give the proof of Theorem \ref{main1}.

\subsection{Proof of Theorem \ref{main1}-(i)} 

If $M=\Gamma\backslash N$ admits an invariant solution $(\f,P,\theta,\langle\cdot,\cdot\rangle_{\mathfrak{k}})$ to the $\G_2$-system \eqref{eq:HetG2sys} 
and $\dim(\n')=1$, then the above discussion shows that there exists a $g_\f$-orthogonal decomposition $\n=\v\oplus\n'$, with $\n'=\langle z\rangle$ and 
$dz^\flat = \alpha = 2\lambda\omega + \alpha_0$,
where $\alpha_0\in\Lambda^{(1,1)}_0\v^*$ is a primitive form of type $(1,1)$. If $\lambda=0$, then $\alpha=\alpha_0$, so  $\alpha\wedge\alpha\neq 0$ by \eqref{|sigma|}. Consequently, $\n$ cannot be isomorphic to $\mathfrak h_3\oplus \R^4$, and thus $\n\cong{\mathfrak h_{5}\oplus \R^{2}}$ if $\mathrm{rank}(\alpha)=4$ 
or $\n\cong{\mathfrak h_{7}}$ if $\mathrm{rank}(\alpha)=6$. 
Note that, in both cases, the $\G_2$-system \eqref{eq:HetG2sys} reduces to Equation \eqref{equation} by Lemma \ref{instanton}, as 
all of the $\eta^r$'s appearing in \eqref{s4} must be zero.

\medskip 
\subsection{Proof of Theorem \ref{main1}-(ii)}

By Proposition \ref{R6}, if we consider $\R^6$ endowed with the standard SU(3)-structure, then a solution to Equation \eqref{equation} with 
$\lambda=0$, $k=k_+=1$, $\alpha_0,\sigma^1\in\Lambda^{(1,1)}_0(\R^6)^*$ integral $2$-forms with respect to the standard basis of $\R^6$, and $\mathrm{rank}(\alpha_0)=4$ gives rise to a solution to the $\G_2$-system \eqref{eq:HetG2sys} on a 2-step nilmanifold endowed with a principal $S^1$-bundle 
and with corresponding Lie algebra $\n\cong\mathfrak{h}_5\oplus\mathbb{R}^2$.  
The $S^1$-bundle is non-trivial if the curvature form $F=\sigma^1$ is not exact, namely, by Nomizu's theorem \cite{Nomizu}, if $\alpha_0$ and $\sigma^1$ are not proportional. 

Equation \eqref{equation} is equivalent to \eqref{A_0}, which now reads
\begin{equation}\label{eq:L0L1Thm1ii}
   L_0^2 = \epsilon_1 L_1^2, 
\end{equation}
where $L_0$ and $L_1$ are trace-free symmetric endomorphisms of $\R^6$ commuting with $J$ such that $\alpha_0 = \langle L_0J\cdot,\cdot\rangle$ and $\sigma^1 = \langle L_1J\cdot,\cdot\rangle$. 
We can take $\epsilon_1=1$ and search for solutions for which $L_0$ has rank $4$ and $L_1$ is not proportional to $L_0$.

A solution is given, for instance, by the endomorphisms represented by the following integer matrices with respect to the standard basis of $\R^6:$
\[
L_0 = \mathrm{diag}\left(5,5,-5,-5,0,0\right),
\qquad
L_1 =
\begin{pmatrix}
0&0&3&4&0&0\\
0&0&-4&3&0&0\\
3&-4&0&0&0&0\\
4&3&0&0&0&0\\
0&0&0&0&0&0\\
0&0&0&0&0&0
\end{pmatrix}. 
\]
The corresponding 2-forms are 
\[
\alpha_0 = 5\,e^{12}-5\,e^{34},\qquad  F=\sigma^1= -4\,e^{13} + 3\,e^{14} - 3\,e^{23} - 4\,e^{24}.
\]

We can now conclude as in the proof of Proposition \ref{R6}. For the reader's convenience, we sketch the argument here.  
The $7$-dimensional $2$-step nilpotent Lie algebra is obtained considering the vector space 
$\n \coloneqq \R^6\oplus\langle z\rangle$  
endowed with the product metric $g$ for which $z$ has unit length, and setting 
\[
de^i=0,~1\leq i \leq 6, \qquad dz^\flat = \alpha_0 = 5\,e^{12}-5\,e^{34}.   
\]
The coclosed $\G_2$-structure on $\n$ is defined by the $3$-form $\f = \omega\w z^\flat+\Omega_+$, which induces the metric $g$. 
Since both $\alpha_0$ and the closed $2$-form $F$ are integral with respect to the basis $\{e_1,\ldots,e_6,z\}$ of $\n$,
we can apply Theorem \ref{Theoapp1} and obtain the desired solution on the $2$-step nilmanifold $M=\Gamma\backslash N$, where $N=\exp(\n)$ and 
$\Gamma=\exp(\mathrm{span}_\Z(6e_1,6e_2,6e_3,6e_4,6e_5,6e_6,z))$.

\medskip 
\subsection{Proof of Theorem \ref{main1}-(iii)}
We now study the existence of solutions to equation \eqref{equation} satisfying the requirements of Proposition \ref{R6} 
and such that $\mathrm{rank}(\alpha_0)=6$, so that $\n\cong\mathfrak{h}_7$. 
By Proposition \ref{implieslambda=0}, if we take either $k=k_+=1$ or $k>1$ and $k_+=1$, then every solution must have $\lambda=0$. 
Without loss of generality, we may assume  $\epsilon_1>0$ and, if $k>1$, $\epsilon_2,\dots,\epsilon_{k}<0$. 
We can then rewrite equation \eqref{equation} as \eqref{A2} with $\lambda=0$, i.e., 
$$
A^2=\sum_{r=1}^k B_r^2\,,
$$
where $A$ and $B_r$'s are the trace-free symmetric endomorphisms of $\R^6$ commuting with $J$ defined in \eqref{eq:ABr}. 
If $\mathrm{rank}(\alpha_0)=6$, the endomorphism $B_1$ must be invertible. 
In particular, $A$ must be non-zero, so the component $F^1=\sigma^1$ of the curvature is non-zero. 
Combining Proposition \ref{implieslambda=0} and Lemma \ref{lb} (or applying just the latter if $k=k_+=1$), 
we obtain that $A$, and $B_r$, for $2\leq r \leq k$, in \eqref{A2}, 
are proportional to $B_1$, 
showing that all curvature forms $F^r$ are proportional to the exact 2-form $\alpha=dz^\flat$. 
It follows that every solution to the $\mathrm{G}_2$-system \eqref{eq:HetG2sys}  
has trivial torus bundle and non-zero curvature if $k_+=1$.

\smallskip

Assume now that $\lambda=0$ and $k_-=0$. Then, Equation \eqref{A_0} becomes
\[
L_0^2 = \sum_{r=1}^k\epsilon_r L_r^2, 
\]
with $\epsilon_r>0$, for $1\leq r \leq k$. 
Since $L_0$ is invertible, at least one of the endomorphisms $L_r$ is non-zero, so at least one component of the curvature is non-zero. 
Moreover, Proposition \ref{implieslambda=0} implies that the endomorphisms $L_0^2,\epsilon_1 L_1^2,\ldots,\epsilon_k L_k^2$ are positively collinear.   
We then have 
\[
L_r^2 = \frac1{\epsilon_r}\,a_r^2 L_0^2,
\]
for certain $a_r\in\R$, $1\leq r\leq k$, with at least one of them different from zero. 
By Lemma \ref{lb}, the endomorphisms $L_r$ are either zero or non-zero multiples of $L_0$. 
This shows that all non-zero curvature forms $F^r$ are proportional to the exact 2-form $\alpha=dz^\flat$, whence it follows that  
every solution to the $\mathrm{G}_2$-system \eqref{eq:HetG2sys} has trivial torus bundle and non-zero curvature.

\smallskip

To conclude the proof, it is sufficient to construct an example with $k_+=2$, $k_-=1$ and non-trivial torus bundle. 
This is equivalent to finding a suitable solution to Equation \eqref{A_0} with $k=3$, with two positive and one negative $\epsilon_r$, $1\leq r\leq3$, 
and such that at least one of the curvature forms $F^r = \sigma^r=\langle L_rJ\cdot,\cdot \rangle$ is not exact. 
Without loss of generality, we assume $\epsilon_1,\epsilon_2>0$, $\epsilon_3<0$, and look for solutions of the equation
\[
L_0^2-\epsilon_3 L_3^2 = \epsilon_1 L_1^2 + \epsilon_2 L_2^2, 
\]
such that $\mathrm{rank}(L_0)=6$ and at least one of the endomorphisms $L_1,L_2,L_3$ is not proportional to $L_0$. 
We can take, for instance, $\epsilon_1 = \epsilon_2 = - \epsilon_3 = 1$ and the endomorphisms represented by the following matrices with respect to 
the standard basis of $\R^6$
\[
L_0 = L_2 =  \mathrm{diag}\left(1,1,-2,-2,1,1\right),\qquad L_1=L_3 = \mathrm{diag}\left(1,1,-1,-1,0,0\right).
\]
They correspond to the following $2$-forms
\[
\alpha_0 = \sigma^2 = e^{12} -2\,e^{34} + e^{56}, \qquad 
\sigma^1 = \sigma^3 = e^{12} - e^{34}.
\]
In particular, the curvature forms $F^r=\sigma^r$, $1\leq r\leq 3,$ are integral with respect to the basis $\{e_1,\ldots,e_6,z\}$ and $F^1,F^3$ are not exact. 
We can now conclude as in the proof of Proposition \ref{R6}.

\medskip 

\subsection{Proof of Theorem \ref{main1}-(iv)}
Proposition \ref{implieslambda=0} implies that there are no invariant solutions to the ${\rm G}_2$-system \eqref{eq:HetG2sys} 
with $k_+=1$ and $\lambda\neq 0$ on a $2$-step nilmanifold with $\dim(\n')=1$. 

To conclude the proof of Theorem \ref{main1}-(iv) it is then sufficient to show that when $k=k_+=2$, 
the ${\rm G}_2$-system \eqref{eq:HetG2sys} can be solved for each of the three isomorphism classes of $2$-step nilpotent Lie algebras $\n$ with $\dim(\n')=1$. 
In view of Proposition \ref{R6}, the problem reduces to finding a solution $(\lambda,\alpha_0,\sigma^1,\sigma^2,\epsilon_1,\epsilon_2)$ to equation \eqref{equation} on $\R^6$ equipped the standard ${\rm SU}(3)$-structure \eqref{stdSU(3)}, where $\lambda\in\mathbb{R}\smallsetminus\{0\}$ 
and the forms $ \alpha = 2\lambda\omega + \alpha_0,\sigma^1,\sigma^2$ are integral. 
The rank of the form $\alpha = 2\lambda \omega + \alpha_0$ then  determines the isomorphism class of the corresponding Lie algebra.  

\medskip 
A straightforward computation shows that we have the following solutions: 

\begin{itemize}
\item 
$\lambda =\frac12\,,\alpha_0=-e^{12}-e^{34}+2\,e^{56}, \sigma^1 = e^{12}-e^{56},\,  \sigma^2 = e^{34}-e^{56},\,
        \epsilon_1=\epsilon_2=3$. \vspace{0.1cm} \\
        In this case $\alpha=3e^{56}$, so the corresponding Lie algebra is isomorphic to $\mathfrak{h}_3\oplus\R^4$.

\vspace{0.2cm}
\item 
$\lambda =\frac12\,,\alpha_0=-e^{12}+e^{56},\, \sigma^1 = -e^{12} + 2\,e^{34}-e^{56},\,  \sigma^2 = e^{12}-e^{56},\,\epsilon_1=\frac12\,,\epsilon_2=\frac52$. \vspace{0.1cm} \\
In this case $\alpha=e^{34}+2\,e^{56}$, so the corresponding Lie algebra is isomorphic to $\mathfrak{h}_5\oplus\R^2$.

\vspace{0.2cm}
\item 
$\lambda=\tfrac12\,,\alpha_0=0\,, \sigma^1=-2\,e^{12} + e^{34} + e^{56}\,, \sigma^2 = e^{34}-e^{56}, 
\,\epsilon_1=\frac12\,,\epsilon_2=\frac32$. \vspace{0.1cm} \\ 
In this case $\alpha= e^{12}+e^{34}+e^{56}$, so the corresponding Lie algebra is isomorphic to 
$\mathfrak{h}_7$.
\end{itemize} 
Note that in all examples both curvature forms $F^1=\sigma^1$ and $F^2=\sigma^2$ are integral 
with respect to the basis $\{e_1,\ldots,e_6,z\}$ 
and not proportional to $\alpha$, so they are not exact by Nomizu's theorem \cite{Nomizu}. 
Consequently, the solutions to the $\G_2$-system have non-trivial torus bundle.

\section{The case $\dim(\n')=2$ or $3$.}\label{2or3}
We now consider the case $\dim(\n')\geq 2$. 
We use an approach that is similar to one of the previous section, but in this case we will decompose the Lie algebra $\n$ as the orthogonal direct sum 
of a $3$-dimensional subspace calibrated by $\varphi$ and its orthogonal complement $\r$.  
The latter naturally inherits an ${\rm SU}(2)$-structure induced by $\varphi$, which can be used to rewrite \eqref{invsystem} as a system of  exterior forms on $\r$. 

\medskip 
Let $\varphi$ be a ${\rm G}_2$-structure on $\n$. If $\dim(\n')=3$, we assume that $\varphi$ calibrates $\n'$, 
that is, there exists an orthonormal basis $\{z_1,z_2,z_3\}$ of $\n'$ such that $|\varphi(z_1,z_2,z_3)|=1$.   
Recall that $\varphi(z_1,z_2,z_3)=1$ if and only if $z_1 \times_\varphi z_2 = z_3$, where $\times_\varphi$ denotes the vector cross product induced by $\varphi$.

\smallskip 

Let us consider two orthonormal vectors  $z_1,z_2\in\n'$ and let  
$$
z_3\coloneqq\varphi(z_1,z_2,\cdot)^{\sharp} = z_1 \times_\varphi z_2.
$$
The vector $z_3$ has unit length and is orthogonal to $z_1$ and $z_2$. 
The subspace $\langle z_1,z_2,z_3\rangle$ is calibrated by $\varphi$ and it is contained in the center of $\n$, but does not necessarily coincide with it. 
When $\dim(\n')=2$, we have $\n'=\langle z_1,z_2\rangle$ and we will show that $d z_3^\flat=0$.
If $\dim(\n')=3$ and $\varphi$ calibrates $\n'$, we have $\n'=\langle z_1,z_2,z_3\rangle$ and $d z_3^\flat\neq 0$. 

Let $\mathfrak{r}\coloneqq\langle z_1,z_2,z_3\rangle^{\perp}$, so that $\n= \mathfrak{r} \oplus \langle z_1,z_2,z_3\rangle$. 
Since $\G_2$ acts transitively on orthonormal pairs of vectors, there exists an adapted basis $\{e_1,\ldots,e_7\}$ of $\n$ 
(i.e. a basis where $\f$ is expressed by \eqref{standard}), such that $e_5=z_1$ and $e_6=z_2$. 
Then by \eqref{standard} we get $z_3=\varphi(z_1,z_2,\cdot)^{\sharp}=e_7$, and denoting by $\{e^1,e^2,e^3,e^4,z^1,z^2,z^3\}$ the dual basis, we have
\begin{equation}\label{eq:G2n23}
\varphi=\sum_{i=1}^3 \omega_i\w z^i+z^1\w z^2 \w z^3,
\end{equation}
where $\omega_1=e^{13}-e^{24}$, $\omega_2=-e^{14}-e^{23}$ and $\omega_3=e^{12}+e^{34}$ define an {\rm SU}(2)-structure on $\r$. 
The metric corresponding to the SU(2)-structure is the restriction of $g_\f$ to $\r$. Moreover, since $\langle z_1,z_2,z_3\rangle$ 
is calibrated by $\f$, its orthogonal complement $\r$ is calibrated by $\psi=*\f$, so it is oriented by the volume form $\psi|_{\r}$. 
We denote by $*_\r$ the Hodge operator corresponding to this metric and orientation on $\r$.  
The closed 2-forms 
$\omega_1,\omega_2,\omega_3$ are self-dual and pairwise orthogonal: 
\[
*_\r\omega_i = \omega_i,\qquad \omega_i\w\omega_j = 2\delta_{ij}*_\r1\,,\qquad \forall i,j\in\{1,2,3\}\,. 
\]
From this observation and the expression of $\f$, we deduce that
\begin{equation}\label{psi1}
\psi=*\varphi=\omega_1\w z^2\w z^3+\omega_2\w z^{3}\w z^{1}+\omega_3\w z^{1}\w z^2+*_\r1\,. 
\end{equation}

We introduce the following notation 
\[
\alpha_i\coloneqq dz^i\in\Lambda^2\r^*,\quad 1\leq i \leq 3, 
\]
and
\[
a_{ij}\coloneqq \langle \omega_i,\alpha_j\rangle = *_\r(\omega_i\w\alpha_j)\,,\quad 1\leq  i,j\leq 3\,. 
\]
If $\dim(\n')=2$, then $\alpha_1$ and $\alpha_2$ are linearly independent and $\alpha_3=0$, since $z_3$ is orthogonal to $\n'$.  
Consequently, $a_{i3}=0$ for all $1\leq i \leq 3$. 
If $\dim(\n')=3$, then $\alpha_1$, $\alpha_2$ and $\alpha_3$ are linearly independent.

\begin{lemma}\label{coclosed2}
Consider the $\mathrm{G}_2$-structure $\varphi$ given by \eqref{eq:G2n23}. Then  
\begin{equation}\label{lambda3}
\lambda=\frac{1}{6}\left(a_{11}+a_{22}+a_{33}\right).
\end{equation}
Moreover, the condition $\tau_2=0$  is equivalent to 
\begin{equation}\label{a=a}
a_{ij}=a_{ji}\,,\qquad\forall i,j\in\{1,2,3\}\,.
\end{equation}
and implies $\tau_1=0$.
\end{lemma}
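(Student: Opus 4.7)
My plan is to compute $d\f$ and $d\psi$ directly in terms of the forms $\alpha_i = dz^i \in \Lambda^2\r^*$, and then extract the intrinsic torsion forms $\tau_0,\tau_1,\tau_2$ by exploiting the bigrading of $\Lambda^\bullet \n^*$ induced by the splitting $\n^* = \r^*\oplus\langle z^1,z^2,z^3\rangle$. Since the $\omega_i$ lie in $\Lambda^2\r^*$ and all elements of $\r^*$ are closed (the Lie algebra being $2$-step nilpotent with $\r$ transverse to the derived subalgebra), we have $d\omega_i=0$; a short computation with careful sign management then yields
\[
d\f \,=\, \sum_{i=1}^3 \omega_i\wedge\alpha_i \,+\, \alpha_1\wedge z^{23}\,+\,\alpha_2\wedge z^{31}\,+\,\alpha_3\wedge z^{12},
\]
\[
d\psi \,=\, (a_{23}-a_{32})\,{*_\r 1}\wedge z^1 \,+\, (a_{31}-a_{13})\,{*_\r 1}\wedge z^2\,+\,(a_{12}-a_{21})\,{*_\r 1}\wedge z^3,
\]
using the identity $\omega_i\wedge\alpha_j = a_{ij}\,{*_\r 1}$ in $\Lambda^4\r^*$.

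For \eqref{lambda3}, the key is the identity $\f\wedge d\f = 7\tau_0\,{*1}$. This follows from \eqref{g2torsion} combined with two facts: first, $\f\wedge\f = 0$ (an equivariance argument, since $\Omega^6\cong\Omega^1$ has no $\G_2$-invariant line, so a nontrivial $\f\wedge\f$ would yield one), hence $\tau_1\wedge\f\wedge\f=0$; second, $\f\wedge{*\tau_3} = \langle\f,\tau_3\rangle\,{*1} = 0$ by orthogonality of $\Omega^3_1$ and $\Omega^3_{27}$. Substituting the expression for $d\f$ above, the only terms contributing a factor $z^{123}$ come from $z^{123}\wedge(\sum_i\omega_i\wedge\alpha_i)$ and from $\sum_i(\omega_i\wedge z^i)\wedge(\alpha_i\wedge z^{jk})$; the two contributions add up to $\f\wedge d\f = 2(a_{11}+a_{22}+a_{33})\,{*1}$, and combining with $\lambda=\tfrac{7}{12}\tau_0$ gives \eqref{lambda3}.

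For \eqref{a=a} and the implication $\tau_1=0$, I would match components according to the bigrading $\Lambda^p\r^*\otimes\Lambda^q\langle z^1,z^2,z^3\rangle$ in the equation $d\psi = 4\tau_1\wedge\psi$, which is equivalent to $\tau_2=0$. Writing $\tau_1 = \tau_1^\r + \sum_i b_i z^i$ with $\tau_1^\r\in\r^*$, the form $4\tau_1\wedge\psi$ splits into three pieces: bidegree $(3,2)$ contributions $(\tau_1^\r\wedge\omega_i)\wedge z^{jk}$, a bidegree $(2,3)$ contribution $\bigl(\sum_i b_i\omega_i\bigr)\wedge z^{123}$, and bidegree $(4,1)$ contributions $b_i\,{*_\r 1}\wedge z^i$. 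Since $d\psi$ has only bidegree-$(4,1)$ components, matching yields: $\tau_1^\r\wedge\omega_i = 0$ for each $i$, forcing $\tau_1^\r = 0$ by the non-degeneracy (symplecticity) of each $\omega_i$ on the $4$-dimensional $\r$; $\sum_i b_i\omega_i = 0$, forcing $b_i = 0$ by linear independence of the $\omega_i$; and $a_{ij} = a_{ji}$ for $i\neq j$. Conversely, if $a_{ij} = a_{ji}$ then $d\psi = 0$, and the orthogonal decomposition $d\psi = 4\tau_1\wedge\psi - {*\tau_2}$ with $4\tau_1\wedge\psi\in\Omega^5_7$ and ${*\tau_2}\in\Omega^5_{14}$ forces both summands to vanish, giving $\tau_1 = 0$ and $\tau_2=0$ simultaneously. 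The only technical hurdle is the sign bookkeeping in the initial computations of $d\f$ and $d\psi$; once the bigrading decomposition is exploited, every matching step is algebraically forced.
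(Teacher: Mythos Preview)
Your proof is correct and follows essentially the same approach as the paper: both compute $d\varphi$ and $d\psi$ explicitly in the given splitting, extract $\lambda$ from $\varphi\wedge d\varphi = 7\tau_0\,{*1}$, and then analyze the equation $d\psi = 4\tau_1\wedge\psi$. The only organizational difference is that the paper first computes $\tau_1$ explicitly (obtaining $12\tau_1 = (a_{23}-a_{32})z^1 + (a_{31}-a_{13})z^2 + (a_{12}-a_{21})z^3$) and then compares $12\tau_1\wedge\psi$ with $3\,d\psi$, whereas you treat $\tau_1$ as an unknown and match bidegrees directly; the content is the same.
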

\begin{proof}
We have 
\begin{equation}\label{dphi}
d\varphi=\sum_{i=1}^3 \omega_i\w\alpha_i+\alpha_1\w z^2\w z^3+\alpha_2 
\w z^3 \w z^1+ \alpha_3 
\w z^1 \w z^2\,,
\end{equation}
which together with \eqref{eq:G2n23} and the fact that $\omega_i\wedge\alpha_i=a_{ii}*_\r 1$ gives
\begin{equation}\label{fdf}
\varphi\w d\varphi = 2 (a_{11}+a_{22}+a_{33})\,*1\,.
\end{equation}
On the other hand, from the first equation in \eqref{g2torsion} we obtain
$\varphi\w d\varphi =7\tau_0*1=12 \lambda *1$, which together with \eqref{fdf} implies \eqref{lambda3}. 
Moreover, 
$$
\begin{aligned}
d\psi=&\, \omega_1\w \alpha_2\w z^3 -  \omega_1\w \alpha_3\w z^2+ \omega_2\w \alpha_3\w z^1- \omega_2\w \alpha_1\w z^3+ \omega_3\w \alpha_1\w z^2-  \omega_3\w \alpha_2\w z^1\\
=&\, (a_{12}-a_{21}) *_\r1\w z^3 + (a_{31}-a_{13}) *_\r1\w z^2+ (a_{23}-a_{32}) *_\r1\w z^1,
\end{aligned}
$$
and 
\begin{equation}\label{tau1}
    12 \tau_1=\psi  \iprod d\psi = (a_{12}-a_{21})z^3 +(a_{31}-a_{13})z^2+  (a_{23}-a_{32})z^1\,,
\end{equation}
which implies
$$
\begin{aligned}
12 \tau_1\w\psi =((a_{23}-a_{32})\omega_1+(a_{31}-a_{13})\omega_2+ (a_{12}-a_{21})\omega_3)\w z^1\w z^2\w z^3+12\tau_1\wedge *_\r1\,. 
\end{aligned}
$$
Hence, the condition $\tau_2=0$, which by \eqref{g2torsion} is equivalent to $4\tau_1\w \psi =d\psi$,  is satisfied if and only if 
$$\tau_1=0\qquad\mbox{and}\qquad
a_{ij}=a_{ji}\,,\quad\forall i,j\in\{1,2,3\}\,.
$$
These last two conditions are in fact equivalent by \eqref{tau1}. 
\end{proof}

The previous result proves Proposition \ref{coclosed}: if $\dim(\n')=2$ or $\dim(\n')=3$ and $\n'$ is calibrated by $\f$, every $\G_2T$-structure on $\n$ is coclosed.
Note however that, when $\dim(\n')=3$, there exist $\G_2T$-structures with $\n'$ not calibrated by $\f$ that are not coclosed: 
\begin{ex}\label{ex:notcal}
For any real numbers $x,y$ consider the 2-step nilpotent Lie algebra structure on $\R^7$ with $\n'=\langle e_4,e_6,e_7\rangle$ and structure 2-forms 
$$de^4=e^{13}+xe^{12}+ye^{23},\qquad de^6=e^{15}-ye^{25},\qquad de^7=-e^{35}+xe^{25}.$$
Then the $\G_2$ structure defined by \eqref{standard} satisfies
$d\psi=e^2\wedge\psi,$ so it has $\tau_2=0$ and $\tau_1=\frac14 e^2$. Note that the Lie algebra is isomorphic to $\n_{6,3}\oplus\R$ if $x=y=0$, and to $\n_{7,3,B}$ otherwise.
\end{ex}

Combining Lemma \ref{coclosed2} with \cite[Prop.~4.4]{DMR}, we obtain the following. 
\begin{cor}
    If $\dim(\n')=2$ and $\varphi$ is a $\G_2T$-structure on $\n$, then  $\n$ is decomposable. 
    In particular, $\n$ is isomorphic to one of the following Lie algebras: $\n_{5,2}\oplus\R^2$, $\mathfrak{h}_3\oplus\mathfrak{h}_3\oplus\R$, 
    $\mathfrak{h}_3^\C\oplus\R$, $\n_{6,2}\oplus\R$. 
\end{cor}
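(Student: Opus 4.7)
The plan is to combine the two ingredients that the statement points to. First, I would invoke Lemma \ref{coclosed2}: under the hypothesis $\dim(\n')=2$, the condition $\tau_2=0$ forces also $\tau_1=0$, so $\varphi$ is in fact coclosed. This lifts us from the setting of general $\G_2T$-structures to that of coclosed $\G_2$-structures, which is exactly the setting where the classification results of \cite{DMR} apply.

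Second, I would quote \cite[Prop.~4.4]{DMR}, which asserts that any $7$-dimensional $2$-step nilpotent Lie algebra with $2$-dimensional commutator admitting a coclosed $\G_2$-structure must be decomposable. Heuristically, this is consistent with the setup already established: having chosen $z_1,z_2$ to be an orthonormal basis of $\n'$ and put $z_3 \coloneqq z_1 \times_\varphi z_2$, we have $z_3 \in \mathfrak{z}\setminus\n'$ and $\alpha_3 = dz_3^\flat = 0$, so $z_3$ generates a central line whose dual is closed. The content of \cite[Prop.~4.4]{DMR} is that this line can actually be split off as a direct Lie algebra summand. I would not attempt to reprove this here; the cited proposition is exactly what is needed.

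Finally, for the explicit list of isomorphism classes, I would recall the classification of $7$-dimensional $2$-step nilpotent Lie algebras with $\dim(\n')=2$ admitting a coclosed $\G_2$-structure, which was already quoted in the introduction and is detailed in Appendix \ref{2stepnilclass}: the four decomposable classes are $\n_{5,2}\oplus\R^2$, $\mathfrak{h}_3\oplus\mathfrak{h}_3\oplus\R$, $\mathfrak{h}_3^\C\oplus\R$, and $\n_{6,2}\oplus\R$. Combined with the decomposability output of \cite[Prop.~4.4]{DMR}, this yields precisely the stated list.

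The only nontrivial step is the decomposability assertion itself, and since this is delegated to \cite[Prop.~4.4]{DMR}, the proof of the corollary is little more than quoting Lemma \ref{coclosed2} to enter the coclosed regime, quoting the cited proposition to obtain decomposability, and reading off the four algebras from the known classification. Accordingly, there is no real obstacle in assembling the argument; the main "work" has already been done in Lemma \ref{coclosed2} and in \cite{DMR}.
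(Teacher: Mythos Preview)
Your proposal is correct and matches the paper's own argument exactly: the paper simply states that the corollary follows by combining Lemma \ref{coclosed2} (which upgrades the $\G_2T$-structure to a coclosed one) with \cite[Prop.~4.4]{DMR}, and then reads off the four decomposable classes from the classification in Appendix \ref{2stepnilclass}.
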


\begin{lemma}\label{dH2}
Assume that the $\mathrm{G}_2$-structure $\varphi$ given by \eqref{eq:G2n23} has $\tau_2=0$ (and thus $\tau_1=0$ by Lemma \ref{coclosed2}). 
Then  
$$
dH_{\varphi}=-4\lambda\,\left(\alpha_1\w z^2\w z^3+\alpha_2\w z^3\w z^1
+\alpha_3\w z^1\w z^2\right)+ \left(12 \lambda^2-\sum_{i=1}^3|\alpha_i|^2\right)*_\r1\,.
$$
\end{lemma}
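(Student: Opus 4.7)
The plan is to exploit Remark \ref{tau2tau1}: since Lemma \ref{coclosed2} gives $\tau_1=0$ and $\tau_0 = \tfrac{12}{7}\lambda$, we have
$$H_\varphi = \tfrac{7}{6}\tau_0\, \varphi - *d\varphi = 2\lambda\,\varphi - *d\varphi,$$
so $dH_\varphi = 2\lambda\, d\varphi - d(*d\varphi)$. This reduces the computation to evaluating $d\varphi$, $*d\varphi$, and $d(*d\varphi)$ explicitly on $\n$. The expression for $d\varphi$ is already available from \eqref{dphi}.

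To evaluate $*d\varphi$, I will split $d\varphi$ into its ``horizontal'' part $\sum_i \omega_i\wedge\alpha_i \in \Lambda^4\r^*$ and its ``mixed'' part $B\coloneqq \alpha_1\wedge z^2\wedge z^3 + \alpha_2\wedge z^3\wedge z^1 + \alpha_3\wedge z^1\wedge z^2$. By definition of $a_{ij}$ and formula \eqref{lambda3}, the horizontal part equals $(a_{11}+a_{22}+a_{33})\,*_\r 1 = 6\lambda\, *_\r 1$, whose Hodge dual in $\n$ is $6\lambda\, z^1\wedge z^2\wedge z^3$. For the mixed part, I will invoke the product decomposition of the Hodge star: since the volume form on $\n$ factorises as $*_\r 1 \wedge z^1\wedge z^2\wedge z^3$, a direct sign check shows that for $\beta\in\Lambda^2\r^*$ and any cyclic permutation $(i,j,k)$ of $(1,2,3)$,
$$*(\beta\wedge z^j\wedge z^k) = *_\r\beta\wedge z^i.$$
Applying this term by term yields
$$*d\varphi = 6\lambda\, z^1\wedge z^2\wedge z^3 + \sum_{i=1}^{3} *_\r\alpha_i\wedge z^i.$$

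Finally I compute $d(*d\varphi)$. Since $e^1,\dots,e^4$ annihilate $\n'\subseteq\langle z_1,z_2,z_3\rangle$, each $e^a$ with $a\leq 4$ is closed; hence both $\alpha_i$ and $*_\r\alpha_i$ are closed $2$-forms. A Leibniz expansion then gives
$$d(*_\r\alpha_i\wedge z^i) = *_\r\alpha_i\wedge dz^i = *_\r\alpha_i\wedge\alpha_i = |\alpha_i|^2\, *_\r 1,$$
while $d(6\lambda\, z^1\wedge z^2\wedge z^3) = 6\lambda\, B$ exactly as in the derivation of \eqref{dphi}. Putting everything together,
$$dH_\varphi = 2\lambda\,(6\lambda\, *_\r 1 + B) - \Big(6\lambda\, B + \sum_{i=1}^{3}|\alpha_i|^2\, *_\r 1\Big) = \Big(12\lambda^2 - \sum_{i=1}^{3}|\alpha_i|^2\Big)*_\r 1 - 4\lambda\, B,$$
which is the claimed identity. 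The calculation is mostly careful bookkeeping; the only delicate point is the sign in $d(*_\r\alpha_i\wedge z^i)$, where the fact that $*_\r\alpha_i$ is a $2$-form (so that the Leibniz sign is $+1$) produces a positive $|\alpha_i|^2\, *_\r 1$ contribution and, after subtraction from $2\lambda\, d\varphi$, the minus sign in front of $\sum_i|\alpha_i|^2$.
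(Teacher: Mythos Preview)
Your proof is correct and follows essentially the same approach as the paper: both use Remark \ref{tau2tau1} to write $H_\varphi = 2\lambda\varphi - *d\varphi$, compute $*d\varphi = 6\lambda\, z^1\wedge z^2\wedge z^3 + \sum_i (*_\r\alpha_i)\wedge z^i$ via the product decomposition of the Hodge star, and then differentiate. The only cosmetic difference is that you compute $dH_\varphi = 2\lambda\, d\varphi - d(*d\varphi)$ directly, whereas the paper first writes out $H_\varphi$ as an explicit sum of four terms and differentiates each one; the intermediate steps and final answer coincide.
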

\begin{proof}
Using \eqref{dphi} and \eqref{lambda3}, we compute  
$$
\begin{aligned}
*d\varphi=&\,*\left(\sum_{i=1}^3 \omega_i\w\alpha_i+\alpha_1\w z^2\w z^3+\alpha_2 
\w z^3\w z^1+ \alpha_3 
\w z^1 \w z^2\right)\\
=&\,\sum_{i=1}^3a_{ii}\,z^1\w z^2\w z^3+\sum_{i=1}^3(*_\r\alpha_i)\w  z^i\\
=&\, 6\lambda \,z^1\w z^2\w z^3+\sum_{i=1}^3(*_\r\alpha_i)\w  z^i\,.
\end{aligned}
$$
Since $
H_{\varphi}=2\lambda \varphi-*d\varphi$ (cf. Remark \ref{tau2tau1}), it  follows that
$$
H_\varphi=-6\lambda \,z^1\w z^2\w z^3-\sum_{i=1}^3(*_\r\alpha_i)\w  z^i
+2\lambda\sum_{i=1}^3 \omega_i\w z^i+2 \lambda\, z^1\w z^2 \w z^3\,. 
$$
We then compute using again \eqref{lambda3}:
$$
\begin{aligned}
dH_{\varphi}=&\, -6\lambda\,\left(\alpha_1\w z^2\w z^3-\alpha_2\w z^1\w z^3
+\alpha_3\w z^1\w z^2\right)-\sum_{i=1}^3 |\alpha_i|^2*_\r1\\
&\,+2\lambda \sum_{i=1}^3 \omega_i\w\alpha_i+2\lambda \alpha_1\w z^2\w z^3 - 2\lambda \alpha_2 
\w z^1 \w z^3+ 2\lambda \alpha_3 
\w z^1 \w z^2\\
&=\, -4\lambda\left(\alpha_1\w z^2\w z^3+\alpha_2\w z^3\w z^1
+\alpha_3\w z^1\w z^2\right)+ \left(12 \lambda^2-\sum_{i=1}^3|\alpha_i|^2\right)*_\r1\,,
\end{aligned}
$$
\end{proof}

We now focus on the instanton equation. We first note that any $F\in \Lambda^2\n^*$ can be written as 
\begin{equation}\label{eq:genericF}
F=F_0+v_1\w z^1 +v_2\w z^2 + v_3\w z^3 + a_1\,z^2\w z^3 + a_2\,z^3\w z^1 + a_3\,z^1\w z^2\,,
\end{equation}
for some $F_0\in \Lambda^2\r^*$, $v_1,v_2,v_3\in \r^*$, and $a_1,a_2,a_3\in\R$. 
\begin{lemma}\label{instaton2}
Let $F\in \Lambda^2\n^*$ be any $2$-form, expressed as in \eqref{eq:genericF}. Then 
$$
\begin{cases}
dF=0\,,\\
F\w\psi=0,
\end{cases}
\qquad\iff\qquad 
\begin{cases}
F_{0}\w\omega_i=0\,,\qquad \forall i\in\{1,2,3\},\\
\sum_{i=1}^3v_i\w \alpha_i =0,\\
\sum_{i=1}^3v_i\w\omega_i=0,\\
a_1=a_2=a_3=0.
\end{cases}
$$
\end{lemma}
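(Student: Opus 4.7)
The plan is to expand both $dF$ and $F\wedge\psi$ in the bigrading $\Lambda^\bullet\n^*=\Lambda^\bullet\r^*\otimes\Lambda^\bullet\langle z^1,z^2,z^3\rangle$ and to read off the conditions component by component.

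First I would compute $dF$ using $d|_{\r^*}=0$ and $dz^i=\alpha_i$. The contribution of $F_0$ is zero, while $d(v_i\wedge z^i)=-v_i\wedge\alpha_i\in\Lambda^3\r^*$ and $d(z^j\wedge z^k)=\alpha_j\wedge z^k-\alpha_k\wedge z^j$. Sorting the resulting terms by the number of $z$-factors, the vanishing of $dF$ splits into the purely $\r$-equation $\sum_i v_i\wedge\alpha_i=0$ together with three identities of type $a_j\alpha_k-a_k\alpha_j=0$, one for each cyclic permutation of $(1,2,3)$.

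Next I would expand $F\wedge\psi$ using the expression for $\psi$ in \eqref{psi1}. Two observations eliminate most cross-terms: any form of degree $\geq 5$ on $\r^*$ vanishes since $\dim\r=4$, and $z^{ij}\wedge z^{mn}=0$ for any $i,j,m,n\in\{1,2,3\}$. What remain are contributions in $\Lambda^4\r^*\wedge z^{jk}$ (one for each cyclic pair $jk$) and one in $\Lambda^3\r^*\wedge z^{123}$. For the first type, using self-duality of $\omega_i$ on the oriented $4$-dimensional space $\r$, one has $F_0\wedge\omega_i=\langle F_0,\omega_i\rangle *_\r 1$, so the coefficient of $z^{jk}$ in $F\wedge\psi$ is $(\langle F_0,\omega_i\rangle+a_i)\,*_\r 1$ for the $i$ complementary to $\{j,k\}$. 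The $z^{123}$-component contributes $\sum_i v_i\wedge\omega_i$. Hence $F\wedge\psi=0$ is equivalent to $\langle F_0,\omega_i\rangle+a_i=0$ for every $i$ together with $\sum_i v_i\wedge\omega_i=0$.

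Finally I would combine the two sets of equations. The conditions $\sum v_i\wedge\alpha_i=0$ and $\sum v_i\wedge\omega_i=0$ appear verbatim in the statement. To extract $a_1=a_2=a_3=0$, I would exploit the three identities $a_j\alpha_k=a_k\alpha_j$ coming from $dF=0$: when $\dim\n'=3$ the forms $\alpha_1,\alpha_2,\alpha_3$ are linearly independent and each identity individually forces its two $a$-coefficients to vanish; when $\dim\n'=2$ we have $\alpha_3=0$ and $\alpha_1,\alpha_2$ linearly independent, so two of the identities give $a_3=0$ and the last one gives $a_1=a_2=0$. Substituting $a_i=0$ into $\langle F_0,\omega_i\rangle+a_i=0$ recovers $\langle F_0,\omega_i\rangle=0$, equivalently $F_0\wedge\omega_i=0$. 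The reverse implication is then immediate. The main obstacle I expect is purely bookkeeping: carefully tracking signs when rewriting $v_i\wedge z^i\wedge\omega_j\wedge z^{kl}$ and $d(z^j\wedge z^k)$ so that the bihomogeneous components assemble correctly. Once this is handled, the only conceptual step is the use of the linear independence of the nonzero $\alpha_i$'s, which is forced by the dimension of $\n'$ in both admissible cases.
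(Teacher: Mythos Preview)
Your proposal is correct and follows essentially the same approach as the paper: expand $dF$ and $F\wedge\psi$ according to the bigrading $\Lambda^\bullet\r^*\otimes\Lambda^\bullet\langle z^1,z^2,z^3\rangle$, then use the linear independence of the nonzero $\alpha_i$ (in both the $\dim\n'=2$ and $\dim\n'=3$ cases) to force $a_1=a_2=a_3=0$. The only cosmetic difference is ordering: the paper first extracts $a_i=0$ from $dF=0$ and then computes $F\wedge\psi$ with the $a_i$-terms already discarded, whereas you keep the $a_i$-terms in the $F\wedge\psi$ expansion (obtaining $\langle F_0,\omega_i\rangle+a_i=0$) and combine with $dF=0$ afterwards; both lead to the same conclusion.
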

\begin{proof}
Since $F_0,v_1,v_2,v_3$ are closed, we have
\[
dF  =  -v_1\w \alpha_1 - v_2\w \alpha_2- v_3\w \alpha_3 
     +(a_2\,\alpha_3 - a_3 \alpha_2) \w z^1 +(a_3\,\alpha_1 - a_1\,\alpha_3)\w z^2+ (a_1\,\alpha_2-a_2\,\alpha_1)\w z^3, 
\]
and then 
$$
dF=0 \iff 
\begin{cases} 
v_1\w \alpha_1 + v_2\w \alpha_2+v_3\w \alpha_3=0,\\
a_2\,\alpha_3 - a_3\,\alpha_2 = 0,\\
a_3\,\alpha_1 - a_1\,\alpha_3=0,\\
a_1\,\alpha_2-a_2\,\alpha_1=0.
\end{cases}
$$
If $\dim(\n')=2$, then $\alpha_3=0$ and $\alpha_1$ and $\alpha_2$ are linearly independent. 
If $\dim(\n')=3$, then $\alpha_1$, $\alpha_2$ and $\alpha_3$ are linearly independent. 
In both cases, the previous system shows that $a_1=a_2=a_3=0$, hence  
$$
dF=0 \iff 
\begin{cases} 
v_1\w \alpha_1 + v_2\w \alpha_2+v_3\w \alpha_3=0,\\
a_1=a_2=a_3=0.
\end{cases}
$$
Moreover,  
$$
\begin{aligned}
F\wedge \psi= &\,(F_0+v_1\w z^1 +v_2\w z^2 + v_3\w z^3)\w 
(\omega_1\w z^2\w z^3+\omega_2\w z^3\w z^1+\omega_3\w z^1\w z^2+*_\r1)\\
=&\,F_0\w \omega_1\w z^2\w z^3+F_0\w \omega_2\w z^3\w z^1+F_0\w \omega_3\w z^1\w z^2
+\left(\sum_{i=1}^3\omega_i\w v_i\right)\w z^1 \w z^2\w z^3\,,
\end{aligned}
$$
whence
$$
F\wedge \psi=0 \iff 
\begin{cases}
F_{0}\w\omega_i=0\,,\qquad \forall i\in\{1,2,3\}\\
\sum_{i=1}^3v_i\w\omega_i=0
\end{cases}
$$
which implies the claim. 
\end{proof}

\begin{prop}\label{prop:FHsystem}
Assume that the $\mathrm{G}_2$-structure $\varphi$ given by \eqref{eq:G2n23} has $\tau_2=0$ (and thus $\tau_1=0$ by Lemma \ref{coclosed2}). 
Consider $\epsilon_1,\dots,\epsilon_k \in \R\smallsetminus\{0\}$. Then the $2$-forms $F^r=F_0^r+\sum_{i=1}^3v_i^r\w z^i \in \Lambda^2\n^* $, $1\leq r \leq k$, satisfy    
\begin{equation}
\label{Fsystem}
\begin{cases}
dF^r=0\,,\\
F^r\wedge \psi=0\,,\\
dH_{\varphi}=\sum_{r=1}^k\epsilon_{r}\, F^{r}\wedge F^{r}\,,
\end{cases}
\end{equation}
if and only if the following  equations are satisfied 
\begin{eqnarray}
&\label{1} F_{0}^r\w\omega_i =0\,,& \forall i\in\{1,2,3\},~\forall r\in\{1,\ldots, k\}\,,\\
&\label{5} \sum_{r=1}^k \epsilon_r\, F_0^r\w v_i^r =0\,,& \forall i\in\{1,2,3\}\,,\\
&\label{2} \sum_{i=1}^3v_i^r\w \alpha_i =0\,,& \forall r\in\{1,\ldots, k\},\\
&\label{3} \sum_{i=1}^3 v_i^r\w\omega_i=0\,,& \forall r\in\{1,\ldots, k\},\\
&\label{4} \sum_{r=1}^k\epsilon_r\,|F_0^r|^2 = \sum_{i=1}^3 |\alpha_i|^2 -12 \lambda^2\,,\\
&\label{6} \sum_{r=1}^k \epsilon_r\,  v_1^r\w v_2^r =2\lambda\, \alpha_3\,,\\
&\label{7} \sum_{r=1}^k \epsilon_r\,  v_3^r\w v_1^r =2\lambda\,  \alpha_2\,,\\
&\label{8}\sum_{r=1}^k \epsilon_r\,  v_2^r\w v_3^r =2\lambda\,  \alpha_1\,.
\end{eqnarray}
\end{prop}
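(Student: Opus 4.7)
The first two equations of \eqref{Fsystem} are immediate from Lemma~\ref{instaton2} applied to each $F^r$ separately: the ansatz already drops the $z^i\wedge z^j$ components, in line with the conclusion $a_1=a_2=a_3=0$ of that lemma, so what remains of $dF^r=0$ and $F^r\wedge\psi=0$ translates directly into \eqref{1}, \eqref{2} and \eqref{3}. The substance of the proof lies in unpacking the heterotic Bianchi identity $dH_\varphi=\sum_r\epsilon_r\,F^r\wedge F^r$.

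The natural tool is the bigrading on $\Lambda^\bullet\n^*$ coming from the splitting $\n^*=\r^*\oplus\langle z^1,z^2,z^3\rangle$: I call a form of pure type $(p,q)$ if it has $\r^*$-degree $p$ and $z^i$-degree $q$. By Lemma~\ref{dH2}, $dH_\varphi$ has only a $(4,0)$ part, the multiple of $*_\r 1$, and a $(2,2)$ part built from the three $\alpha_i\wedge z^j\wedge z^k$ terms. Writing $F^r=F_0^r+V^r$ with $V^r\coloneqq\sum_i v_i^r\wedge z^i$ of type $(1,1)$, the three relevant pieces of $F^r\wedge F^r$ are $F_0^r\wedge F_0^r$ of type $(4,0)$, $2\,F_0^r\wedge V^r$ of type $(3,1)$, and $V^r\wedge V^r$ of type $(2,2)$. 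The plan is to match each bidegree of $\sum_r\epsilon_r\,F^r\wedge F^r$ with the corresponding bidegree of $dH_\varphi$.

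For the $(4,0)$ piece, equation~\eqref{1} together with the fact that $\{\omega_1,\omega_2,\omega_3\}$ spans $\Lambda^2_+\r^*$ forces each $F_0^r$ to be anti-self-dual on the oriented $4$-dimensional space $\r$, so $F_0^r\wedge F_0^r=-|F_0^r|^2\,{*_\r 1}$; comparing coefficients of $*_\r 1$ yields \eqref{4}. The $(3,1)$ part of $dH_\varphi$ is zero, so the identity $\sum_r\epsilon_r\,2\sum_i F_0^r\wedge v_i^r\wedge z^i=0$ combined with the linear independence of $z^1,z^2,z^3$ gives \eqref{5} for each $i$. For the $(2,2)$ piece, a direct computation using $z^i\wedge v_j^r=-v_j^r\wedge z^i$ gives
\[
V^r\wedge V^r=-2\sum_{i<j} v_i^r\wedge v_j^r\wedge z^i\wedge z^j;
\]
reading off the coefficients of $z^{12}$, $z^{13}$ and $z^{23}$—while remembering that $z^{31}=-z^{13}$ flips the sign on the $\alpha_2$-contribution coming from $dH_\varphi$—produces \eqref{6}, \eqref{7} and \eqref{8} respectively.

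The only real obstacle is this last bit of sign bookkeeping in the $(2,2)$ expansion; the rest is a formal bidegree matching. Conversely, assuming \eqref{1}--\eqref{8}, each bidegree of the equation $dH_\varphi=\sum_r\epsilon_r\,F^r\wedge F^r$ is satisfied individually, so the equation holds, closing the equivalence.
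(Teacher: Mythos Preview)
Your proof is correct and follows essentially the same approach as the paper: invoke Lemma~\ref{instaton2} for each $F^r$ to obtain \eqref{1}, \eqref{2}, \eqref{3}, then expand $F^r\wedge F^r$ and compare with the expression for $dH_\varphi$ from Lemma~\ref{dH2}, matching components according to their $z^i$-content. Your explicit use of the $(p,q)$-bigrading is simply a cleaner way of phrasing what the paper does by direct inspection of the expanded $F^r\wedge F^r$; the sign bookkeeping in the $(2,2)$ piece is exactly right.
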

\begin{proof}
Equations \eqref{1}, \eqref{2}, \eqref{3}  come  from Lemma \ref{instaton2}. 
The first one shows that $F_0^r\in\Lambda^2\r^*$ is anti-self-dual, so $F_0^r\w F_0^r = -|F_0^r|^2 *_\r1$, for all $1\leq r \leq k$. 
We then have 
\begin{equation}
\begin{split}
F^r\w F^r=&-|F_0^r|^2 *_\r1-2 v_1^r\w v_2^r\w z^1\w z^2-2 v_1^r\w v_3^r\w z^1\w z^3\\&-2 v_2^r\w v_3^r\w z^2\w z^3+\sum_{i=1}^3 F_0^r\w v_i^r\w z^i.
\end{split}
\end{equation}
Taking into account Lemma \ref{dH2}, we obtain that the equation  $dH_{\varphi}=\sum_{r=1}^k\epsilon_{r}\, F^{r}\wedge F^{r}$ is equivalent to \eqref{5} and \eqref{4}--\eqref{8}.

\end{proof}
\begin{rmk}\label{rem} \ 
\begin{itemize}
    \item Equation \eqref{3} implies that, for every $r$, $v_3^r$ is determined by $v_1^r$ and $v_2^r$. Indeed, if we denote by  
$\{J_1,J_2,J_3\}$ the hypercomplex structure on $\r$ induced by $\{\omega_1,\omega_2,\omega_3\}$, then \eqref{3} is equivalent to
\[
v_3^r=-J_2 v_1^r+J_1 v_2^r\,.
\]
\item When $\lambda\neq0$, Equations \eqref{6}--\eqref{8} together with \eqref{3} imply \eqref{a=a}. Indeed, we have
\[
\begin{split}
\alpha_1\w \omega_2 &= \frac1{2\lambda}\left(\sum_{r=1}^k \epsilon_r\,  v_2^r\w v_3^r\right)\w\omega_2 
                    = - \frac1{2\lambda}\sum_{r=1}^k \epsilon_r\,  v_3^r \w v_2^r\w\omega_2\\
                    &= - \frac1{2\lambda}\sum_{r=1}^k \epsilon_r\,  v_3^r \w (-v_1^r\w\omega_1-v_3^r\w\omega_3)
                    = \frac1{2\lambda}\sum_{r=1}^k \epsilon_r\,  v_3^r \w v_1^r\w\omega_1 = \alpha_2\w\omega_1,  
\end{split}
\]
and, similarly, $\alpha_1\w\omega_3=\alpha_3\w\omega_1$ and $\alpha_2\w\omega_3=\alpha_3\w\omega_2$. 
\end{itemize}
\end{rmk}

In summary, if a $2$-step nilmanifold $M=\Gamma\backslash N$ admits an invariant solution $(\varphi,P,\theta,\langle \cdot,\cdot\rangle_{\mathfrak{k}})$
to the $\G_2$-system \eqref{eq:HetG2sys} and either $\dim(\n')= 2$ or $\dim(\n')= 3$ and $\f$ calibrates $\n'$, 
then equations \eqref{lambda3}, \eqref{a=a} and \eqref{1}-\eqref{8} must hold. 
This will allow us to determine certain constraints on the Lie algebra $\n$ or on the signature of the bilinear form 
$\langle\cdot,\cdot\rangle_{\mathfrak{k}}$ imposed by the existence of solutions. 
Conversely, we can state the following existence result, which is analogous to Proposition \ref{R6}.

\begin{prop}\label{prop:exconstr}
Consider the vector space $\R^4$ endowed with the standard basis $\{e_1,e_2,e_3,e_4\}$ and the standard $\mathrm{SU}(2)$-structure 
$(\omega_1 = e^{13}-e^{24},~\omega_2= -e^{14}-e^{23},~\omega_3= e^{12}+e^{34})$.  
Then, any solution of the equations \eqref{lambda3}, \eqref{a=a} and \eqref{1}-\eqref{8}
such that the $1$-forms $v_1,v_2,v_3\in(\R^4)^*$, the $2$-forms $\alpha_1, \alpha_2, \alpha_3 \in\Lambda^2(\R^4)^*$ and 
$F_0^r\in\Lambda^2(\R^4)^*$, $1\leq r \leq k$, are integral with respect to the standard basis of $\R^4$, and either $\alpha_1$, $\alpha_2$ are linearly independent and $\alpha_3=0$, or $\alpha_1,\ \alpha_2$ and $\alpha_3$ are linearly independent,
gives rise to a $2$-step nilmanifold endowed with a $k$-torus bundle 
and admitting an invariant solution to the ${\G}_2$-system \eqref{eq:HetG2sys}. 
\end{prop}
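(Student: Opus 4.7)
The plan is to mirror the construction used in the proof of Proposition \ref{R6}, upgrading from the $\dim(\n')=1$ setting to the $\dim(\n')=2$ or $3$ setting. The idea is to reverse-engineer the Lie algebra $\n$ from the SU(2) data on $\R^4$, glue on the vertical directions using the $\alpha_i$'s as structure forms, verify that the resulting left-invariant $\G_2$-structure on $\n$ satisfies the heterotic system via Proposition \ref{prop:FHsystem}, and then invoke Malcev's criterion together with Theorem \ref{Theoapp1} to descend to a nilmanifold equipped with a principal torus bundle.

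More concretely, I would first set $\n \coloneqq \R^4 \oplus \langle z_1, z_2, z_3\rangle$ with the product inner product that makes $z_1, z_2, z_3$ an orthonormal frame orthogonal to $\R^4$, and define the differential on $\n^*$ by $de^i = 0$ for $1 \le i \le 4$ and $dz^j = \alpha_j$ for $j=1,2,3$. Because $\alpha_j \in \Lambda^2(\R^4)^*$, the subspace $\langle z_1, z_2, z_3\rangle$ is central and Jacobi is automatic, so $\n$ is a $2$-step nilpotent Lie algebra; the dimension of $\n'$ is $2$ or $3$ according as $\alpha_3 = 0$ or not, and in the latter case $\f$ calibrates $\n'$ by construction. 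The $3$-form $\f \coloneqq \sum_{i=1}^3 \omega_i \wedge z^i + z^1\wedge z^2\wedge z^3$ defines the required $\G_2$-structure, and Lemma \ref{coclosed2} combined with hypothesis \eqref{a=a} gives $\tau_2 = 0$, while \eqref{lambda3} ties $\lambda$ to the $a_{ii}$'s. Proposition \ref{prop:FHsystem} then translates the assumed equations \eqref{1}--\eqref{8} into the statement that the $2$-forms $F^r \coloneqq F_0^r + \sum_{i=1}^3 v_i^r \wedge z^i$ satisfy the heterotic $\G_2$-system on $\n$.

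For the descent, the integrality of the $\alpha_j$'s in the standard basis of $\R^4$ makes the structure constants of $\n$ integral in the basis $\mathcal{B} = \{e_1,\ldots,e_4,z_1,z_2,z_3\}$, so Malcev's theorem provides the cocompact lattice $\Gamma \coloneqq \exp\bigl(\mathrm{span}_\Z(6e_1,\ldots,6e_4, z_1, z_2, z_3)\bigr)$ (the factor $6$ is chosen as in Remark \ref{rem:6} to absorb the Baker--Campbell--Hausdorff corrections). Integrality of the $F_0^r$'s and the $v_i^r$'s ensures that each $F^r$ is integral on $\mathcal{B}$ (note that $F^r$ has no $z^i \wedge z^j$ component, so the potentially problematic pairings vanish), hence Theorem \ref{Theoapp1} supplies a principal $\mathbb{T}^k$-bundle $P \to M = \Gamma\backslash N$ with connection $\theta$ of curvature $F_\theta = \sum_{r=1}^k F^r \otimes t_r$ for a basis $\{t_1,\ldots,t_k\}$ of $\mathfrak{k} = \R^k$. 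Setting $\langle\cdot,\cdot\rangle_\mathfrak{k} \coloneqq \sum_{r=1}^k \epsilon_r \, t^r \otimes t^r$ then produces the desired invariant solution $(\f, P, \theta, \langle\cdot,\cdot\rangle_\mathfrak{k})$.

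I do not expect any serious obstacles: the algebraic content is entirely packaged in Proposition \ref{prop:FHsystem}, and the geometric descent is a direct application of the machinery already cited (Malcev's theorem and Theorem \ref{Theoapp1}). The only subtlety worth spelling out is the bookkeeping that confirms integrality of each $F^r$ on the scaled basis $\{6e_1,\ldots,6e_4,z_1,z_2,z_3\}$ — the $F_0^r(6e_i, 6e_j)$ and $v_i^r(6e_\ell)$ pairings inherit integrality from the hypotheses, and the absence of a $z^i \wedge z^j$ component in $F^r$ (guaranteed by Lemma \ref{instaton2}, via \eqref{1}--\eqref{3}) removes the only term whose integrality would not follow trivially.
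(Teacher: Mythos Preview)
Your proposal is correct and follows essentially the same approach as the paper: build $\n=\R^4\oplus\langle z_1,z_2,z_3\rangle$ with $dz^i=\alpha_i$, equip it with the $\G_2$-structure \eqref{eq:G2n23}, invoke Lemma \ref{coclosed2} and Proposition \ref{prop:FHsystem} for the algebraic content, and then descend via the lattice $\Gamma=\exp(\mathrm{span}_\Z(6e_1,\ldots,6e_4,z_1,z_2,z_3))$ and Theorem \ref{Theoapp1}. One small remark on your final paragraph: the integrality condition in Theorem \ref{Theoapp1} is stated with respect to the unscaled basis $\mathcal{B}=\{e_1,\ldots,e_4,z_1,z_2,z_3\}$, not the rescaled generators of the lattice, so the bookkeeping you flag is even more immediate than you suggest.
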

\begin{proof}
Consider the $7$-dimensional vector space $\n \coloneqq \R^4\oplus\langle z_1,z_2,z_3\rangle$ and endow it with the product metric $g$ for which the vectors 
$z_1,z_2,z_3$ are orthonormal. Then, if $\{e^1,e^2,e^3,e^4,z^1,z^2,z^3\}$ denotes the dual basis, $\n$ becomes a $2$-step nilpotent Lie algebra by imposing 
 \[
    (de^1,de^2,de^3,de^4,dz^1,dz^2,dz^3) = (0,0,0,0,\alpha_1,\alpha_2,\alpha_3), 
 \]
where the $2$-forms $\alpha_1,\alpha_2,\alpha_3$ are extended to $\n$ in the obvious way.
If $\alpha_3=0$ and $\alpha_1,\alpha_2$ are linearly independent, the derived algebra of $\n$ is $\n'=\langle z_1,z_2\rangle$. If $\alpha_1,\alpha_2,\alpha_3$ are linearly independent, the derived algebra of $\n$ is $\n'=\langle z_1,z_2,z_3\rangle$.
Since the $2$-forms $\alpha_1,\alpha_2,\alpha_3$  
are integral with respect to the basis $\mathcal{B}=\{e_1,e_2,e_3,e_4,z_1,z_2,z_3\}$ of $\n$, 
the simply connected $2$-step nilpotent Lie group $N=\exp(\n)$ 
has the cocompact lattice $\Gamma\coloneqq \exp(\mathrm{span}_\Z(6e_1,6e_2,6e_3,6e_4,z_1,z_2,z_3))$, 
and we can consider the $2$-step nilmanifold $M=\Gamma\backslash N$. 
Moreover, the hypothesis on $v_1,v_2,v_3$ and on the $F_0^r$'s ensures that the 
$2$-forms $F^r=F_0^r+\sum_{i=1}^3v_i^r\w z^i$, $1\leq r\leq k$, are integral with respect to the basis $\mathcal{B}$. 
Therefore, Theorem \ref{Theoapp1} ensures then the existence of a principal $\mathbb{T}^k$-bundle 
$P\to M=\Gamma\backslash N$ endowed with a connection $\theta$ whose curvature is $F_\theta= \sum_{r=1}^k F^rt_r$, 
where $\{t_1,\ldots,t_k\}$ is a basis of the Lie algebra of $\mathbb{T}^k$. 
The $3$-form $\f$ given by \eqref{eq:G2n23}, 
\[
\f = \omega_1\w z^1 + \omega_2\w z^2 + \omega_3\w z^3 + z^1 \w z^2 \w z^3\,, 
\]
defines a $\G_2$-structure on $\n$ inducing the product metric $g$. 
Since equations \eqref{lambda3} and \eqref{a=a} hold, $\f$ is coclosed and has $\tau_0 = \frac{12}{7}\lambda$, 
so it gives rise to a $\G_2$-structure of the same type on $M=\Gamma\backslash N$. 
If we define $\langle \cdot,\cdot\rangle_{\mathfrak{k}}\coloneqq\sum_{r=1}^k\epsilon_r t^{r}\otimes t^{r}$, 
we then obtain an invariant solution $(\varphi, P,\theta,\langle\cdot,\cdot\rangle_{\mathfrak{k}})$ 
to the $\G_2$-system \eqref{eq:HetG2sys} on $M=\Gamma\backslash N$.
\end{proof}

\subsection{Proof of Theorem \ref{main2}-(i)} \label{sect:sol023}
As we observed in the introduction,
it is sufficient to construct a solution to the $\G_2$-system \eqref{eq:HetG2sys} when $k_+=1$, $k_-=0$, 
and with $\n'$ calibrated by $\varphi$ if $\dim(\n')=3$. 
 We consider $\R^4$ endowed with the standard SU(2)-structure 
$(\omega_1 = e^{13}-e^{24},~\omega_2= -e^{14}-e^{23},~\omega_3= e^{12}+e^{34})$, 
and we apply Proposition \ref{prop:exconstr}. 
We first express $\alpha_1$, $\alpha_2$ (and $\alpha_3$) by solving  \eqref{lambda3} with $\lambda=0$ and \eqref{a=a}. 
Then, we choose $v_1=v_2=v_3=0$, so that most of the equations given in Proposition \ref{prop:FHsystem} are satisfied. 
We are left with equations \eqref{1} and \eqref{4}, which are easy to solve: 
it is sufficient to consider any non-zero anti-self-dual $2$-form $F_0^1\in\Lambda^2(\R^4)^*$ with integer coefficients and to define $\varepsilon_1:=\frac{\sum_{i=1}^3|\alpha_i|^2}{|F_0^1|^2}$ in order to satisfy \eqref{4}.  
If $\dim(\n')=2$, we obtain the following solutions to \eqref{lambda3} and \eqref{a=a}:
\begin{itemize}
    \item $\alpha_1=e^{13}$, $\alpha_2 = e^{23}$, which gives a solution for $\n\cong \n_{5,2}\oplus\R^2$;
    \item $\alpha_1=2e^{24}$, $\alpha_2 = e^{13}-e^{14}-e^{23}+e^{24}$
    which gives a solution for $\n\cong \mathfrak{h}_3\oplus\mathfrak{h}_3\oplus\R$;
    \item $\alpha_1=e^{13}-e^{24}$, $\alpha_2 = e^{14}+e^{23}$, which gives a solution for $\n\cong \mathfrak{h}_3^\C\oplus\R$;
    \item $\alpha_1= 2\,e^{13}$, $\alpha_2 = e^{14}+e^{23}$, which gives a solution for $\n\cong \n_{6,2}\oplus\R$.
\end{itemize}
If $\dim(\n')=3$, we have
\begin{itemize}
    \item $\alpha_1=e^{13}$, $\alpha_2 = 2e^{23}$, $\alpha_3 = e^{12}$, which gives a solution for $\n\cong \n_{6,3}\oplus\R$;
    \item $\alpha_1=e^{24}$, $\alpha_2 = e^{23}$, $\alpha_3 = 2e^{12}$, which gives a solution for $\n\cong \n_{7,3,A}$;
    \item $\alpha_1=e^{12}+e^{13}$, $\alpha_2 = 2e^{14}$, $\alpha_3 = -e^{24}+e^{34}$, which gives a solution for $\n\cong \n_{7,3,B}$;
    \item $\alpha_1= -e^{14}$, $\alpha_2 = 2e^{13}+e^{24}$, $\alpha_3 = e^{12}-e^{34}$, which gives a solution for $\n\cong \n_{7,3,B_1}$;
    \item $\alpha_1=e^{24}$, $\alpha_2 = e^{23}$, $\alpha_3 = e^{12}+e^{34}$, which gives a solution for $\n\cong \n_{7,3,C}$;
    \item $\alpha_1=e^{12}+e^{14}+e^{34}$, $\alpha_2 = -e^{13}$, $\alpha_3 = -2e^{24}$, which gives a solution for $\n\cong \n_{7,3,D}$;
    \item $\alpha_1=e^{12}-e^{34}$, $\alpha_2 = e^{13}+e^{24}$, $\alpha_3 = e^{14}-e^{23}$, which gives a solution for $\n\cong \n_{7,3,D_1}$.
\end{itemize}
By Proposition \ref{prop:exconstr}, the above algebraic data provide a solution to the $\G_2$-system \eqref{eq:HetG2sys} on nilmanifolds 
associated to each of the Lie algebras above and endowed with principal $S^1$-bundles.

\subsection{Proof of Theorem \ref{main2}-(ii)}  
Assume $\dim(\n')=2$. 

\subsubsection{The case $k=1$ and $\lambda\neq 0$}\label{5.2.1} 
Since $k=1,$ we can drop the superscript in $F^r_0,v^r_1,v^r_2,v^r_3$, in equations \eqref{1}--\eqref{8}, and consider them as a system in the unknowns $F_0,v_1,v_2,v_3$. 

It is easy to check that there are no solutions in this case. Indeed, since $F_0\in\Lambda^2\r^*$ is anti-self-dual, Equation \eqref{5} implies that 
either $F_0=0$ or all  $v_i$'s are $0$. 
The condition $v_i=0$ for every $i$ implies that the $\alpha_i$'s vanish, which gives a contradiction, since $\n$ is not abelian. Hence, we must have $F_0=0$.  Combining \eqref{6}--\eqref{8} with \eqref{2} we get 
$$
v_1\w v_2\w v_3=0\,.
$$
This means that $v_1,v_2,v_3$ are linearly dependent. Hence,  by \eqref{6}--\eqref{8}, $\alpha_1$, $\alpha_2$ and $\alpha_3$ are collinear, which contradicts the assumption $\dim(\n')=2$. 

\begin{rmk}\label{rem:k1}
    The same argument shows that there are no invariant solutions with $k=1$ and $\lambda\neq 0$ when $\dim(\n')=3$ and $\n'$ is calibrated by $\f$. 
\end{rmk}

\subsubsection{The case $k=2$ and $\lambda\neq 0$} We show that there are no solutions in this case either. Here we consider 
equation \eqref{lambda3} (see Remark \ref{rem}) together with the
system \eqref{1}--\eqref{8} with $\alpha_3=0$ in the variables $v_i\coloneqq v_{i}^1$ and $w_i \coloneqq v_i^2$: 
\begin{eqnarray}
&\label{1.2} F_{0}^r\w\omega_i =0\,,\qquad \forall i\in\{1,2,3\},\ \forall r\in\{1,2\}\,,\\
&\label{5.2}  \epsilon_1\, F_0^1\w v_i+\epsilon_2\,F_0^2\w w_i=0\,,\quad \forall i\in\{1,2,3\},\\
&\label{2.2} v_1\w \alpha_1+v_2\w \alpha_2= w_1\w \alpha_1+w_2\w \alpha_2=0\,,\\
&\label{3.2}  v_1\w\omega_1+v_2\w\omega_2 +v_3\w\omega_3 = w_1\w\omega_1+w_2\w\omega_2 +w_3\w\omega_3 =0\,,\\
&\label{4.2} \epsilon_1\,|F_0^1|^2 + \epsilon_2\,|F_0^2|^2=-12 \lambda^2+ |\alpha_1|^2+|\alpha_2|^2\,,\\
& \label{6.2} \epsilon_1\,  v_1\w v_2+\epsilon_2\, w_1\w w_2 = 0\,,\quad\quad \\
& \label{6.7} \epsilon_1\,  v_3\w v_1+\epsilon_2\,  w_3\w w_1=2\lambda\,  \alpha_2\,,\\
&  \label{8.2} \epsilon_1\,  v_2\w v_3+\epsilon_2\,  w_2\w w_3=2\lambda\,  \alpha_1\,.
\end{eqnarray}
Equation \eqref{6.2} implies that $v_1,v_2,w_1,w_2$ belong to the same plane, which we denote by $P$. 
Moreover, since we are assuming $\lambda\neq 0$, \eqref{6.7} and \eqref{8.2} give  
\begin{equation}\label{a12}
\alpha_2=-\frac1{2\lambda}( \epsilon_1\,  v_1\w v_3+\epsilon_2\,  w_1\w w_3)\,,\qquad   \alpha_1=\frac1{2\lambda}(\epsilon_1\,  v_2\w v_3+\epsilon_2\,  w_2\w w_3)\,.
\end{equation}
Let $\{f_1,f_2,f_3,f_4\}$ be an orthonormal basis of $\r^*$ 
such that $\{f_1,f_2\}$ is a basis of $P$ and write 
\begin{equation}\label{ab}
v_1\w  v_2=a\,f_{1}\w f_2\,,\quad v_1\w  w_2+w_1 \w   v_2=2b \, f_{1}\w f_2\,.
\end{equation}
\begin{lemma}\label{NEW}
The following relation holds
$$
\epsilon_1\, a^2 + \epsilon_2\, b^2 = 0\,. 
$$
\end{lemma}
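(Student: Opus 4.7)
The plan is to substitute the expressions \eqref{a12} for $\alpha_1,\alpha_2$ into the two scalar equations in \eqref{2.2} and exploit the containment $v_1,v_2,w_1,w_2\in P$. This will yield two identities that, combined with the relation $w_1\wedge w_2=-(\epsilon_1/\epsilon_2)\,a\,f_1\wedge f_2$ coming from \eqref{6.2}, encode the singularity of a $2\times 2$ matrix whose determinant turns out to be exactly $-(\epsilon_1 a^2+\epsilon_2 b^2)$.

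Expanding $v_1\wedge\alpha_1+v_2\wedge\alpha_2=0$ via \eqref{a12}, the ``pure $v$'' and ``pure $w$'' contributions group as
$$2\lambda(v_1\wedge\alpha_1+v_2\wedge\alpha_2)=2\epsilon_1\,v_1\wedge v_2\wedge v_3+\epsilon_2\,(v_1\wedge w_2+w_1\wedge v_2)\wedge w_3,$$
so using \eqref{ab} one obtains
$$f_1\wedge f_2\wedge(\epsilon_1 a\,v_3+\epsilon_2 b\,w_3)=0.$$
An analogous expansion of $w_1\wedge\alpha_1+w_2\wedge\alpha_2=0$ gives
$$2\lambda(w_1\wedge\alpha_1+w_2\wedge\alpha_2)=\epsilon_1\,(w_1\wedge v_2+v_1\wedge w_2)\wedge v_3+2\epsilon_2\,w_1\wedge w_2\wedge w_3,$$
and substituting $w_1\wedge w_2=-(\epsilon_1/\epsilon_2)\,a\,f_1\wedge f_2$ (after dividing by $\epsilon_1\neq 0$) this collapses to
$$f_1\wedge f_2\wedge(b\,v_3-a\,w_3)=0.$$

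Now I would split $v_3=v_3^P+v_3^\perp$ and $w_3=w_3^P+w_3^\perp$ along $\mathfrak r^*=P^*\oplus (P^\perp)^*$. Since $f_1\wedge f_2\wedge u=0$ if and only if $u\in P^*$, the two identities above become the homogeneous linear system
$$\epsilon_1 a\,v_3^\perp+\epsilon_2 b\,w_3^\perp=0,\qquad b\,v_3^\perp-a\,w_3^\perp=0$$
in $(v_3^\perp,w_3^\perp)\in (P^\perp)^*\oplus (P^\perp)^*$, whose $2\times 2$ coefficient matrix has determinant $-(\epsilon_1 a^2+\epsilon_2 b^2)$. Hence it suffices to show that $(v_3^\perp,w_3^\perp)\neq(0,0)$. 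If both were zero, then $v_3,w_3\in P$, and the expressions \eqref{a12} would force $\alpha_1,\alpha_2\in\Lambda^2 P^*=\mathbb{R}\,f_1\wedge f_2$ — a $1$-dimensional space — making $\alpha_1$ and $\alpha_2$ linearly dependent and contradicting the standing hypothesis $\dim(\mathfrak n')=2$.

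The only genuine obstacle is the sign bookkeeping in the two wedge-product expansions: one has to rearrange terms carefully so that the symmetric combination $v_1\wedge w_2+w_1\wedge v_2$ defining $2b$ emerges naturally after substituting \eqref{a12}. Once those two identities are in place, the dimension argument on $\mathfrak n'$ together with a $2\times 2$ determinant computation closes the proof.
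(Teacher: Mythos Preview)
Your proof is correct and follows essentially the same approach as the paper: substitute \eqref{a12} into \eqref{2.2}, use \eqref{ab} and \eqref{6.2} to reduce to $f_1\wedge f_2\wedge(\epsilon_1 a\,v_3+\epsilon_2 b\,w_3)=0$ and $f_1\wedge f_2\wedge(b\,v_3-a\,w_3)=0$, then project onto $P^\perp$ and argue that the resulting $2\times 2$ system must be singular since $(v_3^\perp,w_3^\perp)\neq(0,0)$ (otherwise $\alpha_1,\alpha_2$ would be proportional). The paper carries out the same computation with the explicit coordinates $v_3=v_3'+xf_3+x'f_4$, $w_3=w_3'+yf_3+y'f_4$, but the logic is identical.
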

\begin{proof}
Equations  \eqref{2.2} and \eqref{a12} imply the two relations
\begin{eqnarray}
&\label{primo}2\epsilon_1\, v_1\w  v_2\w v_3+\epsilon_2w_3\w (v_1\w  w_2+w_1 \w   v_2)=0\,,\\
&\label{secondo}2\epsilon _2\, w_1\w  w_2\w w_3+\epsilon_1v_3\w (v_1\w  w_2+w_1 \w   v_2)=0\,.
\end{eqnarray}
Using \eqref{6.2}, equation \eqref{secondo} becomes
\begin{equation}\label{terzo}
2\, v_1\w  v_2\w w_3-v_3\w (v_1\w  w_2+w_1 \w   v_2)=0\,.
\end{equation}
We can rewrite \eqref{primo} and \eqref{terzo} as  
\begin{eqnarray}
&\label{17}(\epsilon_1\,a\, v_3 + \epsilon_2\,b\,w_3)\w f_1\w  f_2 =0,\\
&\label{18} (a\,w_3-b\,v_3)\w f_1\w  f_2=0\,.
\end{eqnarray}
Moreover by setting 
\begin{equation}\label{vw}
    \begin{aligned}
v_3=v_3'+xf_3+x'f_4\,,\\
w_3=w_3'+yf_3+y'f_4\,,
\end{aligned}
\end{equation}
with $v_3',w_3'\in P$ and $x,x',y,y'\in\R$, system \eqref{17}--\eqref{18} becomes 
\begin{equation}\label{vw1}
\begin{cases}
\epsilon_1\,a\,x+\epsilon_2\,b\,y=0\\
b\,x-a\,y=0\\
\epsilon_1\,a\,x'+\epsilon_2\,b\,y'=0\\
b\,x'-a\,y'=0
\end{cases}
\end{equation}
which implies 
$$
x=y=x'=y'=0\quad \mbox{ or }\quad \epsilon_1a^2+\epsilon_2b^2=0\,.
$$
If $x=y=x'=y'=0$, then both $v_3$ and $w_3$ belong to $P$, so by \eqref{a12} the structure 2-forms $\alpha_1$ and $\alpha_2$ are proportional, 
which contradicts $\dim(\n')=2$.  
Hence, $\epsilon_1a^2+\epsilon_2b^2=0$, as claimed. 
\end{proof}

\begin{lemma}\label{NEW2}
The forms $v_1\w v_2$ and $v_1\w  w_2+w_1 \w v_2$ are nonzero, namely $ab\neq0$ in Equation \eqref{ab}.
\end{lemma}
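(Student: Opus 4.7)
The plan is to argue by contradiction: suppose $a=0$. Then Lemma \ref{NEW} forces $b=0$ (since $\epsilon_2\ne 0$), and equation \eqref{6.2} gives $w_1\w w_2=0$ as well. Hence all three $2$-forms $v_1\w v_2$, $w_1\w w_2$ and $v_1\w w_2+w_1\w v_2$ vanish, so the pairs $\{v_1,v_2\}$ and $\{w_1,w_2\}$ are each linearly dependent in the $2$-plane $P$. Setting $V\coloneqq\mathrm{span}(v_1,v_2,w_1,w_2)\subseteq P$, I would perform a case analysis on $\dim V\in\{0,1,2\}$, in each case aiming at a contradiction with the linear independence of $\alpha_1,\alpha_2$ (which holds because $\dim(\n')=2$).

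The cases $\dim V=0$ and $\dim V=2$ are relatively short. If $\dim V=0$, then all $v_i,w_i$ with $i=1,2$ vanish, and equations \eqref{6.7}--\eqref{8.2} together with $\lambda\ne 0$ immediately give $\alpha_1=\alpha_2=0$. If $\dim V=2$, write $v_i=p_i\ell_v$, $w_i=q_i\ell_w$ with $\ell_v,\ell_w$ independent; the vanishing of $v_1\w w_2+w_1\w v_2$ reduces to $p_1q_2=p_2q_1$. When all four scalars are nonzero, a common ratio $c$ emerges with $v_2=cv_1$, $w_2=cw_1$, so \eqref{6.7}--\eqref{8.2} force $\alpha_1=-c\alpha_2$. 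If some scalar vanishes, the constraint $p_1q_2=p_2q_1$ makes one of the pairs $(v_1,v_2)$ or $(w_1,w_2)$ entirely vanish, and then \eqref{a12} directly yields a linear relation between $\alpha_1$ and $\alpha_2$.

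The delicate case, and the main obstacle, is $\dim V=1$. Using the $\mathrm{SU}(2)$-action on $\r$ (which is transitive on unit vectors and preserves $(\omega_1,\omega_2,\omega_3)$), I may assume $V=\langle e^1\rangle$ and write $v_i=p_ie^1$, $w_i=q_ie^1$. Equation \eqref{3.2} combined with the fact that $\eta\mapsto\eta\w\omega_3\colon\r^*\to\Lambda^3\r^*$ is a linear isomorphism (since $\omega_3$ is symplectic on $\r\cong\R^4$) uniquely determines $v_3=p_2e^3+p_1e^4$ and $w_3=q_2e^3+q_1e^4$. Substituting into \eqref{6.7}--\eqref{8.2} yields $2\lambda\alpha_1=Ae^{13}+Be^{14}$ and $2\lambda\alpha_2=-Be^{13}-Ce^{14}$ with $A=\epsilon_1p_2^2+\epsilon_2q_2^2$, $B=\epsilon_1p_1p_2+\epsilon_2q_1q_2$, $C=\epsilon_1p_1^2+\epsilon_2q_1^2$, whose coefficient determinant equals $B^2-AC=-\epsilon_1\epsilon_2(p_1q_2-p_2q_1)^2$. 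If this vanishes, $\alpha_1$ and $\alpha_2$ are dependent. Otherwise, equation \eqref{5.2} with $i=1,2$ reads $(\epsilon_1p_iF_0^1+\epsilon_2q_iF_0^2)\w e^1=0$, and since each $F_0^r$ is anti-self-dual by \eqref{1.2} while nonzero anti-self-dual $2$-forms on $\R^4$ are symplectic (hence cannot have a $1$-form factor), the nonvanishing of the $(p_i,q_i)$ determinant forces $F_0^1=F_0^2=0$. Finally, combining \eqref{4.2} with \eqref{lambda3} (which gives $12\lambda^2=A+C$) reduces everything to the identity $A^2-AC+C^2+3B^2=0$; positivity then requires $A=B=C=0$, hence $\lambda=0$, contradicting $\lambda\ne 0$. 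The crux of the argument is this last step: no single equation suffices, and one must orchestrate \eqref{1.2}, \eqref{4.2}, \eqref{5.2} and \eqref{lambda3} together with the non-decomposability of nonzero ASD $2$-forms on $\R^4$.
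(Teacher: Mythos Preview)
Your proof is correct and follows essentially the same approach as the paper's: both reduce to the case where $v_1,v_2,w_1,w_2$ are collinear, normalize via the $\mathrm{SU}(2)$-action to put them along $e^1$, compute $v_3,w_3$ from \eqref{3.2}, use \eqref{5.2} together with the non-decomposability of nonzero anti-self-dual forms to kill $F_0^1,F_0^2$, and then combine \eqref{4.2} with \eqref{lambda3} for a numerical contradiction. The only cosmetic differences are that the paper argues collinearity directly (rather than via your $\dim V$ trichotomy) and closes with the inequality $48\lambda^4\le 36\lambda^4$ coming from Cauchy--Schwarz on $\epsilon_1\epsilon_2(a_1b_2-a_2b_1)^2$, whereas you conclude via the positive-definiteness of $A^2-AC+C^2+3B^2$; these are equivalent endpoints of the same computation. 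One minor wording issue: in your $\dim V=2$ subcase with a vanishing scalar, what actually happens is not that an entire pair $(v_1,v_2)$ or $(w_1,w_2)$ vanishes, but rather that $v_1=w_1=0$ (or $v_2=w_2=0$), which still forces $\alpha_2=0$ (resp.\ $\alpha_1=0$) and gives the desired contradiction.
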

\begin{proof}
Assume by contradiction that $ab = 0$. Then, Lemma \ref{NEW} implies $a=b=0$. Thus
\begin{equation}\label{vw0}
    v_1\w v_2=w_1\w w_2=v_1\w  w_2+w_1 \w   v_2=0\,.
\end{equation}
Therefore, $v_1$ is collinear with $v_2$ and $w_1$ is collinear with $w_2$. 

We claim that in this case the four vectors $v_1,v_2,w_1,w_2$ are all collinear. 
Assume first that $v_1$ and $w_1$ are nonzero. Then, we can write 
$$
v_2=sv_1\,,\quad w_2=tw_1,
$$
for some $s,t\in \R$. Hence
$$
0=v_1\w  w_2+w_1 \w   v_2=(t-s) v_1\w  w_1.
$$
Since $\alpha_1$ and $\alpha_2$ are not proportional, $t\neq s$ and then $v_1$ and $w_1$ have to be collinear.  
Hence, the four vectors $v_1,v_2,w_1,w_2$ are collinear. 
The case $v_1=w_1=0$ is impossible since it would imply $\alpha_2=0$. If $v_1=0$, $w_1\ne 0$, we get $w_1\w w_2=w_1 \w   v_2=0$ from \eqref{vw0}, 
which implies that $w_2,v_2\in \langle w_1\rangle$. The case $v_1\ne 0$, $w_1= 0$ is similar. This proves the claim.

Using the $\mathrm{SU}(2)$-freedom on $\r^*$, we can choose an orthonormal basis $\{e_1,e_2,e_3,e_4\}$ of $\r^*$ in order to have 
$\omega_1 = e^{13}-e^{24},~\omega_2= -e^{14}-e^{23},~\omega_3= e^{12}+e^{34}$, and
$$
\begin{array}{lll}
v_1=a_1e_1\,, & v_2=a_2e_1\,, & v_3=a_1e_4+a_2e_3\,,\\
w_1=b_1e_1\,, & w_2=b_2e_1\,, & w_3=b_1e_4+b_2e_3\,,
\end{array}
$$
where $a_1,a_2,b_1,b_2\in\R$ with $a_1b_1\neq0\neq a_2b_2$. The expressions of $v_3$ and $w_3$ follow from Remark \ref{rem}. 

Now, the second equation of \eqref{a12} gives   
$$
\begin{aligned}
\alpha_1=&\frac{1}{2\lambda} (\epsilon_1\,  v_2 \w v_3 +\epsilon_2\,  w_2 \w w_3)=
\frac{1}{2\lambda} (\epsilon_1a_2\, e_1\w (a_1e_4+a_2e_3)  +\epsilon_2b_2\,  e_1\w (b_1e_4+b_2e_3) )\\
=&\frac{1}{2\lambda}\left((a_2^2 \epsilon_1+b_2^2 \epsilon_2)\, e_{1}\w e_3+(a_1a_2\epsilon_1+b_1b_2\epsilon_2) \, e_{1}\w e_4\right)\,,
\end{aligned}
$$
and, analogously, the first equation in \eqref{a12} implies 
$$
\alpha_2=-\frac{1}{2\lambda}\left((a_1^2 \epsilon_1+b_1^2 \epsilon_2)\, e_{1}\w e_4+(a_1a_2\epsilon_1+b_1b_2\epsilon_2) \, e_{1}\w e_3\right)\,.
$$
Hence, from \eqref{lambda3} we obtain 
$$
6\lambda=\langle \omega_1,\alpha_1\rangle+\langle \omega_2,\alpha_2\rangle=\frac{1}{2\lambda}(\epsilon_1 a_2^2+\epsilon_2 b_2^2)+\frac{1}{2\lambda}(\epsilon_1 a_1^2+\epsilon_2 b_1^2)\,,
$$
whence it follows that
\begin{equation}\label{12l}
    12\lambda^2=\epsilon_1 (a_1^2+a_2^2)+\epsilon_2 (b_1^2+b_2^2)\,.
\end{equation}

We now consider Equation \eqref{5.2}, which gives
$$e_1\w (a_1F_0^1+b_1F_0^2)=0,\qquad e_1\w (a_2F_0^1+b_2F_0^2)=0\,.$$
Since the wedge product with a non-zero vector maps the space $\Lambda^-\r^*$ of anti-self-dual forms on $\r$ injectively into $\Lambda^3\r^*$, we get $$a_1F_0^1+b_1F_0^2 = 0 = a_2F_0^1+b_2F_0^2\,.$$ 
The determinant of the matrix $\begin{pmatrix}
    a_1&a_2\\b_1&b_2
\end{pmatrix}$ is non-zero, as otherwise $\alpha_1$ and $\alpha_2$ would be proportional. 
Therefore, we obtain $F_0^1=F_0^2=0$, so \eqref{4.2} becomes
\begin{equation}\label{a123}
    |\alpha_1|^2+|\alpha_2|^2=12 \lambda^2\,.
\end{equation}
From \eqref{12l}, \eqref{a123}, and the explicit formulas of $\alpha_1$ and $\alpha_2$ above, it follows that 
$$
\begin{aligned}
12\lambda^2=|\alpha_1|^2+|\alpha_2|^2&=\frac{1}{4\lambda^2}\left((a_1^2 \epsilon_1+b_1^2 \epsilon_2)^2+(a_2^2 \epsilon_1+b_2^2 \epsilon_2)^2+2(a_1a_2\epsilon_1+b_1b_2\epsilon_2)^2\right)\\
&= \frac{1}{4\lambda^2}\left(\left((a_1^2 \epsilon_1+b_1^2 \epsilon_2)+(a_2^2 \epsilon_1+b_2^2 \epsilon_2)\right)^2-2\epsilon_1\epsilon_2(a_1b_2-a_2b_1)^2\right)\\
&= \frac{1}{4\lambda^2}\left(\left(\epsilon_1 (a_1^2+a_2^2)+\epsilon_2 (b_1^2+b_2^2)\right)^2-2\epsilon_1\epsilon_2(a_1b_2-a_2b_1)^2\right)\\
&= \frac{1}{4\lambda^2}\left(144 \lambda^4-2\epsilon_1\epsilon_2(a_1b_2-a_2b_1)^2\right)\\
&= 36\lambda^2-\frac{1}{2\lambda^2}\epsilon_1\epsilon_2(a_1b_2-a_2b_1)^2\,,
\end{aligned}
$$
thus  
\begin{equation}\label{e48}
    \epsilon_1\epsilon_2(a_1b_2-a_2b_1)^2=48\lambda^4\,.
\end{equation}
In particular, \eqref{a123} and \eqref{e48} show that $\epsilon_1\epsilon_2>0$. 
Using this and \eqref{12l}, we get 
$$|\epsilon_1\epsilon_2(a_1b_2-a_2b_1)^2|\le |\epsilon_1 (a_1^2+a_2^2)|\cdot|\epsilon_2 (b_1^2+b_2^2)|\le \frac14(\epsilon_1 (a_1^2+a_2^2)+\epsilon_2 (b_1^2+b_2^2))^2=36\lambda^4,$$ 
which contradicts \eqref{e48}. This finishes the proof.
\end{proof}

\begin{cor}
    If $\dim(\n')=2$, there are no invariant solutions to the $\G_2$-system \eqref{eq:HetG2sys} with $k=2$, $\lambda\neq0$ and definite $\langle\cdot,\cdot\rangle_{\mathfrak{k}}$. 
\end{cor}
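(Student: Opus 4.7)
The plan is to exploit the two preceding lemmas in this subsection, which already do the heavy lifting. Lemma \ref{NEW} establishes the algebraic relation $\epsilon_1 a^2 + \epsilon_2 b^2 = 0$ for every solution with $k=2$, $\lambda \neq 0$ and $\dim(\n')=2$, where $a, b \in \R$ are the coefficients defined by \eqref{ab}. Lemma \ref{NEW2} then rules out the degenerate case $ab=0$, so one knows that $a^2 > 0$ and $b^2 > 0$ simultaneously.

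Combining these two facts, the identity $\epsilon_1 a^2 = -\epsilon_2 b^2$ with strictly positive $a^2, b^2$ forces $\epsilon_1$ and $\epsilon_2$ to have strictly opposite signs, that is $\epsilon_1 \epsilon_2 < 0$. On the other hand, by construction we chose an orthogonal basis $\{t_1,t_2\}$ of $\mathfrak{k}\cong\R^2$ with $\epsilon_r = \langle t_r, t_r\rangle_{\mathfrak{k}}$. If $\langle\cdot,\cdot\rangle_{\mathfrak{k}}$ is either positive or negative definite, then $\epsilon_1$ and $\epsilon_2$ must share the same sign, whence $\epsilon_1 \epsilon_2 > 0$. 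This contradicts the conclusion of the two lemmas, so no invariant solution can exist under the stated hypotheses.

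Since the nontrivial geometric and algebraic work has already been carried out in Lemmas \ref{NEW} and \ref{NEW2}, there is no serious obstacle here; the argument is essentially a one-line sign comparison. The only subtlety worth stating explicitly is the remark that definiteness of $\langle\cdot,\cdot\rangle_{\mathfrak{k}}$ is equivalent, in the chosen orthogonal basis, to all $\epsilon_r$ having a common sign, which is precisely what is incompatible with the relation $\epsilon_1 a^2 + \epsilon_2 b^2 = 0$ once we know $ab \neq 0$.
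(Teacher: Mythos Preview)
Your proof is correct and follows essentially the same approach as the paper: both combine the relation $\epsilon_1 a^2+\epsilon_2 b^2=0$ from Lemma~\ref{NEW} with the fact $ab\neq 0$ from Lemma~\ref{NEW2} to obtain a sign contradiction with the definiteness of $\langle\cdot,\cdot\rangle_{\mathfrak{k}}$. The paper phrases it as ``definite $\Rightarrow a=b=0$, contradicting Lemma~\ref{NEW2}'', but this is the same argument reordered.
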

\begin{proof}
    If $\langle\cdot,\cdot\rangle_{\mathfrak{k}}$ is definite, Lemma \ref{NEW} implies $a=b=0$ which contradicts Lemma \ref{NEW2}.
\end{proof}

\medskip 
We now claim that having a solution to the system \eqref{1.2}--\eqref{8.2} implies 
$$
\alpha_1\w \alpha_1=\alpha_2\w \alpha_2=\alpha_1\w\alpha_2=0,
$$
so that $\n$ is isomorphic to $ \n_{5,2}\oplus \R^2$ (cf.~Appendix \ref{2stepnilclass}). 
Since $\lambda\neq0$, from the expressions of $\alpha_1$ and $\alpha_2$ given in \eqref{a12} we obtain 
\begin{eqnarray}
&&\label{alpha11}\alpha_1\w\alpha_1=\frac{\epsilon_1\epsilon_2}{2\lambda^2}\, v_2\w v_3\w  w_2\w w_3\,,\\
&&\label{alpha22}\alpha_2\w\alpha_2=\frac{\epsilon_1\epsilon_2}{2\lambda^2}\, v_1\w v_3\w  w_1\w w_3\,,\\
&&\label{alpha12}\alpha_1\w\alpha_2=\frac{\epsilon_1\epsilon_2}{4\lambda^2}\, v_3\w w_3\w  (v_2\w w_1+v_1\w w_2)\,.
\end{eqnarray}
Equation \eqref{2.2} implies then
\[
\begin{split}
0   &=  w_3\w (v_1\w\alpha_1+v_2\w \alpha_2)=\frac{ \epsilon_1}{2\lambda}w_3\w v_3\w ( v_1\w v_2- v_2\w  v_1) 
    =   \frac{ \epsilon_1}{\lambda} w_3\w v_3\w v_1\w v_2 \,,\\
0&= w_3\w (w_1\w\alpha_1+w_2\w\alpha_2) = \frac{\epsilon_1}{2\lambda} w_3 \w v_3 \w ( v_1 \w w_2 + w_1 \w v_2 ).
\end{split}
\]
By Lemma \ref{NEW2} we know that $v_1\w v_2$ and $v_1 \w w_2 + w_1 \w v_2$ are non-zero multiples of $f_1\w f_2$, so we have $w_3 \w v_3 \w f_1\w f_2=0$. 
Since $v_1$, $v_2$, $w_1$, $w_2$ belong to $P=\langle f_1,f_2\rangle$, we deduce that $\alpha_1 \w \alpha_1 = \alpha_2 \w \alpha_2 = \alpha_1\w\alpha_2=0,$ 
whence $\n\cong\n_{5,2}\oplus\R^2$. 

\medskip 
To conclude the proof, we show that there are no solutions in the only remaining case: $\n\cong \n_{5,2}\oplus\R^2$ and $\epsilon_1\epsilon_2<0$. 
We consider the decomposition of $\alpha_1$ and $\alpha_2$ into self-dual and anti-self-dual parts:
$$
\alpha_1=\alpha_1^++\alpha_1^-,\qquad \alpha_2=\alpha_2^++\alpha_2^-.
$$
Then 
$$
0=\alpha_1\wedge\alpha_1=(|\alpha_1^+|^2-|\alpha_1^-|^2)*_\r1,
$$
implies $|\alpha_1^+|^2=|\alpha_1^-|^2$. Similarly, $|\alpha_2^+|^2=|\alpha_2^-|^2$. From \eqref{lambda3} it follows that
\begin{align*}
    |\alpha_1|^2+|\alpha_2|^2&=2(|\alpha_1^+|^2+|\alpha_2^+|^2)\geq\langle\alpha_1^+,\omega_1\rangle^2+\langle\alpha_2^+,\omega_2\rangle^2=\langle\alpha_1,\omega_1\rangle^2+\langle\alpha_2,\omega_2\rangle^2\\&\geq\frac12(\langle\alpha_1,\omega_1\rangle+\langle\alpha_2,\omega_2\rangle)^2=18\lambda^2.
    \end{align*}
    Using \eqref{4.2} we then have
    \[
    \epsilon_1\,|F_0^1|^2 + \epsilon_2\,|F_0^2|^2=-12 \lambda^2+ |\alpha_1|^2+|\alpha_2|^2 \geq 6\lambda^2,
    \]
    whence we deduce that at least one of  the anti-self-dual forms
    $F_0^1$ and $F_0^2$ is non-vanishing. We assume $F_0^2\neq 0$. Taking the Hodge dual of \eqref{5.2} yields 
    $$\epsilon_1\, F_0^1(v_i)+\epsilon_2\,F_0^2(w_i)=0\,,\qquad \forall i\in\{1,2,3\}\,,$$
    showing that there exists an endomorphism $A \coloneqq -\frac{\varepsilon_1}{\varepsilon_2}(F_0^2)^{-1}\circ F_0^1 \in\mathrm{End}(\r^*)$
    such that $w_i=A(v_i)$ for all $i\in\{1,2,3\}$. Moreover, since $F_0^1$ and $F_0^2$ belong to $\Lambda^-\r^*$, 
    the endomorphism $A$ is of the form $\mu I_4+B$, with $\mu\in\R$ and $B\in\Lambda^-\r^*$.

    In view of Lemma \ref{NEW2} $v_1\wedge v_2\ne 0$, so 
    from \eqref{6.2} it follows that $A$ preserves the plane $P$ and its orthogonal complement $P^\perp$.  
    Therefore, using the fact that $w_3=A(v_3)$ together with \eqref{vw}, we obtain $A(xf_3+x'f_4)=yf_3+y'f_4$. 
    On the other hand, the second and fourth equations in the system \eqref{vw1} together with Lemma \ref{NEW2} show that the couples 
    $(x,x')$ and $(y,y')$ are proportional: $(x,x')=\frac ab(y,y')$. 
    Moreover, we must have $(y,y')\ne (0,0)$, since otherwise $v_3,w_3$ would belong to $P$ and $\alpha_1$, $\alpha_2$ would be proportional. 
    Therefore, $xf_3+x'f_4 \in P^\perp$ is an eigenvector for $A$, and thus also for $B=A-\mu I_4$. 
    Since $B$ is skew-symmetric, its only possible real eigenvalue is 0, so $B$ has non-trivial kernel. 
    However, the only element in $\Lambda^-\r^*$ with non-trivial kernel is 0. 
    This shows that $B=0$ and thus $w_i=\mu v_i$ for all $i\in\{1,2,3\}$. By \eqref{6.2} we thus get $\varepsilon_1+\mu^2\varepsilon_2=0$, so \eqref{6.7} gives $\alpha_2=0$, which is a contradiction. 

\smallskip

\noindent
This concludes the proof of Theorem \ref{main2}-(ii): there are no invariant solutions to the $\G_2$-system \eqref{eq:HetG2sys} with $k=2$ and $\lambda\ne 0$ when $\dim(\n')=2$.

\subsubsection{The case $k=3$ and $\lambda\neq 0$}\label{sect:exn52} 
In this case, we show that there is a solution when $\n\cong\n_{5,2}\oplus\R^2$. We may assume $\alpha_3=0$ and rewrite system \eqref{1}--\eqref{8} in the variables 
$v_i\coloneqq v_{i}^1$, $w_{i}\coloneqq v_{i}^2$, and $u_{i}\coloneqq v_{i}^3$,  $1\leq i \leq 3$ as follows: 
\begin{eqnarray}
&\ F_{0}^r\w\omega_i =0\,,\qquad \forall i,r\in\{1,2,3\}\,, \label{T3n'2first}\\
&\  \epsilon_1\, F_0^1\w v_i+\epsilon_2\,F_0^2\w w_i +\epsilon_3\,F_0^3\w u_i =0\,,\qquad \forall i\in\{1,2,3\}\,,\\
&\ v_1\w \alpha_1+v_2\w \alpha_2= w_1\w \alpha_1+w_2\w \alpha_2 = u_1\w \alpha_1+u_2\w \alpha_2 =0\,,\\
&\ \sum_{i=1}^3 v_i\w\omega_i = \sum_{i=1}^3 w_i\w\omega_i = \sum_{i=1}^3 u_i\w\omega_i =0\,,\\
&\ \epsilon_1\,|F_0^1|^2 + \epsilon_2\,|F_0^2|^2 + \epsilon_3\,|F_0^3|^2 =-12 \lambda^2+ |\alpha_1|^2+|\alpha_2|^2\,,\\
&\  \epsilon_1\, v_1\w v_2+\epsilon_2\, w_1\w w_2+\epsilon_3\, u_1\w u_2=0\,,\quad\\
&\  \epsilon_1\,  v_3\w v_1+\epsilon_2\,  w_3\w w_1 +\epsilon_3\,  u_3\w u_1 =2\lambda  \alpha_2\,,\\
&\   \epsilon_1\,  v_2\w v_3+\epsilon_2\,  w_2\w w_3 +\epsilon_3\,  u_2\w u_3 =2\lambda  \alpha_1\,. \label{T3n'2last}
\end{eqnarray}

    According to Proposition \ref{prop:exconstr} and Remark \ref{rem}, it is sufficient to provide a suitable solution to equations 
    \eqref{lambda3} and \eqref{T3n'2first}-\eqref{T3n'2last} on the vector space 
    $\R^4 = \langle e_1,e_2,e_3,e_4\rangle$ endowed with the standard SU(2)-structure 
    $(\omega_1 = e^{13}-e^{24},~\omega_2= -e^{14}-e^{23},~\omega_3= e^{12}+e^{34})$. 
    One such solution is the following:
    \[
    v_1= e^1,~ v_2= 0,~ v_3 = e^4,\qquad w_1 = 0,~ w_2 = e^1,~ w_3 = e^3,\qquad u_1=u_2=u_3=0, 
    \]
    \[
    \epsilon_1 = 12\lambda^2-t,\quad \epsilon_2 = t,\quad \epsilon_3 = 12\lambda^2-3t + \frac{1}{4\lambda^2}\,t^2,\quad (t\in\R),
    \]
    \[
    F^1_0= F^2_0 = 0,\qquad F^3_0 = e^{13} + e^{24}.
    \]
    On the $7$-dimensional vector space $\n = \langle e_1,e_2,e_3,e_4\rangle\oplus\langle z_1,z_2,z_3\rangle$ we then have
    \[
    \alpha_1 = \frac{t}{2\lambda}\, e^{13},\quad \alpha_2 = \frac{t-12\lambda^2}{2\lambda}\,e^{14},
    \]
    and
    \[
    F^1 = e^{1}\w z^1 + e^{4} \w z^3 ,\quad F^2 = e^{1}\w z^2 + e^{3}\w z^3, \quad F^3 = e^{13} + e^{24}.
    \]
    We note that the 2-forms $F^1,F^2,F^3$ are integral with respect to the considered basis of $\n$. 
    Since we also want the $2$-forms $\alpha_1$ and $\alpha_2$ to be integral with respect to the same basis, 
    we let $t=2\lambda$, with $\lambda\in\Z\smallsetminus\{0\}$. 
    The 2-step nilpotent Lie algebra $\n$ has then structure equations 
    \[
    (de^1,de^2,de^3,de^4,dz^1,dz^2,dz^3) = (0,0,0,0,e^{13},(1-6\lambda)\,e^{14},0), 
    \]
    $2$-dimensional derived algebra $\n'$, and it is easily seen to be isomorphic to $\n_{5,2}\oplus\R^2$. 
    We also note that the choice $t=2\lambda$ implies
    \[
    \epsilon_1 = 12\lambda^2-2\lambda,\quad \epsilon_2 = 2\lambda,\quad \epsilon_3 = 12\lambda^2-6\lambda + 1,
    \]
    so that the possible bilinear forms $\langle\cdot,\cdot\rangle_{\mathfrak{k}}$ on $\mathfrak{k} = \mathrm{Lie}(\mathbb{T}^3)$ corresponding to this solution have signature $(k_+,k_-)=(3,0)$ when $\lambda\geq1$ 
    and $(k_+,k_-)=(2,1)$ when $\lambda\leq -1$. 

\medskip 

\subsection{Proof of  Theorem \ref{main2}-(iii)} Finally we assume $\dim(\n')=3$ and that $\varphi$ calibrates $\n'$. 

\smallskip

By Remark \ref{rem:k1}, there are no invariant solutions with $\lambda\neq 0$ and $k=1$, so we directly focus on the case $k=2$ and $\lambda\neq 0$. 
In this case, the system \eqref{1}--\eqref{8} can be written in the variables $v_i\coloneqq v_{i}^1$ and $w_i\coloneqq v_i^2$ as follows
 \begin{eqnarray}
&\label{71} F_{0}^r\w\omega_i =0\,,\qquad \forall i\in\{1,2,3\},\ \forall r\in\{1,2\}\,,\\
&  \epsilon_1\, F_0^1\w v_i+\epsilon_2\,F_0^2\w w_i=0\,,\qquad \forall i\in\{1,2,3\}\\
&\label{72} v_1\w \alpha_1+v_2\w \alpha_2 + v_3\w\alpha_3= w_1\w \alpha_1+w_2\w \alpha_2 + w_3\w\alpha_3=0\,,\\
&  v_1\w\omega_1+v_2\w\omega_2 +v_3\w\omega_3 = w_1\w\omega_1+w_2\w\omega_2 +w_3\w\omega_3 =0\,,\\
& \epsilon_1\,|F_0^1|^2 + \epsilon_2\,|F_0^2|^2=-12 \lambda^2+ |\alpha_1|^2+|\alpha_2|^2+|\alpha_3|^2\,,\\
&\label{76} \epsilon_1\,  v_1\w v_2+\epsilon_2\, w_1\w w_2=2\lambda\alpha_3\,,\\
&\label{77}  \epsilon_1\,  v_3\w v_1+\epsilon_2\,  w_3\w w_1=2\lambda  \alpha_2\,,\\
&\label{78}  \epsilon_1\,  v_2\w v_3+\epsilon_2\,  w_2\w w_3=2\lambda  \alpha_1\,.
\end{eqnarray}

We first prove that the existence of a solution of the system with $\lambda\neq0$ forces the $7$-dimensional Lie algebra $\n$ to be isomorphic either 
to $\n_{6,3}\oplus\R$ or to $\n_{7,3,A}$. Then, we provide an example for the first possibility and show that the second one cannot occur. 

\smallskip 
Since we assume $\lambda\neq0$, we can write from \eqref{76}-\eqref{78}:
$$
\alpha_1=\frac{1}{2\lambda}(\epsilon_1\,  v_2\w v_3+\epsilon_2\,  w_2\w w_3),~
\alpha_2=\frac{1}{2\lambda}(\epsilon_1\,  v_3\w v_1+\epsilon_2\,  w_3\w w_1),~
\alpha_3=\frac{1}{2\lambda}(\epsilon_1\,  v_1\w v_2+\epsilon_2\, w_1\w w_2).
$$
Using these expressions in \eqref{72}, we obtain  
\begin{eqnarray}
3\epsilon_1\,  v_1\w v_2\w v_3+\epsilon_2\,  v_1\w w_2\w w_3+\epsilon_2\,  v_2\w w_3\w w_1+\epsilon_2\, v_3\w w_1\w w_2&=&0, \label{I}\\
3\epsilon_2\, w_1\w w_2\w w_3+\epsilon_1\,  w_1\w v_2\w v_3+\epsilon_1\,  w_2\w v_3\w v_1 + \epsilon_1\, w_3\w v_1\w v_2&=&0. \label{II}
\end{eqnarray}

Next we show that $\alpha_i\w\alpha_i=0$ for $1\leq i \leq 3$. Assume by contradiction that 
    \[
    \alpha_3\w\alpha_3 = \frac{\epsilon_1\epsilon_2}{2\lambda^2} v_1  \w w_1 \w w_2 \w v_2 \neq0.
    \]
    Then $\{v_1,v_2,w_1,w_2\}$ is a basis of $\r^*$, and we can write
    \[
    v_3 = a_1 v_1+a_2v_2+a_3w_1+a_4w_2,\quad w_3 = b_1 v_1+b_2v_2+b_3w_1+b_4w_2,
    \]
    for some real numbers $a_k,b_k$, $1\leq k\leq 4$. Substituting these expressions into \eqref{I} and \eqref{II}, we obtain  
    \[
    \begin{split}
    0   &=  (3a_3\epsilon_1 - b_1\epsilon_2)\, v_1\w v_2\w w_1 + (3a_4\epsilon_1 - b_2\epsilon_2)\, v_1\w v_2\w w_2\\
        &\quad    + (b_3-a_1 )\epsilon_2\, v_1\w w_2\w w_1 + (b_4-a_2)\epsilon_2\,v_2\w w_2\w w_1,\\
    0   &=  (3b_1\epsilon_2-a_3\epsilon_1 )\,w_1\w w_2\w v_1 + (3b_2\epsilon_2-a_4\epsilon_1 )\,w_1\w w_2\w v_2\\
        &\quad    +(a_1 - b_3)\epsilon_1\,w_1\w v_2\w v_1  + (a_2 - b_4)\epsilon_1\, w_2\w v_2 \w v_1. 
    \end{split}
    \]
    These equations give 
    \[
    b_4 = a_2,\quad b_3 = a_1,\quad b_1=b_2=a_3=a_4=0,
    \]
    and thus 
    \[
    v_3 = a_1 v_1+a_2v_2,\qquad w_3 = a_1 w_1 + a_2 w_2. 
    \]
From \eqref{77} and \eqref{78} we then get 
\[
\begin{split} 
2\lambda  \alpha_2 &=\epsilon_1\,  v_3\w v_1+\epsilon_2\,  w_3\w w_1=a_2(\epsilon_1\,  v_2\w v_1+\epsilon_2\,  w_2\w w_1)\,,\\
2\lambda  \alpha_1 &= \epsilon_1\,  v_2\w v_3+\epsilon_2\,  w_2\w w_3=a_1(\epsilon_1\,  v_2\w v_1+\epsilon_2\,  w_2\w w_1)\,.
\end{split}
\]
Since $\lambda\ne 0$, this contradicts the fact that $\alpha_1$ and $\alpha_2$ are linearly independent. 
Consequently, $\alpha_3\w\alpha_3 =0$. By similar arguments, we also obtain $\alpha_2\w\alpha_2 =\alpha_1\w\alpha_1 =0$. 

Replacing the orthonormal basis $\{z_1,z_2,z_3\}$ of $\n'$ with $\left\{\frac1{\sqrt2}(z_1+z_2),\frac1{\sqrt2}(z_1-z_2),z_3\right\}$, the structure form $\alpha_1$ is replaced by $\frac1{\sqrt2}(\alpha_1+\alpha_2)$, so the same argument as before shows that $0=(\alpha_1+\alpha_2)\wedge(\alpha_1+\alpha_2)=2\alpha_1\w \alpha_2$, and similarly $0=\alpha_1\w \alpha_3=\alpha_2\w \alpha_3$. 
This shows that either $\n\cong \n_{6,3}\oplus\R$ or $\n\cong\n_{7,3,A}$.

\smallskip

To conclude the discussion, we provide an invariant solution  to the $\G_2$-system \eqref{eq:HetG2sys} with $\lambda\neq0$ and $k=2$ for $\n\cong \n_{6,3}\oplus\R$ and show that there are no such solutions when $\n\cong\n_{7,3,A}$.

\subsubsection{The case $\n\cong \n_{6,3}\oplus\R$}\label{EX5.12}
    We describe here an invariant solution to the $\G_2$-system \eqref{eq:HetG2sys} with $\lambda\neq0$ on a $2$-step nilmanifold $M=\Gamma\backslash N$
    corresponding to a Lie algebra $\n\cong\n_{6,3}\oplus\R$ and endowed with a $\mathbb{T}^2$-bundle. 
    By Proposition \ref{prop:exconstr} and Remark \ref{rem}, it is sufficient to provide a suitable solution to equations 
    \eqref{lambda3} and \eqref{71}-\eqref{78} on the vector space $\R^4=\langle e_1,e_2,e_3,e_4\rangle$ endowed with the standard 
    SU(2)-structure $(\omega_1,\omega_2,\omega_3)$. 
    We consider the following solution:
    \[
     v_1 = 2e^1,\quad v_2 = e^2,\quad v_3 = e^4, \qquad w_1 = 0,\quad w_2 = 2e^4,\quad w_3 = 2e^2,\\
    \]
    \[
    \epsilon_1 =2\lambda^2, \quad \epsilon_2 = \frac32\,\lambda^2,\qquad F^1_0 = F^2_0 = 0, 
    \]
    so that on the $7$-dimensional vector space $\n  = \langle e_1,e_2,e_3,e_4\rangle\oplus\langle z_1,z_2,z_3\rangle$ we have
    \[
    \alpha_1 = -2\lambda\,e^{24},\quad \alpha_2 = -2\lambda\,e^{14},\quad \alpha_3 = 2\lambda\,e^{12},
    \]
    \[
    F^1 = 2e^{1} \w z^1 + e^{2} \w z^2 +e^{4} \w z^3,\qquad F^2 = 2e^{4} \w z^2 + 2e^{2} \w z^3. 
    \]
    
    Now, $\n$ is a $7$-dimensional 2-step nilpotent Lie algebra with structure equations 
    \[
    (de^1,de^2,de^3,de^4,dz^1,dz^2,dz^3) = (0,0,0,0,-2\lambda\,e^{24}, -2\lambda\,e^{14}, 2\lambda\,e^{12}),
    \]
    and $3$-dimensional derived algebra $\n'=\langle e_5,e_6,e_7\rangle$;  it is isomorphic to the decomposable Lie algebra $\n_{6,3}\oplus\R$. 
    The choice $\lambda\in\Z\smallsetminus\{0\}$ ensures that the $2$-forms $\alpha_1$, $\alpha_2$ and $\alpha_3$ are integral with respect to the basis $\{e_1,e_2,e_3,e_4\}$ of $\R^4$. 
    Since also the $1$-forms $v_i,w_i$, $1\leq i \leq3$, and the  2-forms $F_0^1$ and $F_0^2$ are integral with respect to the same basis, the existence of the solution follows from Proposition \ref{prop:exconstr}.  
    Note that the bilinear form $\langle\cdot,\cdot\rangle_{\mathfrak{k}}$ on $\mathfrak{k}=\mathrm{Lie}(\mathbb{T}^2)$ 
    has signature $(2,0)$.

\subsubsection{The case $\n\cong\n_{7,3,A}$}

We finally show that there are no solutions when $\n\cong\n_{7,3,A}$, $\lambda\neq0$ and $k=2$.

\begin{prop}\label{prop:n73A}
    Every invariant solution $(\varphi,P,\theta,\langle \cdot,\cdot\rangle_{\mathfrak{k}})$ to the $\G_2$-system \eqref{eq:HetG2sys}  
    on a $2$-step nilmanifold $M=\Gamma\backslash N$,  
    where $P$ is a principal $\mathbb{T}^2$-bundle, $\n\cong\n_{7,3,A}$ and $\f$ calibrates $\n'$, has $\lambda=0$. 
\end{prop}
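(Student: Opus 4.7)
The plan is to argue by contradiction: we assume $\lambda\neq 0$ and aim to derive an inconsistency with the hypothesis $\n\cong\n_{7,3,A}$. The analysis immediately preceding the statement already establishes $\alpha_i\w\alpha_j=0$ for all $i,j\in\{1,2,3\}$, forcing $\n$ to be isomorphic either to $\n_{6,3}\oplus\R$ or to $\n_{7,3,A}$. These two possibilities are distinguished by the way the linearly independent triple $(\alpha_1,\alpha_2,\alpha_3)$ sits inside $\Lambda^2\r^*$: either the three simple $2$-forms lie in $\Lambda^2 V$ for some $3$-dimensional subspace $V\subset\r^*$ without admitting a common $1$-form factor (the case $\n\cong\n_{6,3}\oplus\R$), or they do admit such a common factor $\mu\in\r^*$, which is the case $\n\cong\n_{7,3,A}$. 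In the latter case we may write
\[
\alpha_i=\mu\w\nu_i,\qquad i=1,2,3,
\]
with $\mu\in\r^*$ of unit norm and $\nu_1,\nu_2,\nu_3\in\mu^\perp$ linearly independent (because the $\alpha_i$'s are).

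The next step will be to decompose orthogonally $v_i=c_i\mu+V_i$ and $w_i=d_i\mu+W_i$ with $V_i,W_i\in\mu^\perp$ and $c_i,d_i\in\R$. Substituting into the equations \eqref{76}--\eqref{78}, rewritten uniformly as $2\lambda\,\alpha_i=\epsilon_1 v_j\w v_k+\epsilon_2 w_j\w w_k$ for $(i,j,k)$ cyclic in $(1,2,3)$, and projecting both sides onto the two summands of the orthogonal splitting $\Lambda^2\r^*=\Lambda^2\mu^\perp\oplus(\mu\w\mu^\perp)$ will give
\begin{equation}\label{eq:planA}
\epsilon_1\,V_j\w V_k+\epsilon_2\,W_j\w W_k=0\quad\text{in }\Lambda^2\mu^\perp,
\end{equation}
\begin{equation}\label{eq:planB}
\nu_i=\tfrac{1}{2\lambda}\bigl[\epsilon_1(c_jV_k-c_kV_j)+\epsilon_2(d_jW_k-d_kW_j)\bigr].
\end{equation}
In particular, every $\nu_i$ will lie in the subspace $P\coloneqq\mathrm{span}(V_1,V_2,V_3,W_1,W_2,W_3)\subseteq\mu^\perp$.

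The endgame will be a case analysis on $d\coloneqq\dim\mathrm{span}(V_1,V_2,V_3)$, exploiting that $\mu^\perp$ is $3$-dimensional, so every non-zero $2$-form on it is simple and two non-zero simple $2$-forms are proportional if and only if their supporting $2$-planes coincide. If $d\leq 2$, we will show that \eqref{eq:planA} pins every $W_i$ into $P_V\coloneqq\mathrm{span}(V_1,V_2,V_3)$: when $d=2$, a linearly independent pair $V_1,V_2$ in $P_V$ gives $V_1\w V_2\in\Lambda^2 P_V\setminus\{0\}$, hence by \eqref{eq:planA} also $W_1\w W_2\in\Lambda^2 P_V\setminus\{0\}$, forcing $W_1,W_2\in P_V$; running the same reasoning for the remaining pairs (or, when $V_i\w V_j=0$, using $W_i\w W_j=0$) yields $W_3\in P_V$ as well; and when $d\leq 1$ all $V_i$'s and hence all $W_i$'s are pairwise parallel. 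In all subcases $\dim P\leq 2$, so the three vectors $\nu_i\in P$ cannot be linearly independent, a contradiction. If $d=3$, then $(V_1,V_2,V_3)$ is a basis of $\mu^\perp$ and the rigidity principle applied to \eqref{eq:planA} forces $\mathrm{span}(W_i,W_j)=\mathrm{span}(V_i,V_j)$ for every pair, whence $W_1\in\mathrm{span}(V_1,V_2)\cap\mathrm{span}(V_1,V_3)=\R V_1$ and similarly $W_i=\beta_i V_i$, with the constraints $\beta_i\beta_j=-\epsilon_1/\epsilon_2$ for $i\neq j$ forcing $\beta_1=\beta_2=\beta_3=\beta$ for a common $\beta$. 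Setting $A_i\coloneqq\epsilon_1 c_i+\epsilon_2\beta d_i$, \eqref{eq:planB} becomes $\nu_i=\tfrac{1}{2\lambda}(A_jV_k-A_kV_j)$, and the formal skew identity $\sum_{\mathrm{cyc}}A_i(A_jV_k-A_kV_j)=0$ provides $A_1\nu_1+A_2\nu_2+A_3\nu_3=0$; this relation is non-trivial, since $A_1=A_2=A_3=0$ would imply $\nu_i=0$ and thus $\alpha_i=0$, again contradicting the linear independence of the $\nu_i$'s.

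The main obstacle will be the rigidity step underpinning both subcases: from the $2$-form identities \eqref{eq:planA} living in the $3$-dimensional space $\Lambda^2\mu^\perp$, one must extract the tight conclusion that the $W_i$'s are either all forced into $\mathrm{span}(V_1,V_2,V_3)$ (when $d\leq 2$) or each pinned to be a scalar multiple of the corresponding $V_i$ with a single common ratio (when $d=3$). Once this is secured, the cyclic/skew-symmetric structure of \eqref{eq:planB} will automatically yield the desired linear dependence of $\nu_1,\nu_2,\nu_3$.
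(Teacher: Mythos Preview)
Your proposal is correct and follows essentially the same approach as the paper's proof. Both decompose $v_i,w_i$ along the common factor $\mu$ (the paper's $f_0$) and its orthogonal complement, obtain the identical systems \eqref{eq:planA}--\eqref{eq:planB} (these are exactly the paper's equations \eqref{76'}--\eqref{78'} and \eqref{76''}--\eqref{78''}), and conclude via the same cyclic-sum identity $\sum_i A_i\nu_i=0$ with $A_i=\epsilon_1 c_i+\epsilon_2\beta d_i$. The only difference is how the case analysis is organized: the paper splits on whether some pair $(v'_i,w'_i)$ is linearly independent (if so, everything collapses into a $2$-plane; if not, each pair is collinear and one shows the common ratio is the same for all $i$), whereas you split on $d=\dim\mathrm{span}(V_1,V_2,V_3)$ and, in the case $d=3$, deduce $W_i=\beta_i V_i$ with a common $\beta$ from $\mathrm{span}(W_i,W_j)=\mathrm{span}(V_i,V_j)$. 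These two cuts are logically equivalent and lead to the same endgame.
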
 
\begin{proof}
Assume that there exists a solution to the system \eqref{71}--\eqref{78} with $\lambda\ne 0$. Since $\n\cong\n_{7,3,A}$, there exists a basis $\{f_0,f_1,f_2,f_3\}$ of $\r^*$ such that the structure forms $\alpha_i$ are given by $\alpha_i=f_0\w f_i$ for all $i\in\{1,2,3\}$. 
We decompose 
$$v_i=a_if_0+v'_i,\qquad w_i=b_if_0+w'_i,$$
with $a_i,b_i\in\R$ and $v'_i,w'_i\in \langle f_1,f_2,f_3\rangle$. 
Then the system \eqref{76}--\eqref{78} decouples into 
\begin{eqnarray}
&\label{76'} \epsilon_1\, ( a_1 v'_2-a_2v'_1)+\epsilon_2\, (b_1 w'_2-b_2w'_1)=2\lambda f_3\,,\\
&\label{77'} \epsilon_1\, ( a_3 v'_1-a_1v'_3)+\epsilon_2\, (b_3 w'_1-b_1w'_3)=2\lambda  f_2\,,\\
&\label{78'}  \epsilon_1\, ( a_2 v'_3-a_3v'_2)+\epsilon_2\, (b_2 w'_3-b_3w'_2)=2\lambda  f_1\,,
\end{eqnarray}
and 
 \begin{eqnarray}
&\label{76''} \epsilon_1\,  v'_1\w v'_2+\epsilon_2\, w'_1\w w'_2=0\,,\\
&\label{77''}  \epsilon_1\,  v'_3\w v'_1+\epsilon_2\,  w'_3\w w'_1=0\,,\\
&\label{78''}  \epsilon_1\,  v'_2\w v'_3+\epsilon_2\,  w'_2\w w'_3=0\,.
\end{eqnarray} 

Assume that $v'_1$ and $w'_1$ are linearly independent, spanning a plane $P\subset \r^*$. Then by \eqref{76''}, $v'_2$ and $w'_2$ belong to $P$, 
and by \eqref{77''} $v'_3$ and $w'_3$ belong to $P$ as well. Then by \eqref{76'}--\eqref{78'}, $f_1,f_2,f_3$ belong to $P$, a contradiction. 
Consequently, there exists a nonzero form $x_1\in\r^*$ and real numbers $t_1, s_1\in\R$ such that $v'_1=t_1x_1$ and $w'_1=s_1x_1$. 
A similar discussion shows the existence of nonzero forms $x_2, x_3\in\r^*$ and real numbers $t_2,t_3,s_2,s_3\in\R$ 
such that $v'_i=t_ix_i$ and $w'_i=s_ix_i$ for $i\in\{2,3\}$. 

Note that $x_1,x_2,x_3$ are linearly independent, as otherwise \eqref{76'}--\eqref{78'} would again contradict the linear independence of $f_1,f_2,f_3$.
Substituting the new expressions of the $v_i'$'s and $w_i'$'s in \eqref{76''}--\eqref{78''}, we get 
\begin{equation}\label{titj}
    \varepsilon_1t_it_j+\varepsilon_2s_is_j=0\,,\qquad\forall i\ne j\in\{1,2,3\}\,. 
\end{equation}
We also remark that $(t_i,s_i)\ne(0,0)$ for every $i\in\{1,2,3\}$. Indeed, otherwise we would get from \eqref{76'}--\eqref{78'} that $f_1,f_2,f_3$ belong to the plane generated by $x_j,x_k$, where $\{j,k\}\coloneqq\{1,2,3\}\smallsetminus\{i\}$. 

We claim that $s_it_i\ne 0$ for every $i\in\{1,2,3\}$. 
Assume, for instance, that $s_1=0$. Then we must have $t_1\ne 0$, so from \eqref{titj} we get $t_2=t_3=0$, and thus $s_2s_3=0$, which is impossible since $(t_2,s_2)\ne(0,0)\ne(t_3,s_3)$. 
This proves our claim for $i=1$; the proof in the remaining cases is analogous.  

Dividing each equation of \eqref{titj} by $t_it_j$ immediately shows that $\frac{s_i}{t_i}=\frac{s_j}{t_j}$ for every $i,j\in\{1,2,3\}$, 
so there exists a real number $c$ such that $s_i=ct_i$, and consequently $w'_i=cv'_i$ for every $i\in\{1,2,3\}$. 

 Using this, the system \eqref{76'}--\eqref{78'} becomes
\begin{eqnarray}
&\label{763} c_1 v'_2-c_2v'_1=2\lambda f_3\,,\\
&\label{773} c_3 v'_1-c_1v'_3=2\lambda  f_2\,,\\
&\label{783} c_2 v'_3-c_3v'_2=2\lambda  f_1\,,
\end{eqnarray}
where $c_i\coloneqq \varepsilon_1a_i+c\varepsilon_2b_i$ for $i\in\{1,2,3\}$. 
Multiplying \eqref{763} by $c_3$, \eqref{773} by $c_2$, \eqref{783} by $c_1$ and summing the results, gives $0=2\lambda(c_1f_1+c_2f_2+c_3f_3)$. 
Since we assumed $\lambda\neq 0$, by the linear independence of $f_1,f_2,f_3$ we must have $c_1=c_2=c_3=0,$ which by \eqref{763}--\eqref{783} again gives $f_1=f_2=f_3=0$, a contradiction. Thus, $\lambda=0$. 
\end{proof}

\begin{rmk}
    We notice that in fact we only used the equations \eqref{76}--\eqref{78} in order to prove Proposition \ref{prop:n73A}. 
\end{rmk}

\section{Open questions}\label{open}
In this final section, we collect some open problems related to our results: 
\begin{enumerate}[$\bullet$]
    \item {\bf Theorem \ref{main2}-ii}. 
    When $\n\cong\n_{5,2}\oplus\R^2$, we prove the existence of invariant solutions to the ${\rm G}_2$-system \eqref{eq:HetG2sys} with $\lambda \neq 0$ and $k=3$ (see Subsection \ref{sect:exn52}). 
    We suspect that the existence of invariant solutions to the system when $\dim (\n')=2$, $\lambda \neq 0$ and $k=3$ might force  
    the Lie algebra $\n$ to be isomorphic to $\n_{5,2}\oplus\R^2$.
    Moreover, the solution described in Subsection \ref{sect:exn52} depends on one integer parameter which determines the signature of 
    $\langle \cdot,\cdot\rangle_{\mathfrak{k}}$: it can be $(2,1)$ or $(3,0)$. 
    The existence of invariant solutions for which $\langle \cdot,\cdot\rangle_{\mathfrak{k}}$ has signature $(1,2)$ is an open problem. 
    \item {\bf Theorem \ref{main2}-iii}. The ${\rm G}_2$-system \eqref{eq:HetG2sys} admits invariant solutions on a $7$-dimensional $2$-step nilmanifold 
    $M=\Gamma\backslash N$ with $\lambda \neq 0$, $k=2$ and $\varphi$ calibrating $\n'$ if and only if $\n\cong\n_{6,3}\oplus\R$. 
    In Subsection \ref{EX5.12} we described an invariant solution such that $\langle \cdot,\cdot\rangle_{\mathfrak{k}}$ has signature $(2,0)$.  
    The existence of invariant solutions for which $\langle \cdot,\cdot\rangle_{\mathfrak{k}}$ has signature $(1,1)$  is an open problem.  
    \item {$\mathbf{ \dim(\n')=3}$}. In this case we only considered invariant solutions with $\f$ calibrating $\n'$. 
    It might be interesting to investigate whether solutions occur only in this case or whether there exist invariant solutions such that $\n'$ 
    is not calibrated by $\f$. 
    Moreover, the existence of invariant solutions with a principal $\mathbb{T}^k$-bundle is an open problem for  $k\geq 3$. 
\end{enumerate}

\appendix

\section{A construction of principal torus bundles over $2$-step nilmanifolds}\label{sect:Tkbdl}

The aim of this section is to show how to construct a principal torus bundle over a $2$-step nilmanifold  
starting with a suitable 2-form. We first introduce the setting, and then we state and prove the result. 

\smallskip

Let $N$ be a simply connected $n$-dimensional $2$-step nilpotent Lie group with Lie algebra $\n$. 
Consider a complementary subspace $\v$ of the derived algebra $\n'\coloneqq [\n,\n]$, so that $\n = \v\oplus \n'$.
Assume that $\n$ has integer structure constants with respect to a 
basis $\mathcal B=\{\ee_1,\ldots,\ee_{n-n'},z_1,\ldots,z_{n'}\}$ 
adapted to the splitting $\n = \v \oplus \n'$, where $n'=\dim(\n')$. Then 
\begin{equation}\label{eq:latticeB}
\Gamma\coloneqq\exp\left(\mathrm{span}_\Z(6\ee_1,\ldots,6\ee_{n-n'},z_1,\ldots,z_{n'})\right)
\end{equation}
is a cocompact lattice in $N$, and $M=\Gamma\backslash N$ is a $2$-step nilmanifold. 
We shall refer to $\Gamma$ as the cocompact lattice {\em determined} by the basis $\mathcal{B}$.

\begin{rmk}\label{rem:6}
Clearly, the cocompact lattice $\Gamma$ defined in \eqref{eq:latticeB} is not the only one contained in $N$ that 
one can construct starting from the basis $\mathcal{B}$. 
It is however the simplest one for which the construction discussed in this appendix works, as we will see in the proof of Lemma \ref{lem:F2}. 
\end{rmk}

\begin{defn}\label{integralForms}
An $\R^k$-valued $p$-form $F\in \Lambda^p \n^*\otimes\R^k$ is said to be {\em integral} with respect to the basis $\mathcal{B}$ of $\n$ 
if there exists a basis $\{t_1,\ldots,t_k\}$ of $\R^k$ 
such that $F = \sum_{r=1}^kF^r\otimes t_r$, where $F^r\in\Lambda^p\n^*$, and $F^r(x_1,\ldots,x_p)\in \mathbb Z$ for every 
$x_1,\ldots,x_p\in \mathcal B$ and $1\leq r\leq k$. 
\end{defn}

The main result of this appendix is the following. 

\begin{thm}\label{Theoapp1}
Let $\n$ be a $2$-step nilpotent Lie algebra having integer structure constants with respect to a basis $\mathcal{B}$. 
Let $N$ be the simply connected nilpotent Lie group with Lie algebra $\n$, and let $\Gamma\subseteq N$ be the cocompact lattice determined by $\mathcal{B}$. 
Then, for every integral closed $2$-form $F\in \Lambda^2\n^*\otimes \R^k$ there exists a principal $\mathbb{T}^k$-bundle over $M=\Gamma\backslash N$  
admitting a connection $1$-form $\theta$ with curvature $F_\theta=F$.  
\end{thm}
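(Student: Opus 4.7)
The plan is to construct the bundle and connection explicitly by descent from a trivial bundle on $N \times \R^k$ to $M = \Gamma\backslash N$. First I would reduce to $k=1$: writing $F = \sum_{r=1}^k F^r \otimes t_r$ for a basis $\{t_1,\ldots,t_k\}$ of $\R^k$ realizing the integrality, each $F^r \in \Lambda^2\n^*$ is closed and integral on $\mathcal{B}$. If each $F^r$ is the curvature of some connection $\theta^r$ on a principal $S^1$-bundle $P^r \to M$, the fiber product $P := P^1 \times_M \cdots \times_M P^k$ is a principal $\mathbb{T}^k$-bundle, and $\theta := \sum_r \theta^r \otimes t_r$ is a connection with $F_\theta = F$. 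Hence it suffices to treat a single closed integral 2-form.

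For $k=1$, since $N$ is contractible (being simply connected and nilpotent), the closed left-invariant 2-form $F$ admits a global primitive $\alpha \in \Omega^1(N)$. One can produce $\alpha$ explicitly in global Malcev coordinates $(x^i, y^j)$ adapted to $\mathcal{B}$, where the left-invariant $1$-forms $\ee^i$ and $z^j$ are polynomials in the coordinates (with $\ee^i = dx^i$ and $z^j$ of the form $dy^j$ plus quadratic corrections in the $x^i$ coming from the Baker--Campbell--Hausdorff formula). On the trivial $\R$-bundle $N \times \R \to N$ with fiber coordinate $s$, the $1$-form $\widetilde\theta := \alpha + ds$ is a connection of curvature $F$. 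I would then define a $\Gamma$-action on $N \times \R$ by $\gamma \cdot (n, s) := (\gamma n, s + f_\gamma(n))$, where $f_\gamma \in C^\infty(N)$ is the unique primitive of $\alpha - L_\gamma^*\alpha$ (closed by left-invariance of $F$) normalized by $f_\gamma(1_N) = 0$. This action preserves $\widetilde\theta$ and $f$ satisfies the cocycle identity $f_{\gamma_1\gamma_2}(n) = f_{\gamma_1}(\gamma_2 n) + f_{\gamma_2}(n)$. Further quotienting the $\R$-fiber by the central $\Z$-action $s \mapsto s+1$ should yield a principal $S^1$-bundle $P := (N \times \R/\Z)/\Gamma$ over $M$, on which $\widetilde\theta$ descends to a connection $\theta$ of curvature $F$.

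The main obstacle is proving that the two quotients are compatible, i.e.\ that $f_\gamma(\gamma' \cdot 1_N) \in \Z$ for every $\gamma, \gamma' \in \Gamma$. Using the explicit polynomial form of $\alpha$ and the 2-step group law $(x,y)\cdot(x',y') = (x + x', \,y + y' + \tfrac12 c(x,x'))$ coming from BCH, each value $f_\gamma(\gamma')$ can be expressed as a polynomial in the coordinates of $\gamma, \gamma'$ whose coefficients are rational combinations of the components of $F$ on $\mathcal{B}$. The factor $6$ in the definition of $\Gamma$ in \eqref{eq:latticeB} is tailored to clear the denominators arising from the factor $\tfrac12$ in BCH and from the quadratic corrections in the primitives of the $z^j$, so that integrality of $F$ on $\mathcal{B}$ forces $f_\gamma(\gamma') \in \Z$. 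Once this integrality is verified, the descent goes through, producing the desired principal $\mathbb{T}^k$-bundle with connection.
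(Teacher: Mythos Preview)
Your overall strategy matches the paper's: reduce to $k=1$, choose a potential $\sigma$ for $F$ on $N$, define $f_\gamma$ by $df_\gamma=\sigma-\gamma^*\sigma$ (the paper uses the opposite sign), let $\Gamma$ act on $N\times S^1$ through the $f_\gamma$, and show that the factor $6$ in the lattice clears the denominators coming from the BCH formula so that the relevant constants are integers. The paper carries this out by splitting $F=F_1+F_2$ with $F_1\in\Lambda^2\v^*$ and $F_2\in\v^*\otimes(\n')^*$, writing down explicit polynomial potentials in the functions $u_\beta(e^X)=\beta(X)$, and computing $c_{\gamma_1\gamma_2}$ in closed form for each piece.

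There is, however, a genuine logical slip in your exposition. You assert that, with the normalization $f_\gamma(1_N)=0$, the \emph{exact} cocycle identity $f_{\gamma_1\gamma_2}(n)=f_{\gamma_1}(\gamma_2 n)+f_{\gamma_2}(n)$ holds. It does not: evaluating at $n=1_N$ gives the cocycle defect $c_{\gamma_1,\gamma_2}=f_{\gamma_1}(\gamma_2)$, which is generically nonzero. If the exact identity held, the $\Gamma$-action on $N\times\R$ would already be a group action and would commute with the $\Z$-translation on the fibre, so there would be \emph{no} obstacle in passing to $N\times(\R/\Z)$ --- contradicting your own ``main obstacle''. The correct statement is that the formula $\gamma\cdot(n,e^{2\pi i t})=(\gamma n,e^{2\pi i(t+f_\gamma(n))})$ defines a $\Gamma$-action on $N\times S^1$ \emph{if and only if} $c_{\gamma_1,\gamma_2}\in\Z$; with your normalization this is precisely your condition $f_{\gamma_1}(\gamma_2)\in\Z$. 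So the computation you propose in the last paragraph is exactly what is needed, but the preceding framing should be adjusted: drop the claim of an exact cocycle on $N\times\R$, and instead present the integrality of $c_{\gamma_1,\gamma_2}$ as the condition for the $\Gamma$-action on $N\times S^1$ to be well-defined.
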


\medskip 
It is sufficient to prove Theorem \ref{Theoapp1} for principal $S^1$-bundles, namely for $F\in \Lambda^2\n^*$. 
In the following, we shall denote a $k$-form or a vector on the Lie algebra $\n$ and the corresponding left-invariant form  or left-invariant vector field on $N$ using the same symbol. 

\smallskip
Since $N$ is contractible, we have $F=d\sigma$ for some (not necessarily left-invariant) $1$-form $\sigma$ on $N$. 
Let $\gamma \in \Gamma$. Since 
$$
d\sigma=\gamma^*(d\sigma)=d(\gamma^*(\sigma)),
$$
we have $d(\gamma^*(\sigma)-\sigma)=0$. Hence there exists a function $f_{\gamma}\in C^{\infty}(N)$ such that 
\begin{equation}\label{eq:fgamma}
df_{\gamma}=\gamma^*(\sigma)-\sigma.
\end{equation}
Moreover, we can take $f_{1_N}\equiv0$. 

For $\gamma_1,\gamma_2\in \Gamma$, we have both 
$$
(\gamma_1\gamma_2)^*(\sigma)=\sigma+df_{\gamma_1\gamma_2}
$$
and 
$$
(\gamma_1\gamma_2)^*(\sigma)=(\gamma_2)^*(\sigma+df_{\gamma_1})=\sigma+df_{\gamma_2}+d\gamma_2^*f_{\gamma_1}\,,
$$
whence it follows that  
$$
df_{\gamma_2}+d\gamma_2^*f_{\gamma_1}-df_{\gamma_1\gamma_2}=0.
$$
Consequently,  
\begin{equation}\label{eq:idfgammas}
f_{\gamma_2}+\gamma_2^*f_{\gamma_1}-f_{\gamma_1\gamma_2} = c_{\gamma_1 \gamma_2} \in \R
\end{equation}
is constant. 

In summary: for every $\gamma\in\Gamma$, the potential $\sigma\in\Omega^1(N)$ gives rise to a function $f_\gamma$ through the identity \eqref{eq:fgamma}. Moreover, the functions corresponding to different elements of $\Gamma$ are related by the identity \eqref{eq:idfgammas}. 

\medskip 
Next consider $N\times S^1$. For $\gamma\in \Gamma$ let $\tilde\gamma\colon N\times S^1\to N\times S^1$ be the map 
\begin{equation}\label{GammaAct}
\tilde\gamma(x,{\rm e}^{2\pi it})\coloneqq  \left(\gamma x,{\rm e}^{2\pi i(t-f_\gamma(x))}\right)\,.
\end{equation}

\begin{lemma}\label{lem:cint}
    If $c_{\gamma_1 \gamma_2}$ is an integer for all $\gamma_1,\gamma_2\in\Gamma$, then \eqref{GammaAct} defines an action of $\Gamma$ on $N\times S^1$. 
\end{lemma}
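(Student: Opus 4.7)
The plan is to verify the two defining axioms for a left action: that $\widetilde{1_N}$ is the identity map, and that $\widetilde{\gamma_1\gamma_2}=\tilde\gamma_1\circ\tilde\gamma_2$ for all $\gamma_1,\gamma_2\in\Gamma$. Smoothness of each $\tilde\gamma$ is automatic since $\gamma$ acts smoothly on $N$ and $f_\gamma\in C^\infty(N)$; similarly $\tilde\gamma$ is a diffeomorphism because inverting the first factor and subtracting recovers $\widetilde{\gamma^{-1}}$. So the only real content is the cocycle-type compatibility, where the integrality hypothesis on $c_{\gamma_1\gamma_2}$ is used.

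For the identity axiom, one uses the convention $f_{1_N}\equiv 0$ fixed right after \eqref{eq:fgamma}: this gives immediately $\widetilde{1_N}(x,{\rm e}^{2\pi it})=(x,{\rm e}^{2\pi it})$.

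For the composition axiom, I would compute both sides directly. On the one hand,
\[
(\tilde\gamma_1\circ\tilde\gamma_2)(x,{\rm e}^{2\pi it})=\tilde\gamma_1\bigl(\gamma_2 x,{\rm e}^{2\pi i(t-f_{\gamma_2}(x))}\bigr)=\bigl(\gamma_1\gamma_2 x,\,{\rm e}^{2\pi i(t-f_{\gamma_2}(x)-f_{\gamma_1}(\gamma_2 x))}\bigr),
\]
and on the other hand $\widetilde{\gamma_1\gamma_2}(x,{\rm e}^{2\pi it})=\bigl(\gamma_1\gamma_2 x,\,{\rm e}^{2\pi i(t-f_{\gamma_1\gamma_2}(x))}\bigr)$. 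The two agree if and only if
\[
f_{\gamma_2}(x)+f_{\gamma_1}(\gamma_2 x)-f_{\gamma_1\gamma_2}(x)\in\Z
\]
for every $x\in N$. But the identity \eqref{eq:idfgammas}, together with the fact that $(\gamma_2^*f_{\gamma_1})(x)=f_{\gamma_1}(\gamma_2 x)$, shows that the left-hand side equals the constant $c_{\gamma_1\gamma_2}$, which is an integer by hypothesis. This proves the axiom.

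The step that actually uses the hypothesis is the last one: the functional identity \eqref{eq:idfgammas} holds by construction of the $f_\gamma$'s for any potential $\sigma$, but it only descends to a genuine equality in $\R/\Z$ (hence in $S^1$) when the constants $c_{\gamma_1\gamma_2}$ are integers. I do not anticipate any genuine obstacle beyond keeping track of signs and the convention $f_{1_N}\equiv 0$.
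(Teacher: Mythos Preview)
Your proof is correct and follows essentially the same approach as the paper: both compute $\tilde\gamma_1\circ\tilde\gamma_2$ and $\widetilde{\gamma_1\gamma_2}$ directly and compare the $S^1$-factors via \eqref{eq:idfgammas}, reducing the composition axiom to $c_{\gamma_1\gamma_2}\in\Z$. You additionally verify the identity axiom using $f_{1_N}\equiv 0$, which the paper leaves implicit.
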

\begin{proof}
Let $\gamma_1,\gamma_2\in\Gamma$. 
We have 
$$
\widetilde{\gamma_1\gamma_2} (x,{\rm e}^{2\pi it})=\left(\gamma_1\gamma_2x,{\rm e}^{2\pi i(t-f_{\gamma_1\gamma_2}(x))}\right)=
\left(\gamma_1\gamma_2x,{\rm e}^{2\pi i(t-f_{\gamma_2}(x)-f_{\gamma_1}(\gamma_2x)+c_{\gamma_1 \gamma_2})}\right)
$$   
and 
$$
\tilde\gamma_1\left(\tilde\gamma_2 (x,{\rm e}^{2\pi it})\right)  = \tilde\gamma_1\left(\gamma_2 x,{\rm e}^{2\pi i(t-f_{\gamma_2}(x))}\right)=
\left(\gamma_1\gamma_2 x,{\rm e}^{2\pi i(t-f_{\gamma_2}(x)-f_{\gamma_1}(\gamma_2x))}\right),
$$
whence it follows that
$$
\widetilde{\gamma_1\gamma_2} (x,{\rm e}^{2\pi it})=\tilde\gamma_1\left(\tilde\gamma_2 (x,{\rm e}^{2\pi it})\right) \,\,\iff   \,\,c_{\gamma_1 \gamma_2}\in \mathbb Z\,. 
$$ 
\end{proof}

\begin{prop}
If $c_{\gamma_1 \gamma_2}$ is an integer for all $\gamma_1,\gamma_2\in\Gamma$, then there exists a principal $S^1$-bundle $\Gamma\backslash (N\times S^1)\to\Gamma\backslash N$ endowed with a connection form $\theta$ with curvature $d\theta = F$. 
\end{prop}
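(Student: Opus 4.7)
The plan is to construct $P$ as the quotient $\Gamma\backslash(N\times S^1)$ via the $\Gamma$-action already obtained in Lemma \ref{lem:cint}, and then to exhibit an explicit $\Gamma$-invariant $1$-form on $N\times S^1$ whose descent to $P$ realizes the desired connection.

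First I would check that the $\Gamma$-action from Lemma \ref{lem:cint} is free and properly discontinuous. This is essentially automatic: the projection $N\times S^1 \to N$ intertwines the $\Gamma$-actions on source and target, and the action on $N$ is free and properly discontinuous since $M = \Gamma\backslash N$ is a manifold. Hence $P\coloneqq \Gamma\backslash(N\times S^1)$ is a smooth manifold, and the projection onto the first factor descends to a smooth map $\pi\colon P\to M$. Next, the rotation $S^1$-action on the second factor of $N\times S^1$ commutes with the $\Gamma$-action defined in \eqref{GammaAct} (since $\Gamma$ acts on the $S^1$-factor by translation), so it descends to a free $S^1$-action on $P$ whose orbits are exactly the fibers of $\pi$. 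This gives $\pi\colon P\to M$ the structure of a principal $S^1$-bundle.

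For the connection, on $N\times S^1$ I would set
\[
\tilde\theta \coloneqq dt + \mathrm{pr}_1^*\sigma,
\]
where $t$ is the standard coordinate on $S^1 = \R/\Z$ and $\sigma$ is the potential of $F$ fixed before \eqref{eq:fgamma}. This form is manifestly $S^1$-invariant (both summands are) and satisfies $\iota_{\partial_t}\tilde\theta = 1$, so once it descends it will automatically define a connection $1$-form. The curvature computation is then immediate: $d\tilde\theta = d\sigma = F$, which descends to $d\theta = F$ on $P$.

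The main (and really only) point requiring verification is the $\Gamma$-invariance of $\tilde\theta$. Here the two ingredients \eqref{GammaAct} and \eqref{eq:fgamma} conspire so that the sign of the shift $t\mapsto t-f_\gamma$ exactly cancels the sign in the relation $\gamma^*\sigma-\sigma = df_\gamma$: one obtains $\tilde\gamma^*\tilde\theta = (dt - df_\gamma) + (\sigma + df_\gamma) = \tilde\theta$. Everything else is a routine consequence of the standard descent-to-quotient mechanism.
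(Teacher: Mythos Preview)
Your proof is correct and follows essentially the same approach as the paper: define $\theta=dt+\sigma$ on $N\times S^1$, verify $\Gamma$-invariance via the cancellation $\tilde\gamma^*\theta=(dt-df_\gamma)+(\sigma+df_\gamma)=\theta$, and observe that the $S^1$-action commutes with the $\Gamma$-action so that the quotient is a principal bundle with connection $\theta$ and curvature $d\sigma=F$. Your version is slightly more explicit about the bundle structure (free and properly discontinuous action, the connection-form conditions $\iota_{\partial_t}\tilde\theta=1$ and $S^1$-invariance), but the argument is the same.
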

\begin{proof}
    We know that $\Gamma$ acts on $N\times S^1$ as in \eqref{GammaAct} and it is clear that this action commutes with the action of $S^1$ on itself by left multiplication. Thus, $\Gamma\backslash (N\times S^1)$ is the total space of a principal $S^1$ bundle over $\Gamma \backslash N$. 
    The 1-form $\theta = \sigma + dt$ on $N\times S^1$, with $\sigma\in \Omega^1(N)$ such that $F=d\sigma$, is $\Gamma$-invariant. Indeed, for all $\gamma\in \Gamma$
\[
\gamma^*(\theta)=\gamma^*(\sigma) + d\gamma^*(t)=\sigma +df_{\gamma} +d(t-f_{\gamma})=dt+\sigma=\theta\,. 
\]
This shows that $\theta$ defines a connection 1-form on $\Gamma\backslash (N\times S^1)$ with curvature $d\theta = d\sigma = F$. 
\end{proof}

We now show that the assumption of the previous result holds when the closed $2$-form $F$ is integral with respect to the basis 
$\mathcal{B} = \{e_1,\ldots,e_{n-n'},z_1,\ldots,z_{n'}\}$ of $\n=\v\oplus \n'$. 
This will prove Theorem \ref{Theoapp1} for $S^1$ bundles. 

We denote by $\{e^1,\ldots,e^{n-n'},z^1,\ldots,z^{n'}\}$ the dual basis of $\mathcal{B}$,
so that $\{\zz^1,\ldots,\zz^{n'}\}$ is a basis of $(\n')^*$ and $\{\ee^1,\ldots,\ee^{n-n'}\}$ is a basis of $\v^*$. Since $\n$ is $2$-step nilpotent, we have $d\zz^r\in\Lambda^2\v^*$ and $d\ee^i=0$.

\smallskip

Let us consider the decomposition $\Lambda^2\n^*\cong \Lambda^2(\n')^*\oplus \Lambda^2\v^*\oplus (\v^*\otimes (\n')^*)$. By the above remark we get $d(\Lambda^2\v^*)=0$, $d(\v^*\otimes (\n')^*)\subset \Lambda^3\v^*$ and $d(\Lambda^2(\n')^*)\subset \Lambda^2\v^*\otimes (\n')^*$. Moreover, since $dz^1,\ldots,dz^{n'}$ are linearly independent, it is easy to see that the restriction of $d$ to 
$\Lambda^2(\n')$ is injective. Consequently, every closed 2-form is contained in $\Lambda^2\v^*\oplus \v^*\otimes (\n')^*$. 
 We can then write an integral closed 2-form $F$ as follows 
\[
F = F_1 + F_2,
\]
where 
\[
F_1 = \sum_{1\leq r < s \leq n-n'} F_{rs}\ee^r\w\ee^s \in \Lambda^2\v^*,\qquad 
F_2 = \sum_{1\leq i \leq n-n'} \sum_{1\leq r \leq n'} \tilde{F}_{ir}\ee^i\w \zz^r 
\in \v^*\otimes (\n')^*, 
\]
and $F_{rs},\tilde{F}_{ir} \in \Z$.

We claim that the left-invariant 2-form $F$ has a potential $\sigma\in\Omega^1(N)$ giving rise to functions $f_\gamma$ for which the constants 
$c_{\gamma_1\gamma_2}$ given by \eqref{eq:idfgammas} are integers for all $\gamma_1,\gamma_2\in\Gamma$. 
By linearity, it is sufficient to prove this for $F_1$ and $F_2$ separately. 
This will be done in Lemma \ref{lem:F1} and Lemma \ref{lem:F2}, respectively, 
and will complete the proof of Theorem \ref{Theoapp1} for $S^1$ bundles.  

We begin with a preliminary result. 
\begin{lemma}\label{lem:falpha}
Let $\beta\in\n^*$ and define the scalar function $u_\beta\in C^\infty(N)$ via the identity 
\[
u_\beta(\mathrm{e}^X) = \beta(X),
\] 
for all $X\in \n$. 
Then $d\beta = \frac12\sum_{1\leq i,j\leq n-n'} b_{ij}\ee^i\w\ee^j = \sum_{1\leq i,j\leq n-n'} b_{ij}\ee^i\otimes\ee^j$, 
for certain $b_{ij} = -b_{ji}\in\R$, 
and 
\[
\beta = du_\beta +\frac12\sum_{1\leq i,j\leq n-n'} b_{ij}u_{\ee^i}\ee^j.
\]
In particular, if $\beta\in\v^*$, so that $d\beta=0$, then 
\[
\beta = du_\beta.
\] 
\end{lemma}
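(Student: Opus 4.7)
The plan is to verify the two claims separately. For the decomposition $d\beta=\frac12\sum_{i,j}b_{ij}\ee^i\w\ee^j$, I would split $\beta=\beta_\v+\beta_{\n'}$ according to $\n^*=\v^*\oplus(\n')^*$: since $d\ee^i=0$ for every $i$, we have $d\beta_\v=0$, while $d\beta_{\n'}$ is a real linear combination of the forms $d\zz^r$, each of which lies in $\Lambda^2\v^*$ because $\n$ is $2$-step nilpotent. Hence $d\beta\in\Lambda^2\v^*$ and it can be written as claimed for some skew-symmetric coefficients $b_{ij}\in\R$.

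For the main identity, the strategy is to compute $(du_\beta)_p(Y_p)$ at an arbitrary point $p=\mathrm{e}^X\in N$ and for any $Y\in\n$, where $Y_p\coloneqq (dL_p)_e(Y)$ denotes the value at $p$ of the left-invariant vector field associated to $Y$. Since $\n$ is $2$-step nilpotent, the Baker--Campbell--Hausdorff formula truncates to
\[
\mathrm{e}^X\cdot\mathrm{e}^{tY}=\mathrm{e}^{X+tY+\frac{t}{2}[X,Y]},\qquad t\in\R.
\]
Applying the defining relation $u_\beta(\mathrm{e}^Z)=\beta(Z)$ along this curve and differentiating at $t=0$ yields
\[
(du_\beta)_p(Y_p)=\beta(Y)+\tfrac12\beta([X,Y])=\beta_p(Y_p)-\tfrac12\, d\beta(X,Y),
\]
where in the last step I used $\beta_p(Y_p)=\beta(Y)$ (left-invariance of $\beta$) together with the standard identity $d\beta(X,Y)=-\beta([X,Y])$ for a left-invariant $1$-form evaluated on a pair of left-invariant vector fields.

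To conclude, I would rewrite $d\beta(X,Y)$ via the decomposition from the first step: the skew-symmetry of $b_{ij}$ gives $d\beta(X,Y)=\sum_{i,j}b_{ij}\,\ee^i(X)\,\ee^j(Y)$, and the definition of $u_{\ee^i}$ together with the left-invariance of $\ee^j$ identifies $\ee^i(X)=u_{\ee^i}(p)$ and $\ee^j(Y)=\ee^j_p(Y_p)$, so that
\[
d\beta(X,Y)=\Bigl(\sum_{i,j}b_{ij}\,u_{\ee^i}\,\ee^j\Bigr)_{\!p}(Y_p).
\]
Substituting this into the preceding display and using that $Y\in\n$ and $p\in N$ are arbitrary proves the claimed formula; the particular case $\beta\in\v^*$ is then immediate, since $d\beta=0$ forces all $b_{ij}$ to vanish. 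I do not foresee any genuine obstacle: the whole argument rests on the fact that BCH truncates after one bracket in the $2$-step setting, and the remainder is routine tensorial bookkeeping.
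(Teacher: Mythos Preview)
Your proof is correct and follows essentially the same approach as the paper: both arguments differentiate $u_\beta$ along the curve $t\mapsto p\,\mathrm{e}^{tY}$ using the truncated Baker--Campbell--Hausdorff formula, then identify the bracket term with $d\beta$ evaluated on $(X,Y)$. The only difference is that you spell out explicitly why $d\beta\in\Lambda^2\v^*$, which the paper leaves implicit.
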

\begin{proof}
Since $\n$ is 2-step nilpotent, for every $A,B\in \n$ the following identity holds 
$$
{\rm e}^{A}{\rm e}^{B}={\rm e}^{A+B+\tfrac12 [A,B]}\,,
$$
by the Baker-Campbell-Hausdorff formula.

Let $Y$ be a left-invariant vector field on $N$ and $p={\rm e}^Z\in N$. Then, we have 
\[
\begin{split}
Y_{p}(u_{\beta}) &= \left.\frac{d}{dt}\right|_{t=0}u_{\beta}\left(p\,{\rm e}^{tY}\right)
=\left.\frac{d}{dt}\right|_{t=0}u_{\beta}\left({\rm e}^{Z+tY+\frac12 t[Z,Y]}\right) = 
\left.\frac{d}{dt}\right|_{t=0}\, \beta\left(Z+tY+\frac12 t[Z,Y]\right) \\ 
&= \beta(Y)+\frac12 \beta([Z,Y]) =  \beta(Y)-\frac12 d\beta(Z,Y)  =  \beta(Y)-\frac12\sum_{1\leq i,j\leq n-n'} b_{ij}\ee^i(Z)\ee^j(Y)\\
&= \beta(Y)-\frac12\sum_{1\leq i,j\leq n-n'} b_{ij}u_{\ee^i}(p)\ee^j(Y) = \left(\beta-\frac12\sum_{1\leq i,j\leq n-n'} b_{ij}u_{\ee^i}\ee^j\right)_p(Y).
\end{split}
\]
The thesis then follows. 
\end{proof}

\begin{lemma}\label{lem:F1}
    The left-invariant integral closed $2$-form $F_1$ has the following potential 
    \[
    \sigma_1 = \sum_{1\leq r < s \leq n-n'} F_{rs} u_{\ee^r}\,\ee^s \in\Omega^1(N). 
    \]
    It gives rise to functions $f_\gamma$ for which $c_{\gamma_1\gamma_2}\in\Z$, for all $\gamma_1,\gamma_2\in\Gamma$.
\end{lemma}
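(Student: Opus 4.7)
\textbf{Proof plan for Lemma \ref{lem:F1}.} The strategy is: first verify that $\sigma_1$ is a potential for $F_1$; then compute $\gamma^*\sigma_1 - \sigma_1$ explicitly, read off the functions $f_\gamma$, and finally evaluate the constants $c_{\gamma_1\gamma_2}$ at the identity to check their integrality.

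For the first step, since $\ee^s \in \v^*$ is closed, and since Lemma \ref{lem:falpha} applied to $\beta=\ee^r\in \v^*$ (whose differential vanishes) gives $\ee^r = du_{\ee^r}$, we immediately obtain
\[
d\sigma_1 = \sum_{r<s} F_{rs}\, du_{\ee^r}\wedge \ee^s = \sum_{r<s} F_{rs}\, \ee^r\wedge \ee^s = F_1.
\]
For the second step, fix $\gamma=\mathrm{e}^Z \in \Gamma$ and $p=\mathrm{e}^X\in N$. The Baker--Campbell--Hausdorff formula for a $2$-step nilpotent Lie group gives $\gamma p = \mathrm{e}^{Z+X+\frac12[Z,X]}$. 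Since $[\n,\n]\subseteq \n'$ and $\ee^r$ vanishes on $\n'$, it follows that
\[
(\gamma^*u_{\ee^r})(p) = \ee^r\!\left(Z+X+\tfrac12[Z,X]\right) = \ee^r(Z) + u_{\ee^r}(p).
\]
Combined with $\gamma^*\ee^s = \ee^s$ (as $\ee^s$ is left-invariant), this gives
\[
\gamma^*\sigma_1 - \sigma_1 = \sum_{r<s} F_{rs}\, \ee^r(Z)\, \ee^s,
\]
a closed left-invariant element of $\v^*$. Applying Lemma \ref{lem:falpha} once more to this $1$-form, we may take
\[
f_\gamma \coloneqq \sum_{r<s} F_{rs}\, \ee^r(Z)\, u_{\ee^s},
\]
noting $f_{1_N}\equiv 0$ as required.

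Finally, since $c_{\gamma_1\gamma_2}$ is independent of the base point, we evaluate \eqref{eq:idfgammas} at the identity $e\in N$. As $u_{\ee^s}(e)=0$, we have $f_\gamma(e)=0$ for every $\gamma$, so writing $\gamma_j=\mathrm{e}^{Z_j}$ we obtain
\[
c_{\gamma_1\gamma_2} \;=\; f_{\gamma_1}(\gamma_2) \;=\; \sum_{r<s} F_{rs}\, \ee^r(Z_1)\, \ee^s(Z_2).
\]
Because $Z_1,Z_2\in\mathrm{span}_\Z(6\ee_1,\dots,6\ee_{n-n'},z_1,\dots,z_{n'})$ and $\ee^r,\ee^s$ annihilate the $z_k$'s, every factor $\ee^r(Z_1)$ and $\ee^s(Z_2)$ lies in $6\Z$; together with $F_{rs}\in\Z$ this yields $c_{\gamma_1\gamma_2}\in \Z$. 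The delicate point, and the one that drives the whole argument, is that the $2$-step hypothesis makes the discrepancy $\gamma^*u_{\ee^r}-u_{\ee^r}$ a constant (rather than a function involving brackets) because $\ee^r\in\v^*$ kills the BCH correction $\frac12[Z,X]\in\n'$; without this vanishing, the explicit primitive $f_\gamma$ would not be available. The factor $6$ in the lattice is inessential here but will be crucial for the companion Lemma \ref{lem:F2}.
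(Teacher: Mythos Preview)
Your proof is correct and follows essentially the same approach as the paper's: you verify $d\sigma_1=F_1$ via $\ee^r=du_{\ee^r}$, compute $\gamma^*u_{\ee^r}-u_{\ee^r}=\ee^r(Z)$ using BCH and the fact that $\ee^r$ kills $\n'$, and obtain the same $f_\gamma$ and the same final expression $c_{\gamma_1\gamma_2}=\sum_{r<s}F_{rs}\,\ee^r(Z_1)\,\ee^s(Z_2)$. Your evaluation of $c_{\gamma_1\gamma_2}$ at the identity (using $u_{\ee^s}(e)=0$) is a mild shortcut compared with the paper's direct symbolic expansion of $f_{\gamma_2}+\gamma_2^*f_{\gamma_1}-f_{\gamma_1\gamma_2}$, but the arguments are otherwise identical.
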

\begin{proof}
    By Lemma \ref{lem:falpha} we have $\ee^r=du_{\ee^r}$, so we can write
    \[
    F_1 = \sum_{1\leq r < s \leq n-n'} F_{rs}\ee^r\w\ee^s = 
    \sum_{1\leq r < s \leq n-n'} F_{rs} d(u_{\ee^r}\,\ee^s) = d\sigma_1,
    \]
    where
    \[
    \sigma_1 \coloneqq \sum_{1\leq r < s \leq n-n'} F_{rs} u_{\ee^r}\,\ee^s.
    \]
    Let us focus on the 1-form $\sigma_{rs}\coloneqq u_{\ee^r}\,\ee^s$ and let us consider $\gamma = {\rm e}^C\in\Gamma$. We claim that
    \begin{equation}\label{eq:gammaubeta}
    \gamma^*u_{\ee^r}=u_{\ee^r}+\ee^r(C).
    \end{equation}
    Indeed, for all $p={\rm e}^Y\in N$, we have
    \[
    \gamma^*u_{\ee^r}(p) = u_{\ee^r}(\gamma p) = u_{\ee^r}({\rm e}^C{\rm e}^Y) = u_{\ee^r}\left({\rm e}^{C+Y+\tfrac12[C,Y]}\right) 
    = \ee^r(C) + \ee^r(Y)
    = u_{\ee^r}(p) + \ee^r(C),
    \]
    since $\ee^r|_{\n'}=0$. 
    We now compute
    \[
    \gamma^*\sigma_{rs}= \gamma^*(u_{\ee^r}\,\ee^s)  = (\gamma^*u_{\ee^r})\,\ee^s  = 
    (u_{\ee^r}+\ee^r(C))\,\ee^s  = 
    \sigma_{rs} + \ee^r(C)\ee^s = \sigma_{rs} + \ee^r(C)du_{\ee^s}\,.
    \]
    Using this, we obtain
    \[
    \gamma^*\sigma_1-\sigma_1 = \sum_{1\leq r < s \leq n-n'} F_{rs}\ee^r(C)du_{\ee^s} = df_\gamma,
    \]
    where
    \[
    f_\gamma \coloneqq \sum_{1\leq r < s \leq n-n'} F_{rs}\ee^r(C)u_{\ee^s}\,.
    \]
    For every $\gamma_1={\rm e}^{C_1},\gamma_2={\rm e}^{C_2}\in\Gamma$, we have $\gamma_1\gamma_2={\rm e}^{C_1+C_2+\frac12[C_1,C_2]}$, 
    and we then obtain 
    \[
    \begin{split}
    c_{\gamma_1\gamma_2} &= f_{\gamma_2}+\gamma_2^*f_{\gamma_1}-f_{\gamma_1\gamma_2} = 
    \sum_{1\leq r < s \leq n-n'} F_{rs}\left(\ee^r(C_2)u_{\ee^s} + \ee^r(C_1)\gamma_2^*u_{\ee^s}-\ee^r(C_1+C_2)u_{\ee^s}\right)\\
    &= \sum_{1\leq r < s \leq n-n'} F_{rs}\ee^r(C_1)\ee^s(C_2).
    \end{split}
    \]
    Since $F_{rs}\in\Z$ and $\ee^r(C)\in\Z$ for all $C\in \mathrm{span}_\Z(6\ee_1,\ldots,6\ee_{n-n'},z_1,\ldots,z_{n'})$, the thesis follows. 
\end{proof}

We now focus on the summand $F_2\in \v^*\otimes(\n')^*$. First, we rewrite it as follows 
\[
F_2 = \sum_{1\leq i \leq n-n'} \sum_{1\leq r \leq n'} \tilde{F}_{ir}\ee^i\w \zz^r 
= \sum_{1\leq i \leq n-n'} \ee^i \w \eta^i,
\]
where 
\[
\eta^i \coloneqq \sum_{1\leq r \leq n'} \tilde{F}_{ir} \zz^r \in (\n')^*. 
\]
For $1\leq i \leq n-n'$, we let
\begin{equation}\label{eq:alphai}
\alpha^i \coloneqq d\eta^i = \frac12 \sum_{1\leq j,k\leq n-n'} c_{ijk}\ee^j\w\ee^k = \sum_{1\leq j,k\leq n-n'} c_{ijk}\ee^j\otimes\ee^k,    
\end{equation}
where the components $c_{ijk}\in\Z$ are skew symmetric in the last two indices: $c_{ijk}=-c_{ikj}$. 

\begin{rmk}\label{rem:cijk}
Since $d\zz^r = - \frac12 \sum_{1\leq j,k\leq n-n'} c^r_{jk}\ee^j\wedge\ee^k$, where $c^r_{jk}\in\Z$ are the structure constants of $\n$ 
with respect to the basis $\mathcal{B}$, we have
\[
c_{ijk} = -\sum_{r=1}^{n'} \tilde{F}_{ir}c^r_{jk}.
\]    
\end{rmk}

The closure of $F_2$ gives 
\[
0 = dF_2 = - \sum_{1\leq i \leq n-n'} \ee^i \w \alpha^i
= -\frac12 \sum_{1\leq i,j,k \leq n-n'} c_{ijk}\ee^i\w \ee^j\w\ee^k,
\]
hence it is equivalent to the cocycle condition 
\begin{equation}\label{eq:cocycle}
0 = c_{[ijk]} = c_{ijk}+c_{jki}+c_{kij}, 
\end{equation}
for all $1\leq i,j,k\leq n-n'$. 

The next lemma will be useful in the proof of the last result. 
\begin{lemma}\label{lem:gammaeta}
    Let $\gamma=e^C\in\Gamma$, then for all $1\leq i\leq n-n'$ the following identity holds
    \[
    \gamma^*u_{\eta^i} = u_{\eta^i} + \eta^i(C) -\frac12 \sum_{j,k}c_{ijk}\ee^j(C)u_{\ee^k}. 
    \]
\end{lemma}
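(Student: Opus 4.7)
The plan is to combine the Baker--Campbell--Hausdorff formula with the Maurer--Cartan equation for $\eta^i$. Since $\n$ is $2$-step nilpotent, BCH truncates to $e^C e^Y = e^{C + Y + \tfrac12[C,Y]}$, so for $p = e^Y \in N$ I would begin by writing
\[
\gamma^* u_{\eta^i}(p) = u_{\eta^i}(\gamma p) = \eta^i\!\left(C + Y + \tfrac12[C,Y]\right) = \eta^i(C) + \eta^i(Y) + \tfrac12\,\eta^i([C,Y]).
\]
The first summand is precisely the constant $\eta^i(C)$ appearing in the target identity, and the second equals $u_{\eta^i}(p)$ by definition of $u_{\eta^i}$. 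So everything reduces to expressing $\eta^i([C,Y])$ explicitly in terms of the structure components $c_{ijk}$.

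For this, I would invoke the Maurer--Cartan identity $d\eta^i(X,Y) = -\eta^i([X,Y])$, valid for left-invariant $1$-forms evaluated on left-invariant vector fields (both derivative terms in the Cartan formula vanish since $\eta^i(X)$ is then constant). From the definition of $\alpha^i$ in \eqref{eq:alphai} together with the skew-symmetry $c_{ijk}=-c_{ikj}$, one reads off that the $2$-form $\alpha^i$ evaluates on a pair of vectors as $\alpha^i(C,Y) = \sum_{j,k} c_{ijk}\,\ee^j(C)\,\ee^k(Y)$, and hence
\[
\eta^i([C,Y]) = -\alpha^i(C,Y) = -\sum_{j,k} c_{ijk}\,\ee^j(C)\,u_{\ee^k}(p),
\]
where in the last step I used $u_{\ee^k}(p) = \ee^k(Y)$, analogous to the identity proved in Lemma \ref{lem:falpha}. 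Substituting this back into the first display yields exactly the claimed formula.

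The proof is essentially an unwinding of definitions, so no serious obstacle is expected. The only delicate points are keeping track of the sign conventions—the BCH correction $\tfrac12[C,Y]$ contributes with a plus while the Maurer--Cartan relation introduces a minus—and using the symmetrization identity $\tfrac12\sum c_{ijk}\,\ee^j\wedge\ee^k = \sum c_{ijk}\,\ee^j\otimes\ee^k$ that converts the wedge representation of $\alpha^i$ into its tensor form when evaluating on pairs.
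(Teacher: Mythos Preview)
Your proposal is correct and follows essentially the same argument as the paper: apply the truncated BCH formula to write $\gamma^*u_{\eta^i}(p)=\eta^i(C)+u_{\eta^i}(p)+\tfrac12\eta^i([C,Y])$, then use $\eta^i([C,Y])=-d\eta^i(C,Y)=-\alpha^i(C,Y)$ together with the tensor form of $\alpha^i$ from \eqref{eq:alphai} to obtain the final expression.
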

\begin{proof}
Let $p=e^Y\in N$, then 
\[
\begin{split}
\gamma^*u_{\eta^i}(p)   &= u_{\eta^i}(\gamma p) = u_{\eta^i}(e^Ce^Y) = u_{\eta^i}\left(e^{C+Y+\frac12[C,Y]}\right) 
                            = \eta^i(C) + \eta^i(Y) +\frac12\eta^i([C,Y])      \\
                        &= u_{\eta^i}(p) + \eta^i(C) -\frac12 d\eta^i(C,Y) 
                            = u_{\eta^i}(p) + \eta^i(C) -\frac12 \sum_{1\leq j,k\leq n-n'} c_{ijk}\ee^j\otimes\ee^k(C,Y)\\
                        &= u_{\eta^i}(p) + \eta^i(C) -\frac12 \sum_{1\leq j,k\leq n-n'} c_{ijk}\ee^j(C) u_{\ee^k}(p). 
\end{split}
\]
\end{proof}

\begin{lemma}\label{lem:F2}
    The left-invariant integral closed $2$-form $F_2$ has the following potential \[
    \sigma_2 = \sum_{i} u_{\ee^i}\eta^i-\frac13\sum_{i,j,k} c_{ijk}u_{\ee^i}u_{\ee^j}\ee^k.
    \]
    It gives rise to functions $f_\gamma$ for which $c_{\gamma_1\gamma_2}\in\Z$, for all $\gamma_1,\gamma_2\in\Gamma$.
\end{lemma}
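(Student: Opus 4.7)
The plan is to verify two assertions in sequence: (a) that $d\sigma_2 = F_2$, and (b) that the constants $c_{\gamma_1\gamma_2}$ produced by the resulting functions $f_\gamma$ lie in $\Z$. For (a), I would differentiate $\sigma_2$ term by term using $du_{\ee^i} = \ee^i$ (which is Lemma \ref{lem:falpha} applied to the closed forms $\ee^i\in\v^*$) and $d\eta^i = \alpha^i = \tfrac12\sum c_{ijk}\,\ee^j\wedge \ee^k$. After a short rearrangement, the identity $d\sigma_2 = F_2$ reduces to a tensorial relation whose validity follows from the antisymmetry $c_{ijk}=-c_{ikj}$ combined with the cocycle relation \eqref{eq:cocycle}, namely $c_{ijk}+c_{jki}+c_{kij}=0$.

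For (b), I would first expand $\gamma^*\sigma_2-\sigma_2$ using $\gamma^*u_{\ee^i} = u_{\ee^i} + \ee^i(C)$ (Equation \eqref{eq:gammaubeta}) together with the left-invariance of $\ee^k$ and $\eta^i$. I would then propose the explicit primitive
\[
f_\gamma = \sum_i \ee^i(C)\,u_{\eta^i} - \tfrac16 \sum c_{ijk}\,u_{\ee^i}\,\ee^j(C)\,u_{\ee^k} - \tfrac13 \sum c_{ijk}\,\ee^i(C)\,\ee^j(C)\,u_{\ee^k},
\]
and verify $df_\gamma = \gamma^*\sigma_2-\sigma_2$ using $du_{\eta^i} = \eta^i - \tfrac12\sum c_{ijk}\,u_{\ee^j}\,\ee^k$ (Lemma \ref{lem:falpha} applied to $\eta^i$); the check relies once more on the same cocycle identity.

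With $f_\gamma$ in hand, I would compute $c_{\gamma_1\gamma_2} = f_{\gamma_2}+\gamma_2^*f_{\gamma_1}-f_{\gamma_1\gamma_2}$ by direct expansion, using $\ee^i(C_1C_2) = \ee^i(C_1)+\ee^i(C_2)$ (since $\ee^i|_{\n'}=0$ absorbs the Baker–Campbell–Hausdorff correction) and Lemma \ref{lem:gammaeta}. All $u_{\ee^i}(p)$-dependent terms must cancel, as $c_{\gamma_1\gamma_2}$ is constant, leaving
\[
c_{\gamma_1\gamma_2} = \sum_i \ee^i(C_1)\,\eta^i(C_2) - \tfrac16 \sum c_{ijk}\,\ee^j(C_1)\,\ee^i(C_2)\,\ee^k(C_2) - \tfrac13 \sum c_{ijk}\,\ee^i(C_1)\,\ee^j(C_1)\,\ee^k(C_2).
\]
Integrality is then immediate: $c_{ijk}\in\Z$ by Remark \ref{rem:cijk}; the definition \eqref{eq:latticeB} of $\Gamma$ gives $\ee^i(C)\in 6\Z$ and $z^r(C)\in\Z$ whenever $\gamma=\exp(C)\in\Gamma$, so $\eta^i(C)\in\Z$; hence each cubic term lies in $216\,\Z$, which absorbs both the $\tfrac16$ and $\tfrac13$ prefactors, while the first summand lies in $6\Z$.

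The main obstacle I anticipate is the repeated reindexing required in the verifications of $d\sigma_2=F_2$ and $df_\gamma=\gamma^*\sigma_2-\sigma_2$: the cocycle identity must be applied to three-index expressions whose natural symmetries do not align with the obvious sums, and the cancellation of $u_{\ee^i}(p)$-dependent terms in $c_{\gamma_1\gamma_2}$ will require a similar algebraic manipulation. The factor $6$ in the lattice \eqref{eq:latticeB} is precisely what is needed to overcome the denominators $3$ and $6$ appearing in $\sigma_2$ and $f_\gamma$, which explains the choice made in Remark \ref{rem:6}.
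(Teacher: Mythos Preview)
Your proposal is correct and follows essentially the same route as the paper: the paper verifies $d\sigma_2=F_2$ via the cocycle identity \eqref{eq:cocycle}, proposes exactly the same $f_\gamma$, checks $df_\gamma=\gamma^*\sigma_2-\sigma_2$ using Lemma~\ref{lem:falpha} applied to $\eta^i$, and arrives at precisely your closed formula for $c_{\gamma_1\gamma_2}$, with integrality following from $c_{ijk}\in\Z$ and $\ee^i(C)\in 6\Z$. Your identification of the factor $6$ in the lattice as the device that clears the denominators $\tfrac13$ and $\tfrac16$ is exactly the point of Remark~\ref{rem:6}.
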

\begin{proof}
    Using the definition of $\alpha^i$ and the cocycle condition \eqref{eq:cocycle}, we obtain
    \[
    \sum_{i}u_{\ee^i}\alpha^i = \frac12\sum_{i,j,k} c_{ijk}u_{\ee^i}\ee^j\w\ee^k = \frac12\sum_{i,j,k} (c_{jik}+c_{kji})u_{\ee^i}\ee^j\w\ee^k = \sum_{i,j,k} c_{ijk}u_{\ee^j}\ee^i\w\ee^k.
    \]
    This identity and equation \eqref{eq:alphai} imply that
    \[
    d\left(\sum_{i,j,k} c_{ijk}u_{\ee^i}u_{\ee^j}\ee^k\right) = \sum_{i,j,k} c_{ijk}(u_{\ee^j} \ee^i\w\ee^k + u_{\ee^i} \ee^j\w\ee^k ) = 3\sum_{i}u_{\ee^i}\alpha^i.
    \]
    Therefore, since $\alpha_i=d\eta^i$, we have
    \[
    d\sigma_2 = \sum_{i} \ee^i\w\eta^i + \sum_{i} u_{\ee^i} d\eta^i-\sum_{i}u_{\ee^i}\alpha^i = F_2,
    \]
    which shows the first assertion. 

\smallskip

Let $\gamma=e^C\in\Gamma$. Then, using \eqref{eq:gammaubeta} we obtain
\[
\begin{split}
    \gamma^*\sigma_2 - \sigma_2 &= \sum_{i} (u_{\ee^i}+\ee^i(C))\eta^i
                                    -\frac13\sum_{i,j,k} c_{ijk}(u_{\ee^i}+\ee^i(C))(u_{\ee^j}+\ee^j(C))\ee^k -\sigma_2\\
                    &=  \sum_{i} \ee^i(C)\eta^i-\frac13\sum_{i,j,k} c_{ijk}(u_{\ee^i}\ee^j(C)+u_{\ee^j}\ee^i(C)+\ee^i(C)\ee^j(C))\ee^k.
\end{split}
\]
We claim that $\gamma^*\sigma_2 - \sigma_2 = df_\gamma$, where 
\[
f_\gamma \coloneqq  \sum_{i} \ee^i(C)u_{\eta^i} -\frac16\sum_{i,j,k} c_{ijk}\ee^j(C)u_{\ee^i}u_{\ee^k} 
                    -\frac13\sum_{i,j,k} c_{ijk}\ee^i(C)\ee^j(C)u_{\ee^k}. 
\]
First, applying Lemma \ref{lem:falpha} to $\eta^i\in(\n')^*$ and using \eqref{eq:alphai}, we obtain
\[
du_{\eta^i} = \eta^i - \frac12 \sum_{1\leq j,k\leq n-n'} c_{ijk}u_{\ee^j}\ee^k.
\]
Using this identity, we get
\[
d\left(\sum_{i} \ee^i(C)u_{\eta^i}\right) = \sum_{i} \ee^i(C)\eta^i - \frac12 \sum_{i,j,k} c_{ijk} \ee^i(C) u_{\ee^j}\ee^k.
\]
Moreover, recalling that $du_{\ee^i} = \ee^i$ and using the cocycle condition $c_{ijk}+c_{jki}+c_{kij}=0$, we have
\[
\begin{split}
d\left(\sum_{i,j,k} c_{ijk}\ee^j(C)u_{\ee^i}u_{\ee^k} \right) &= \sum_{i,j,k} c_{ijk}\ee^j(C)(u_{\ee^k}\ee^i+u_{\ee^i}\ee^k)\\
                                                                    &= \sum_{i,j,k} c_{kji}\ee^j(C)u_{\ee^i}\ee^k 
                                                                        + \sum_{i,j,k}c_{ijk}\ee^j(C)u_{\ee^i}\ee^k\\
                                                                    &= \sum_{i,j,k} (-c_{jik}-c_{ikj})\ee^j(C)u_{\ee^i}\ee^k 
                                                                        + \sum_{i,j,k}c_{ijk}\ee^j(C)u_{\ee^i}\ee^k\\
                                                                    &= -\sum_{i,j,k} c_{ijk}\ee^i(C)u_{\ee^j}\ee^k 
                                                                        + 2\sum_{i,j,k}c_{ijk}\ee^j(C)u_{\ee^i}\ee^k.
\end{split}
\]
An easy computation using the expression of $f_\gamma$ and the last two identities proves the claim: $df_\gamma = \gamma^*\sigma_2-\sigma_2$. 

\smallskip

Now, consider $\gamma_1={\rm e}^{C_1},\gamma_2={\rm e}^{C_2}\in\Gamma$.  
Using equation \eqref{eq:gammaubeta} and Lemma \ref{lem:gammaeta}, we obtain 
\[
\begin{split}
\gamma_2^*f_{\gamma_1} &= \sum_{i} \ee^i(C_1)\gamma_2^*u_{\eta^i} -\frac16\sum_{i,j,k} c_{ijk}\ee^j(C_1)\gamma_2^*u_{\ee^i}\gamma_2^*u_{\ee^k} 
                            -\frac13\sum_{i,j,k} c_{ijk}\ee^i(C_1)\ee^j(C_1)\gamma_2^*u_{\ee^k}\\
                    &= \sum_{i} \ee^i(C_1)u_{\eta^i} + \sum_{i} \ee^i(C_1)\eta^i(C_2)-\frac12 \sum_{i,j,k} c_{ijk} \ee^i(C_1)\ee^j(C_2)u_{\ee^k}\\
                    &\quad    -\frac16\sum_{i,j,k} c_{ijk}\ee^j(C_1)(u_{\ee^i}+\ee^i(C_2))(u_{\ee^k}+\ee^k(C_2))
                        -\frac13\sum_{i,j,k} c_{ijk}\ee^i(C_1)\ee^j(C_1)(u_{\ee^k}+\ee^k(C_2))\\
                    &= \sum_{i} \ee^i(C_1)u_{\eta^i} + \sum_{i} \ee^i(C_1)\eta^i(C_2)-\frac12 \sum_{i,j,k} c_{ijk} \ee^i(C_1)\ee^j(C_2)u_{\ee^k}\\
                    &\quad    -\frac16\sum_{i,j,k} c_{ijk}\ee^j(C_1)u_{\ee^i}u_{\ee^k}
                                -\frac16\sum_{i,j,k} c_{ijk}\ee^j(C_1)\ee^k(C_2)u_{\ee^i}
                                -\frac16\sum_{i,j,k} c_{ijk}\ee^j(C_1)\ee^i(C_2)u_{\ee^k}\\
                    &\quad            -\frac16\sum_{i,j,k} c_{ijk}\ee^j(C_1)\ee^i(C_2)\ee^k(C_2)
                        -\frac13\sum_{i,j,k} c_{ijk}\ee^i(C_1)\ee^j(C_1)u_{\ee^k}
                        -\frac13\sum_{i,j,k} c_{ijk}\ee^i(C_1)\ee^j(C_1)\ee^k(C_2).
\end{split}
\]
Moreover, since $\gamma_1\gamma_2={\rm e}^{C_1+C_2+\frac12[C_1,C_2]}$ and $\ee^i|_{\n'}=0$, we obtain
\[
\begin{split}
f_{\gamma_1\gamma_2} &= \sum_{i} \ee^i(C_1+C_2)u_{\eta^i} -\frac16\sum_{i,j,k} c_{ijk}\ee^j(C_1+C_2)u_{\ee^i}u_{\ee^k} 
                           -\frac13\sum_{i,j,k} c_{ijk}\ee^i(C_1+C_2)\ee^j(C_1+C_2)u_{\ee^k}\\
                    &= \sum_{i} \ee^i(C_1)u_{\eta^i} + \sum_{i} \ee^i(C_2)u_{\eta^i}  -\frac16\sum_{i,j,k} c_{ijk}\ee^j(C_1)u_{\ee^i}u_{\ee^k} 
                         -\frac16\sum_{i,j,k} c_{ijk}\ee^j(C_2)u_{\ee^i}u_{\ee^k}\\
                    &\quad -\frac13\sum_{i,j,k} c_{ijk}\left(\ee^i(C_1)\ee^j(C_1)
                            +\ee^i(C_1)\ee^j(C_2)+\ee^i(C_2)\ee^j(C_1)+\ee^i(C_2)\ee^j(C_2)\right)u_{\ee^k}.
\end{split}
\]
Also, by definition
\[
f_{\gamma_2} = \sum_{i} \ee^i(C_2)u_{\eta^i} -\frac16\sum_{i,j,k} c_{ijk}\ee^j(C_2)u_{\ee^i}u_{\ee^k} 
                    -\frac13\sum_{i,j,k} c_{ijk}\ee^i(C_2)\ee^j(C_2)u_{\ee^k}
\]
Combining all these expressions, we finally get
\[
\begin{split}
c_{\gamma_1\gamma_2} &= f_{\gamma_2}+\gamma_2^*f_{\gamma_1}-f_{\gamma_1\gamma_2} \\
                        &= \sum_{i} \ee^i(C_1)\eta^i(C_2) -\frac16\sum_{i,j,k} c_{ijk}\ee^j(C_1)\ee^i(C_2)\ee^k(C_2)
                        -\frac13\sum_{i,j,k} c_{ijk}\ee^i(C_1)\ee^j(C_1)\ee^k(C_2)\\
                        &\quad-\frac12 \sum_{i,j,k} c_{ijk} \ee^i(C_1)\ee^j(C_2)u_{\ee^k} 
                        -\frac16\sum_{i,j,k} c_{ijk}\ee^j(C_1)\ee^k(C_2)u_{\ee^i}  -\frac16\sum_{i,j,k} c_{ijk}\ee^j(C_1)\ee^i(C_2)u_{\ee^k} \\
                         &\quad +\frac13\sum_{i,j,k} c_{ijk}\ee^i(C_1)\ee^j(C_2)u_{\ee^k}
                            +\frac13\sum_{i,j,k} c_{ijk}\ee^i(C_2)\ee^j(C_1)u_{\ee^k}\\ 
                        &= \sum_{i} \ee^i(C_1)\eta^i(C_2) -\frac16\sum_{i,j,k} c_{ijk}\ee^j(C_1)\ee^i(C_2)\ee^k(C_2)
                        -\frac13\sum_{i,j,k} c_{ijk}\ee^i(C_1)\ee^j(C_1)\ee^k(C_2)\\
                        &\quad-\frac16 \sum_{i,j,k} c_{ijk} \ee^i(C_1)\ee^j(C_2)u_{\ee^k} 
                        -\frac16\sum_{i,j,k} c_{ijk}\ee^j(C_1)\ee^k(C_2)u_{\ee^i}  +\frac16\sum_{i,j,k} c_{ijk}\ee^j(C_1)\ee^i(C_2)u_{\ee^k}\\
                        &= \sum_{i} \ee^i(C_1)\eta^i(C_2) -\frac16\sum_{i,j,k} c_{ijk}\ee^j(C_1)\ee^i(C_2)\ee^k(C_2)
                        -\frac13\sum_{i,j,k} c_{ijk}\ee^i(C_1)\ee^j(C_1)\ee^k(C_2)\\
                        &\quad-\frac16 \sum_{i,j,k} c_{ijk} \ee^i(C_1)\ee^j(C_2)u_{\ee^k} 
                        -\frac16\sum_{i,j,k} c_{kij}\ee^i(C_1)\ee^j(C_2)u_{\ee^k} -\frac16\sum_{i,j,k} c_{jki}\ee^i(C_1)\ee^j(C_2)u_{\ee^k}\\
                        &= \sum_{i} \ee^i(C_1)\eta^i(C_2) -\frac16\sum_{i,j,k} c_{ijk}\ee^j(C_1)\ee^i(C_2)\ee^k(C_2)
                        -\frac13\sum_{i,j,k} c_{ijk}\ee^i(C_1)\ee^j(C_1)\ee^k(C_2)\\
                        &\quad
                        -\frac16 \sum_{i,j,k} (c_{ijk} + c_{kij}+c_{jki} ) \ee^i(C_1)\ee^j(C_2)u_{\ee^k} \\
                        &= \sum_{i} \ee^i(C_1)\eta^i(C_2) -\frac16\sum_{i,j,k} c_{ijk}\ee^j(C_1)\ee^i(C_2)\ee^k(C_2)
                        -\frac13\sum_{i,j,k} c_{ijk}\ee^i(C_1)\ee^j(C_1)\ee^k(C_2). 
\end{split}
\]
Since $c_{ijk}\in\Z$ and $C_1,C_2\in\mathrm{span}_\Z(6\ee_1,\ldots,6\ee_{n-n'},z_1,\ldots,z_{n'})$, the thesis follows.
\end{proof}

\section{The classification of $7$-dimensional $2$-step nilpotent Lie algebras}\label{2stepnilclass} 

In this appendix, we recall the classification of real 7-dimensional 2-step nilpotent Lie algebras from \cite{Gon}. 
For each Lie algebra $\n$, the structure equations are written with respect to a basis $\{e^1,\ldots,e^7\}$ of the dual Lie algebra $\n^*$. 

The notation $\n_{n,n'}$ or $\n_{n,n',\bullet}$ means that the Lie algebra has dimension $n$ and derived algebra of dimension $n'$, 
while different capital letters in the third argument are used to distinguish non-isomorphic Lie algebras whose derived algebras have the same dimension. 
Moreover, $\mathfrak{h}_n$ denotes the Heisenberg Lie algebra of dimension $n$ (for odd $n$), and $\mathfrak{h}_3^{\C}$ denotes the real Lie algebra underlying the complex 3-dimensional Heisenberg Lie algebra.

\begin{itemize}
\item 7-dimensional 2-step nilpotent Lie algebras $\n$ with $\dim(\n')=1$:
\begin{eqnarray*}
\mathfrak{h}_3\oplus\R^4 			&=& \left(0,0,0,0,0,0,e^{12}\right),\\
\mathfrak{h}_5\oplus\R^2 			&=& \left(0,0,0,0,0,0,e^{12}+e^{34}\right),\\
\mathfrak{h}_7					&=& \left(0,0,0,0,0,0,e^{12}+e^{34}+e^{56}\right).
\end{eqnarray*}

\item  7-dimensional 2-step nilpotent Lie algebras $\n$ with $\dim(\n')=2$:
\begin{eqnarray*}
\n_{5,2}\oplus\R^2 		                       &=& \left(0,0,0,0,e^{12},e^{13},0\right),\\
\mathfrak{h}_3\oplus\mathfrak{h}_3\oplus\R 	   &=& \left(0,0,0,0,e^{12},e^{34},0\right),\\
\mathfrak{h}_3^{\C}\oplus\R 		           &=& \left(0,0,0,0,e^{13}-e^{24},e^{14}+e^{23},0\right),\\
\n_{6,2}\oplus\R		                       &=& \left(0,0,0,0,e^{12},e^{14}+e^{23},0\right),\\
\n_{7,2,A}				                       &=& \left(0,0,0,0,0,e^{12},e^{14}+e^{35}\right),\\
\n_{7,2,B}				                       &=& \left(0,0,0,0,0,e^{12}+e^{34},e^{15}+e^{23}\right).
\end{eqnarray*}

\item 7-dimensional 2-step nilpotent Lie algebras $\n$ with $\dim(\n')=3$:
\begin{eqnarray*}
\n_{6,3}\oplus\R	&=&\left(0,0,0,0,e^{12},e^{13},e^{23}\right),\\
\n_{7,3,A}		&=& \left(0,0,0,0,e^{12},e^{23},e^{24}\right),\\
\n_{7,3,B}		&=& \left(0,0,0,0,e^{12},e^{23},e^{34}\right),\\
\n_{7,3,B_1}		&=& \left(0,0,0,0,e^{12}-e^{34},e^{13}+e^{24},e^{14}\right)\\
\n_{7,3,C}		&=& \left(0,0,0,0,e^{12}+e^{34},e^{23},e^{24}\right),\\
\n_{7,3,D}		&=& \left(0,0,0,0,e^{12}+e^{34},e^{13},e^{24}\right),\\
\n_{7,3,D_1}		&=& \left(0,0,0,0,e^{12}-e^{34},e^{13}+e^{24},e^{14}-e^{23}\right). 
\end{eqnarray*}
\end{itemize}

\end{document}